\newtheorem{thm}{Theorem}[section]
\newtheorem{cor}[thm]{Corollary}
\newtheorem{prop}[thm]{Proposition}
\newtheorem{lem}[thm]{Lemma}
\theoremstyle{definition}
\newtheorem{defn}[thm]{Definition}
\newtheorem{fact}[thm]{Fact}
\theoremstyle{remark}
\newtheorem{rem}[thm]{Remark}
\let\c@equation\c@thm
\numberwithin{equation}{section}
\def\Ind{\setbox0=\hbox{$x$}\kern\wd0\hbox to 0pt{\hss$\mid$\hss} \lower.9\ht0\hbox to 0pt{\hss$\smile$\hss}\kern\wd0} 
\def\Notind{\setbox0=\hbox{$x$}\kern\wd0\hbox to 0pt{\mathchardef \nn=12854\hss$\nn$\kern1.4\wd0\hss}\hbox to 0pt{\hss$\mid$\hss}\lower.9\ht0 \hbox to 0pt{\hss$\smile$\hss}\kern\wd0} 
\def\ind{\mathop{\mathpalette\Ind{}}}
\title{Some model theory of quadratic geometries}
\thanks{The authors were supported by London Mathematical Society Research in Pairs Grant, \#42255. NR was supported by NSF grant DMS-2246992.  }
\author[C. Kestner]{Charlotte Kestner}
\address{Department of Mathematics \\
Imperial College \\ London }
\email{c.kestner@imperial.ac.uk}
\urladdr{\href{https://www.imperial.ac.uk/people/c.kestner}{https://www.imperial.ac.uk/people/c.kestner}}
\author[N. Ramsey]{Nicholas Ramsey}
\address{Department of Mathematics \\
University of Notre Dame\\
 USA}
\email{sramsey5@nd.edu}
\urladdr{\href{https://math.nd.edu/people/faculty/nicholas-ramsey/}{https://math.nd.edu/people/faculty/nicholas-ramsey/}}
\date{\today}
\begin{document}

\maketitle

\begin{abstract}
Orthogonal spaces are vector spaces together with a quadratic form whose associated bilinear form is non-degenerate.  Over fields of characteristic two, there are many quadratic forms associated to a given bilinear form and quadratic geometries are structures that encode a vector space over a field of characteristic 2 with a non-degenerate bilinear form together with a space of associated quadratic forms.  These structures over finite fields of characteristic 2 form an important part of the basic geometries that appear in the Lie coordinatizable structures of Cherlin and Hrushovski.  We (a) describe the respective model companions of the theory of orthogonal spaces and the theory of quadratic geometries and (b) classify the pseudo-finite completions of these theories. We also (c) give a neostability-theoretic classification of the model companions and these pseudo-finite completions. This is a small step towards understanding the analogue of the Cherlin-Hrushovski theory of Lie coordinatizable structures in a setting where the involved fields may be pseudo-finite.
\end{abstract}

\setcounter{tocdepth}{1}
\tableofcontents

This paper concerns the theory of pseudo-finite and generic orthogonal spaces and quadratic geometries.  An \emph{orthogonal space} is a vector space $V$ over a field $K$ together with a quadratic form $q$ such that the associated bilinear form is non-degenerate. When $K$ is of characteristic $0$ or odd characteristic, the bilinear form and the quadratic form completely determine one another; in this situation, the model theory of vector spaces with a bilinear form and the model theory of vector spaces with a quadratic form are completely equivalent. In characteristic $2$, however, there are usually many quadratic forms associated to a given bilinear form.  Indeed, there is a regular action of the dual space $V^{*}$ on the set of all such quadratic forms. A \emph{quadratic geometry}, for us, is a vector space over a field of characteristic $2$ together with a space $Q$ of quadratic forms on $V$ with a regular action of $V$ on $Q$.  These structures, over finite fields, were introduced in model theory via the work of Kantor, Liebeck, and Macpherson, where they appeared in their classification of primitive smoothly approximable $\aleph_{0}$-categorical structures.  Later, they played an important role in the Cherlin-Hrushovski theory of quasi-finite theories, where both orthogonal spaces and quadratic geometries are among the basic coordinatizing structures in the theory of Lie coordinatizable structures. 

We are interested in studying theories of orthogonal spaces and quadratic geometries without the restriction that the underlying field be finite.  A novel feature of our setting is that we are forced to deal with `extension of scalars'. This doesn't happen in the finite field setting, since the cardinality of the field, in this case, is pinned down by the theory. Over infinite fields, the field sort becomes a dynamical object, varying across the elementary class. We build on the related analysis of infinite-dimensional vector spaces over algebraically closed fields (of characteristic $\neq 2$) with symmetric or alternative bilinear forms inaugurated by Granger \cite{granger1999stability}. Following the corrections to Granger's quantifier elimination results in \cite{ChernikovHempel3} and \cite{abdaldaim2023higher}, we prove that orthogonal spaces and quadratic geometries eliminate quantifiers in a reasonable language, down to quantiifers over the field. Along the way, for both the theories of orthogonal spaces and quadratic geometries, we classify the pseudo-finite completions and axiomatize their model companions. For orthogonal spaces, we show that the model companion is exactly the theory of an infinite dimensional vector spaces over an algebraically closed field of characteristic $2$ equipped with a quadratic form whose associated bilinear form is non-degenerate. For quadratic geometries, however, the model companion depends significantly on whether or not the Witt defect is present in the language. In both cases, the model companion includes axioms asserting that the vector space is infinite dimensional and the bilinear form associated to each quadratic form is non-degenerate.  Without the Witt defect, the underlying field is algebraically closed, but, in the presence of the Witt defect, something interesting happens: the underlying field is pseudo-algebraically closed with absolute Galois group $\mathbb{Z}_{2}$. 

By analogy to the theory of smoothly approximable homogeneous structures, we prove that the orthogonal spaces and quadratic geometries we consider are homogeneous and smoothly approximable \emph{relative to the field}. When the underlying field is infinite, we prove `smooth approximability' by finite dimensional subspaces over the same field. When the underlying field is algebraically closed, infinite dimensional orthogonal and quadratic spaces will not be pseudo-finite, but our results entail that they are nontheless `pseudo-finite dimensional.' Consequently, these structures provide a stock of interesting examples to consider in relation to the not-yet-existent theory of smoothly approximable structures over infinite fields, the possibility of which was raised by Hrushovski \cite{hrushovski1993finite}.

Finally, we apply our quantifier elimination results in Section \ref{section:neostability} to give an analysis of the placement of these structures on the neostability-theoretic map. Although the presence of pseudo-finite fields (or, more generally, non-separably closed PAC fields) is incompatible with $n$-dependence for any $n$ \cite{hempel2016n}, we prove that orthogonal spaces and quadratic geometries over algebraically closed fields are NFOP$_{2}$.  The property NFOP$_{2}$ is a ternary notion of stability introduced recently by Terry and Wolf \cite{terry2022irregular}.  Our result mirrors results from \cite{abdaldaim2023higher} which established that both smoothly approximable structures as well as Granger's bilinear forms over algebraically closed fields are NFOP$_{2}$.  The proof for orthogonal geometries turns out to be especially interesting, due to the complexity of terms interrelating the three distinct sorts. We also show that both the pseudo-finite structures and the model companions are NSOP$_{1}$ by giving an explicit characterization of Kim-independence.  

\section{Orthogonal Spaces}  

\subsection{The basic theory}  The base language $L_{V,K}$ will have two sorts $V$ and $K$.  On $K$, there is the language of rings and, on $V$, there is the language of abelian groups, and there is a function $\cdot : K \times V \to V$ for scalar multiplication. Towards quantifier-elimination, we also include in $L_{V,K}$ linear independence predicates and coordinate functions.  More precisely, for each $n \geq 1$, we include in $L_{V,K}$ an $n$-ary predicate $\theta_{n}$ on $V$ and, for each $i = 1, \ldots, n$, an $n+1$-ary function $\pi_{n,i} : V^{n+1}\to K$.  The theory $T^{0}_{V,K}$ asserts that $K$ is a field and $V$ is a $K$-vector space with respect to the scalar multiplication $\cdot$. The theory $T^{0}_{V,K}$ also contains axioms asserting that $\theta_{n}(v_{1}, \ldots, v_{n})$ holds if and only if $v_{1}, \ldots, v_{n}$ are linearly independent.  Finally, there are axioms asserting that if $v_{1}, \ldots, v_{n} \in V$ are linearly independent and $w = \sum_{i = 1}^{n} \alpha_{i}v_{i}$, then $\pi_{n,i}(v_{1},\ldots, v_{n},w) = \alpha_{i}$ (and is zero otherwise). 

The language $L_{O}$ extends $L_{V,K}$ with the addition of a unary function symbol $q : V \to K$.  The theory $T^{0}_{O}$ extends $T^{0}_{V,K}$ by asserting that $q$ is a quadratic form on $V$, i.e. $q(\alpha v) = \alpha^{2} q(v)$ for all $\alpha \in K$ and $v \in V$, and the function $\beta : V^{2} \to K$ defined by $\beta(x,y) = q(x+y) - q(x) - q(y)$ is a non-degenerate symmetric $K$-bilinear map on $V$. We will also add an axiom to $T^{0}_{O}$ asserting that $K$ has characteristic $2$. Models of $T^{0}_{O}$ will be referred to as \emph{orthogonal spaces}.  

We note that the requirement that $K$ has characteristic $2$ is not necessary for the study of orthogonal spaces, but, in odd characteristic, the quadratic form is completely determined by the associated bilinear form and so the study of orthogonal spaces reduces entirely to that of bilinear forms undertaken by Granger \cite{granger1999stability}.  

Our goals are (1) to axiomatize all pseudo-finite completions of $T^{0}_{O}$ and also to axiomatize the model companion of $T^{0}_{O}$.  Since we will need to talk about the theory of the associated fields, we will write ACF$_{2}$ to denote the theory of algebraically closed fields of characteristic $2$ and PSF$_{2}$ to denote the theory of pseudo-finite fields of characteristic $2$. 

\begin{rem}
    Note that that when $K$ is finite, the linear independence predicates and coordinate functions are not needed; one can instead name the elements of $K$ with constants to obtain quantifier elimination, see \cite[Lemma 2.2.8]{cherlin2003finite}.  For the closely related two-sorted theory of infinite dimensional vector spaces over an algebraically closed field of odd characteristic with a symmetric or alternating bilinear form, quantifier elimination in a language without the coordinate functions was claimed in \cite{granger1999stability}, but an example showing that they are necessary in the case that the field is infinite appears in \cite[Section 2.2]{dobrowolskisets}.  Quantifier elimination for the two-sorted theory of infinite dimensional vector spaces with a non-degenerate alternating or symmetric bilinear form, in a language with coordinate functions, was ultimately proved independently in \cite{abdaldaim2023higher} and \cite{ChernikovHempel3}.    
\end{rem}

\subsection{Extension of scalars}
In this subsection, we explain how to extend the quadratic form as the field grows. 

\begin{defn}
    Suppose $M = (V,K,q)$ is an orthogonal space and $K'$ is a field extension of $K$.  We define the \emph{extension of scalars} of $M$ to $K'$ to be the structure $M' = (K' \otimes_{K} V, K', q')$ where the quadratic form $q'$ is defined as follows: 
    $$
q'\left( \sum_{i = 1}^{k} \alpha_{i} \otimes v_{i} \right) = \sum_{i = 1}^{k} \alpha_{i}^{2} q(v_{i}) + \sum_{i < j} \alpha_{i}\alpha_{j} \beta(v_{i},v_{j}).
    $$
\end{defn}

It is well-known that extension of scalars, as defined above, produces a quadratic form and that the bilinear form associated to $q'$ is the same as the natural extension of $\beta$ to a $K'$-bilinear map of $K' \otimes_{K} V$, which is non-degenerate if and only if $\beta$ is.  See, e.g., \cite[Ch. II, Section 7]{elman2008algebraic}. This easily yields the following: 

\begin{lem}
    Suppose $M = (V,K,q)$ is an orthogonal space and $K'$ is a field extension of $K$.  If $M' = (V\otimes_{K} K', K',q')$ is the extension of scalars of $M$ to $K'$, then $M'$ is an orthogonal space and the map $M \to M'$, which is the inclusion on the field sort and the map $v \mapsto 1 \otimes v$ on the vector space sort, is an embedding of $L_{O}$-structures. 
\end{lem}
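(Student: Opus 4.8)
The plan is to split the statement into two parts: first, that $M' = (V\otimes_K K', K', q')$ is an orthogonal space, and second, that the natural map $\iota \colon M \to M'$ (inclusion on the field sort, $v \mapsto 1 \otimes v$ on the vector sort) is an $L_O$-embedding. For the first part there is essentially nothing to do beyond invoking the cited facts: the displayed formula defining $q'$ is a well-defined quadratic form on the $K'$-vector space $V \otimes_K K'$ whose associated bilinear form $\beta'$ is the $K'$-bilinear extension of $\beta$ (see \cite[Ch.~II, Section 7]{elman2008algebraic}), and $\beta'$ is non-degenerate precisely because $\beta$ is. Since $K'$ is a field extension of $K$ it has characteristic $2$, so $(V\otimes_K K', K', q')$ is a model of $T^0_O$.

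For the second part I would fix a $K$-basis $(e_j)_{j\in J}$ of $V$; then $(1\otimes e_j)_{j\in J}$ is a $K'$-basis of $V\otimes_K K'$, and $v \mapsto 1\otimes v$ sends $\sum_j c_j e_j$ to $\sum_j c_j(1\otimes e_j)$, which is injective because $K \hookrightarrow K'$ is. On the field sort $\iota$ is the ring embedding $K \hookrightarrow K'$ given by hypothesis; on the vector sort it is additive; and for $\alpha \in K$ we have $\iota(\alpha v) = 1\otimes \alpha v = \alpha(1\otimes v) = \iota(\alpha)\iota(v)$, so scalar multiplication is respected. The key structural point is that extension of scalars preserves and reflects $K$-linear independence: $v_1,\ldots,v_n$ are linearly independent in $V$ if and only if $1\otimes v_1,\ldots,1\otimes v_n$ are linearly independent in $V\otimes_K K'$ — completing to a basis gives one direction, pulling back a dependence relation the other — and, similarly, a vector $w$ lies in the $K$-span of linearly independent $v_1,\ldots,v_n$, say $w = \sum_i \alpha_i v_i$, if and only if $1\otimes w$ lies in the $K'$-span of $1\otimes v_1,\ldots,1\otimes v_n$, necessarily with the same coefficients $\alpha_i \in K \subseteq K'$. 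Since $T^0_{V,K}$ pins down $\theta_n$ as the linear independence predicate and $\pi_{n,i}$ as the coordinate functions in every model, it follows at once that $\iota$ preserves and reflects $\theta_n$ and commutes with every $\pi_{n,i}$ (both sides returning $0$ in the degenerate cases, consistently with $\theta_n$ failing). Finally, taking $k=1$ and $\alpha_1 = 1$ in the definition of $q'$ gives $q'(1\otimes v) = q(v)$, so $\iota$ commutes with $q$; hence $\iota$ is an $L_O$-embedding.

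The verification is largely bookkeeping once the standard facts about $q'$ are granted, and I do not expect a genuine obstacle; the only point requiring a small argument is the transfer of linear independence and of coordinates under extension of scalars, which the basis argument above (equivalently, faithful flatness of $-\otimes_K K'$ over a field) handles.
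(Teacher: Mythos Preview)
Your proposal is correct. The paper gives no proof at all for this lemma: it simply remarks that extension of scalars is well known to produce a quadratic form whose associated bilinear form is the natural extension of $\beta$ (citing \cite[Ch.~II, Section 7]{elman2008algebraic}), and then says ``This easily yields the following'' before stating the lemma. Your write-up is exactly the kind of verification the paper is leaving to the reader, and in fact the one nontrivial point you isolate---that the linear independence predicates $\theta_n$ and the coordinate functions $\pi_{n,i}$ are preserved under $v\mapsto 1\otimes v$---is precisely what gets used later (see the opening of the proof of Lemma~\ref{lem:substructure}, where the paper notes that linear independence in $M$ persists in $N$ because the $\theta_n$ are in the language).
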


%

Towards axiomatizing a model companion of $T^{0}_{O}$, we note the following consequence:

\begin{cor}
    If $M = (V,K,q)$ is an existentially closed model of $T^{0}_{O}$, then $K$ is algebraically closed. 
\end{cor}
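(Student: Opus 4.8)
The plan is to use extension of scalars to manufacture roots of polynomials inside the field sort. Fix an existentially closed $M = (V,K,q) \models T^{0}_{O}$ and let $p(X) \in K[X]$ be irreducible of degree $n \geq 1$; the goal is to show $n = 1$. First I would set $K' = K[X]/(p)$, which is a field extension of $K$ containing a root $\alpha$ of $p$, and form the extension of scalars $M' = (V \otimes_{K} K', K', q')$. By the preceding lemma, $M'$ is again an orthogonal space and the canonical map $M \to M'$ (inclusion on $K$, $v \mapsto 1 \otimes v$ on $V$) is an embedding of $L_{O}$-structures. Thus $M'$ is a model of $T^{0}_{O}$ containing $M$ as an $L_{O}$-substructure.

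Next I would observe that ``$p$ has a root'' is expressible by a purely existential $L_{O}$-formula $\varphi := \exists y\, ( \sum_{i=0}^{n} c_{i} y^{i} = 0 )$, where the quantifier ranges over the field sort and the only parameters $c_{0}, \ldots, c_{n}$ are the coefficients of $p$, which lie in $K \subseteq M$. Since $\alpha \in K'$ witnesses $M' \models \varphi$ and $M$ is existentially closed in $M'$, we conclude $M \models \varphi$; that is, $p$ has a root in $K$. As $p$ is irreducible over $K$, this forces $\deg p = 1$. Since $p$ was an arbitrary irreducible polynomial over $K$, every non-constant polynomial over $K$ has a root, i.e. $K$ is algebraically closed.

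I do not expect a serious obstacle: the real content is supplied by the extension-of-scalars lemma, which guarantees that $M \hookrightarrow M'$ is an honest $L_{O}$-embedding (preserving in particular the independence predicates $\theta_{n}$, the coordinate functions $\pi_{n,i}$, and the quadratic form $q$); everything else is the trivial remark that ``$p$ has a root'' is an existential condition over $M$. The one point worth stating carefully is precisely that preservation claim, but it is exactly the assertion of the lemma, and in particular the vector space $V$ plays no role in the argument and may even be trivial.
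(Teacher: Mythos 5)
Your proof is correct and is exactly the argument the paper leaves implicit: form $K' = K[X]/(p)$, extend scalars via the preceding lemma to get an $L_O$-embedding $M \hookrightarrow M'$ with $M' \models T^0_O$, and then apply existential closedness to the existential sentence asserting that $p$ has a root. No issues.
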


\begin{lem}\label{lem:substructure}
 Suppose $M = (V,K,q)$ is a substructure of $N \models T^{0}_{O}$ and $K'$ is a field with $K \subseteq K' \subseteq K(N)$.  Then the substructure of $N$ generated by $M$ and $K'$ in $N$ satisfies
 $$
\langle M,K' \rangle \cong (K' \otimes_{K} V, K',q').
 $$
 where $q'$ is the canonical extension of $q$ to $K' \otimes_{K} V$.
\end{lem}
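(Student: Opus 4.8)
The plan is to identify $\langle M, K' \rangle$ explicitly. Let $W'$ be the $K'$-linear span of $V$ inside the vector-space sort of $N$, that is, $W' = \{ \sum_i \alpha_i v_i : \alpha_i \in K',\ v_i \in V \}$. One first checks that the $L_O$-substructure of $N$ generated by $M$ and $K'$ is exactly the triple $(W', K', q|_{W'})$. Since $K \subseteq K'$ and $K'$ is already a subfield of $K(N)$, the subring of $K(N)$ generated by $K$ and $K'$ is just $K'$; and $W'$ is visibly closed under addition and under multiplication by scalars from $K'$. What needs verification is that applying the remaining function symbols of $L_O$ --- the quadratic form $q$ and the coordinate functions $\pi_{n,i}$ --- to tuples from $W'$ never yields field elements outside $K'$. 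For $q$ this is a routine induction from $q(x+y) = q(x) + q(y) + \beta(x,y)$, $q(\alpha x) = \alpha^2 q(x)$, and the fact that $q(v), \beta(v,w) \in K$ for $v, w \in V$ since $M$ is a substructure of $N$.

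The one genuinely non-formal point is that the $\pi_{n,i}$ also stay inside $K'$; its resolution rests on the observation that if $v_1, \dots, v_m \in V$ are $K$-linearly independent, then they remain linearly independent over $K(N)$. This is immediate: $\theta_m$ is a relation symbol, so $\theta_m(v_1, \dots, v_m)$ holds in the substructure $M$ if and only if it holds in $N$, and the axioms of $T^0_{V,K}$ equate $\theta_m$ with linear independence. Granting this, fix $w_1, \dots, w_n, w \in W'$ and choose a $K$-basis $e_1, \dots, e_m$ of a finite-dimensional $K$-subspace of $V$ large enough that $w_j = \sum_l c_{jl} e_l$ and $w = \sum_l d_l e_l$ with all $c_{jl}, d_l \in K'$. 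If $w_1, \dots, w_n$ are $K(N)$-linearly dependent, or $w \notin \mathrm{span}_{K(N)}(w_1, \dots, w_n)$, then $\pi_{n,i}$ returns $0 \in K'$ by its defining axioms. Otherwise $w = \sum_j \alpha_j w_j$ with $\alpha_j \in K(N)$; comparing coordinates against the $K(N)$-linearly independent vectors $e_l$ gives the linear system $d_l = \sum_j \alpha_j c_{jl}$ ($1 \leq l \leq m$) with all coefficients in $K'$, whose coefficient matrix $(c_{jl})$ has rank $n$ because the $w_j$ are $K(N)$-independent and matrix rank is insensitive to field extension. Hence this system has a unique solution over $K'$, which must coincide with $(\alpha_1, \dots, \alpha_n)$, so each $\pi_{n,i}(w_1, \dots, w_n, w) = \alpha_i \in K'$. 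This establishes $\langle M, K'\rangle = (W', K', q|_{W'})$.

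It then remains to identify this with $(K' \otimes_K V, K', q')$. The canonical $K$-bilinear map $K' \times V \to W'$, $(\alpha, v) \mapsto \alpha v$, induces a $K'$-linear surjection $\phi : K' \otimes_K V \to W'$ that is the identity on the field sort. For injectivity, write an arbitrary element of $K' \otimes_K V$ in the form $\sum_l \beta_l \otimes e_l$ with $e_1, \dots, e_m \in V$ that are $K$-linearly independent; such a tensor is zero if and only if every $\beta_l = 0$, whereas $\phi(\sum_l \beta_l \otimes e_l) = \sum_l \beta_l e_l$ vanishes only when every $\beta_l = 0$, again by the preservation of linear independence. Finally, $\phi$ intertwines the quadratic forms: for $t = \sum_l \beta_l \otimes e_l$ with the $e_l$ $K$-linearly independent, the definition of $q'$ gives $q'(t) = \sum_l \beta_l^2 q(e_l) + \sum_{l < l'} \beta_l \beta_{l'} \beta(e_l, e_{l'})$, while expanding $q(\phi(t)) = q\big(\sum_l \beta_l e_l\big)$ inside $N$ via the quadratic-form identities produces the same expression, using once more that the bilinear form of $N$ restricts to $\beta$ on $V$. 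Thus $\phi$ is an isomorphism of $L_O$-structures. The main obstacle is the middle step --- checking that the coordinate functions $\pi_{n,i}$ do not enlarge the field sort beyond $K'$; everything else is bookkeeping resting on the preservation of linear independence between $M$ and $N$.
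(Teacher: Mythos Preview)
Your proof is correct and follows essentially the same approach as the paper's: identify $K'\otimes_K V$ with the $K'$-span of $V$ inside $N$, then check closure under the $L_O$-functions, with the coordinate functions $\pi_{n,i}$ being the only nontrivial case and handled by a linear-algebra argument over $K'$. If anything, you are slightly more careful than the paper in two places: you allow $m\geq n$ basis vectors $e_l$ from $V$ when expressing the $w_j$ (the paper takes exactly $n$, which is not always possible), and you explicitly treat the degenerate case where $\pi_{n,i}$ returns $0$.
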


\begin{proof}
 Note that because $M$ is a substructure of $N$ and we have the linear independence predicates in the language, vectors in $V$ that are linearly independent in $M$ remain linearly independent in $N$. Hence, we may identify $K' \otimes_{K} V$ with all elements of $V(N)$ that can be expressed as $\sum_{i = 1}^{n} \alpha_{i} \cdot v_{i}$ with $\alpha_{i} \in K'$ and $v_{i} \in V$. Having made this identification, it is clear that the underlying set of $(K' \otimes_{K} V, K', q')$ is a subset of the structure $\langle M,K' \rangle$ so we merely need to show that this subset is a substructure, i.e. that it is closed under the functions of $L_{O}$. Closure under the field operations on the field sort, the abelian group operations on the vector space sort, and by scalar multiplication are totally clear, so we are left with checking that this set is closed under the quadratic form and the coordinate functions. For the quadratic form, this follows from the formula given above:
$$
q'\left( \sum_{i = 1}^{n} \alpha_{i}\cdot v_{i}\right) = \sum_{i = 1}^{n} \alpha_{i}^{2}q(v_{i}) + \sum_{i < j} \alpha_{i}\alpha_{j} \beta(v_{i},v_{j})
$$
and since for all $i,j$, we have $q(v_{i}), \beta(v_{i},v_{j}) \in K$ and $\alpha_{i} \in K'$, we know that the right-hand side describes an element in $K'$. Now for the coordinate functions, we suppose we are given $w_{1}, \ldots, w_{n+1} \in K' \otimes_{K} V$ such that $w_{1}, \ldots, w_{n}$ are $K'$-linearly independent and $w_{n+1}$ is in their span (as computed in $N$).  We want to show that $w_{n+1}$ is in their $K'$-span. This is easy but we spell out the details.

We can find $n$ linearly independent vectors $v_{1}, \ldots, v_{n} \in V$ such that each $w_{i}$ can be written 
$$
w_{i} = \sum_{j = 1}^{n} \alpha_{ij} v_{j}.
$$
If we have $w_{n+1} = \sum_{i = 1}^{n} \lambda_{i} w_{i}$, then, since we also have 
$$
w_{n+1} = \sum_{j = 1}^{n} \alpha_{n+1,j} v_{j},
$$
we get that 
$$
\left[ 
\begin{matrix}
    \alpha_{1,1} & \ldots & \alpha_{1,n} \\
    \vdots & \ddots & \\
    \alpha_{n,1} & \dots & \alpha_{n,n} 
\end{matrix}
\right] \left[
\begin{matrix}
    \lambda_{1}  \\
    \vdots \\
    \lambda_{n}
\end{matrix}
\right] = \left[
\begin{matrix}
    \alpha_{n+1,1}  \\
    \vdots \\
    \alpha_{n+1,n}
\end{matrix}
\right].
$$
Moreover, fixing coordinates so that $v_{i} = e_{i}$, the matrix $B = (\alpha_{i,j}) \in \mathrm{GL}_{n}(K')$ since it takes the standard basis to $w_{1}, \ldots, w_{n}$. Hence we have
$$
B^{-1} \left[
\begin{matrix}
    \alpha_{n+1,1}  \\
    \vdots \\
    \alpha_{n+1,n}
\end{matrix}
\right] = \left[
\begin{matrix}
    \lambda_{1}  \\
    \vdots \\
    \lambda_{n}
\end{matrix}
\right].
$$
This shows that the $\lambda_{i}$ are obtained by applying a matrix in $\mathrm{GL}_{n}(K')$ to a vector in $(K')^{n}$ so each $\lambda_{i} \in K'$.  This shows that, for each $1 \leq i \leq n$
$$
\pi^{N}_{n,i}(w_{1}, \ldots, w_{n}, w_{n+1}) = \lambda_{i} \in K',
$$
so we have established that $(K' \otimes_{K} V,K',q')$ is equal to the substructure of $N$ generated by $K'$ and $M$. 
\end{proof}

\begin{defn}
    Suppose $A = (V(A),K(A),q)$ is an orthogonal space. 
    \begin{enumerate}
        \item We say that a vector $v \in V(A)$ is \emph{singular} if $q(v) = 0$.  
        \item  We say that $(V(A),q)$ is a \emph{hyperbolic plane} if $\mathrm{dim}(V(A)) = 2$ and there is a basis $\{u,v\}$ of $V(A)$ consisting of singular vectors with $\beta(u,v) = 1$. Such a basis is called a \emph{hyperbolic pair}.
        \item We say that $(V(A),q)$ is \emph{hyperbolic} if $V(A)$ can be written as an orthogonal direct sum of hyperbolic planes.
        \item If $V(A)$ has no non-zero singular vectors, it is called \emph{definite}.  
    \end{enumerate}   We may abuse terminology and refer to two sorted structure $A$ as a hyperbolic plane/a hyperbolic space when we mean that $V(A)$ is.
\end{defn}

We will need the following basic linear algebra facts: 

\begin{fact} \label{fact:Aschbacher}
Let $(V,\beta)$ be non-degenerate and $U$ a subspace of $V$.
    \begin{enumerate}
        \item $\mathrm{dim}(U^{\perp}) = \mathrm{codim}(U)$. \cite[19.2]{aschbacher2000finite}
        \item $(U^{\perp})^{\perp} = U$. \cite[19.3(2)]{aschbacher2000finite}
        \item The subspace $U$ is non-degenerate if and only if $V = U \oplus U^{\perp}$. \cite[19.3(1)]{aschbacher2000finite}
        \item Let $x \in V$ be a non-zero singular vector and suppose $y \in V - x^{\perp}$.  Then the span of $x$ and $y$ is a hyperbolic plane and $x$ is contained in a hyperbolic pair of $\langle x,y\rangle$. \cite[19.2]{aschbacher2000finite}
        \item If $(V,q)$ is an orthogonal space over $F$ with associated bilinear form $\beta$ and $\mathrm{char}(F) = 2$, then $V$ is of even dimension. \cite[19.17]{aschbacher2000finite} 
    \end{enumerate}
\end{fact}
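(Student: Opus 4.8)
The plan is to recover all five items from the classical structure theory of a non-degenerate reflexive bilinear form on a finite-dimensional space; the paper cites Aschbacher for exactly this, but here is how the arguments go. For (1) I would use the ``musical'' map: non-degeneracy of $\beta$ makes $v \mapsto \beta(v,-)$ an isomorphism $V \to V^{*}$, and composing with the restriction $V^{*} \to U^{*}$ gives a surjection $V \to U^{*}$ with kernel precisely $U^{\perp}$, so $\mathrm{dim}(U^{\perp}) = \mathrm{dim}(V) - \mathrm{dim}(U) = \mathrm{codim}(U)$. For (2), the inclusion $U \subseteq (U^{\perp})^{\perp}$ is immediate, and applying (1) twice shows both sides have dimension $\mathrm{dim}(U)$, forcing equality. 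For (3), I would observe that $U \cap U^{\perp}$ is exactly the radical of $\beta|_{U}$, so $U$ is non-degenerate iff $U \cap U^{\perp} = 0$; combined with $\mathrm{dim}(U) + \mathrm{dim}(U^{\perp}) = \mathrm{dim}(V)$ from (1), this is equivalent to $V = U \oplus U^{\perp}$.

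For (4), $y \notin x^{\perp}$ means $\beta(x,y) \neq 0$, so after rescaling $y$ I may assume $\beta(x,y) = 1$; since $x$ is singular we have $q(x) = 0$ and, in characteristic $2$, $\beta(x,x) = q(2x) - 2q(x) = 0$, so $\{x,y\}$ is linearly independent. Then I would replace $y$ by $y' = y + q(y)x$ and compute, using bilinearity of $\beta$ and $q(\alpha v) = \alpha^{2}q(v)$, that $q(y') = q(y) + q(y)^{2}q(x) + q(y)\beta(x,y) = q(y) + q(y) = 0$ and $\beta(x,y') = \beta(x,y) + q(y)\beta(x,x) = 1$, while $\langle x,y'\rangle = \langle x,y\rangle$. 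This exhibits $\{x,y'\}$ as a hyperbolic pair spanning $\langle x,y\rangle$, which is therefore a hyperbolic plane with $x$ sitting in a hyperbolic pair of it.

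For (5), the observation to lean on is that in characteristic $2$ the associated form is alternating, since $\beta(v,v) = q(2v) - 2q(v) = 0$ for all $v$; and a non-degenerate alternating form lives only on an even-dimensional space. I would prove the latter by induction on $\mathrm{dim}(V)$: for $\mathrm{dim}(V) > 0$ pick $x \neq 0$, use non-degeneracy to find $y$ with $\beta(x,y) = 1$ (so $\{x,y\}$ is independent and $H = \langle x,y\rangle$ is a non-degenerate plane), use (3) to split $V = H \oplus H^{\perp}$ with $\beta|_{H^{\perp}}$ again non-degenerate and alternating, and conclude $\mathrm{dim}(V) = 2 + \mathrm{dim}(H^{\perp})$ is even by the inductive hypothesis.

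I do not expect a genuine obstacle here, since these are standard. The only points that need attention are that (1)--(3) as stated presuppose $V$ finite-dimensional (for instance $(U^{\perp})^{\perp} = U$ can fail for infinite-dimensional $U$), which is precisely the regime in which the paper applies them, and the small characteristic-$2$ bookkeeping in (4)--(5) — in particular remembering that $\beta(x,x) = 0$ identically, which is what makes the hyperbolic-pair adjustment in (4) and the parity argument in (5) go through.
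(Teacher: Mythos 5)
The paper gives no proof of this Fact; it is stated with direct citations to Aschbacher's \emph{Finite Group Theory}, so there is no in-paper argument to compare against. Your proofs are the standard ones (the musical isomorphism for (1), double application for (2), the radical characterization for (3), a basis correction $y \mapsto y + q(y)x$ for (4), and the alternating-form parity induction for (5)) and they are all correct. Two small remarks. In (4), invoking characteristic $2$ to get $\beta(x,x)=0$ is not strictly needed: $x$ singular already gives $\beta(x,x) = 2q(x) = 0$ in any characteristic; what does use characteristic $2$ is your specific correction term $q(y)x$, since $q(y') = 2q(y)$ vanishes precisely there (in odd characteristic one subtracts $q(y)x$ after normalizing $\beta(x,y)=1$) — fine for this paper, which works only in characteristic $2$. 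In (5), you pass from $V = H \oplus H^{\perp}$ to ``$\beta|_{H^{\perp}}$ non-degenerate'' without justification; it is immediate (any $w \in H^{\perp}$ with $\beta(w,H^{\perp})=0$ is then orthogonal to $H \oplus H^{\perp} = V$, so $w=0$), but worth a sentence. Your caveat that (1)--(3) presuppose $\dim V < \infty$ is correct and is exactly the regime in which the paper applies them.
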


If $(V,q)$ and $(V',q')$ are both vector spaces over a field $K$ equipped with bilinear forms, an \emph{isometry} from $(V,q)$ to $(V',q')$ is a $K$-linear isomorphism $f: V\to V'$ such that $q(v) = q'(f(v))$ for all $v \in V$. The following fact is known as \emph{Witt's Lemma}:
\begin{fact} \label{Witt} \cite[Ch. 20]{aschbacher2000finite}
    Suppose $(V,q)$ is an orthogonal space over $K$ and we are given an isometry $f: U \to U'$ where $U$ and $U'$ are subspaces of $V$.  Then there is some isometry $\tilde{f} : V \to V$ extending $f$. 
\end{fact}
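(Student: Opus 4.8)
The plan is to argue by induction on $\dim U$, the case $\dim U = 0$ being trivial. For the inductive step I would fix a hyperplane $U_{0} \leq U$, write $U = U_{0} \oplus \langle x\rangle$, and apply the inductive hypothesis to $\sigma|_{U_{0}}$ to get an isometry $\tau$ of $V$ agreeing with $\sigma$ on $U_{0}$; replacing $\sigma$ by $\tau^{-1}\circ\sigma$ reduces us to the case where $\sigma$ fixes $U_{0}$ pointwise and sends $x$ to some $x'$. Put $z = x - x'$. The hypotheses give $q(x) = q(x')$ and $z \in U_{0}^{\perp}$, and since $\mathrm{char}(K) = 2$ (so that $\beta(v,v) = 0$ for every $v$) one computes the identity $\beta(x,z) = \beta(x,x') = q(z)$. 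If $z = 0$ we are done, so we assume $z \neq 0$, and it remains to produce an isometry of $V$ fixing $U_{0}$ pointwise and carrying $x$ to $x'$.

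The ingredients I would use are: the ``reflection'' $r_{z}(v) = v - \frac{\beta(v,z)}{q(z)}z$, defined when $q(z) \neq 0$; for singular $z$ and $u \in z^{\perp}$, the Eichler (Siegel) transvection
$$
E_{z,u}(v) = v + \beta(v,z)\,u - \bigl(\beta(v,u) + q(u)\beta(v,z)\bigr)z ;
$$
and, for singular $z$, the orthogonal splitting $V = H \oplus H^{\perp}$ across a hyperbolic plane $H \ni z$ coming from Facts~\ref{fact:Aschbacher}(3),(4). The key point to verify --- and the only one where characteristic $2$ genuinely intervenes --- is that $r_{z}$ and $E_{z,u}$ preserve the \emph{quadratic} form $q$, not merely the bilinear form $\beta$; both verifications are direct from $q(a+b) = q(a) + q(b) + \beta(a,b)$. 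Note also that $r_{z}$ fixes $z^{\perp}$ pointwise, $E_{z,u}$ fixes $\langle z,u\rangle^{\perp}$ pointwise, and the splitting lets us move vectors inside $H$ while fixing everything in $H^{\perp}$.

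Then I would split into cases. If $q(z) \neq 0$: since $z \in U_{0}^{\perp}$ we have $U_{0} \subseteq z^{\perp}$, so $r_{z}$ fixes $U_{0}$ pointwise, and $r_{z}(x) = x - \frac{\beta(x,z)}{q(z)}z = x - z = x'$ by the identity above. If $q(z) = 0$: then $\beta(x,z) = q(z) = 0$, so $x$ and $x'$ lie in $z^{\perp}$. In the subcase $x \notin U_{0} + \langle z\rangle$, non-degeneracy of $\beta$ on $V$ together with Fact~\ref{fact:Aschbacher}(2) (which gives $\bigl((U_{0}+\langle z\rangle)^{\perp}\bigr)^{\perp} = U_{0}+\langle z\rangle$) provides $u \in (U_{0}+\langle z\rangle)^{\perp}$ with $\beta(x,u) = 1$; then $U_{0} \subseteq \langle z,u\rangle^{\perp}$, so $E_{z,u}$ fixes $U_{0}$ pointwise, and $E_{z,u}(x) = x - \beta(x,u)z = x'$. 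In the subcase $x \in U_{0} + \langle z\rangle$, write $x = u_{0} + cz$ with $u_{0} \in U_{0}$; then $c \neq 0$ (otherwise $x \in U_{0}$) and $c \neq 1$ (otherwise $x' = x - z = u_{0}$, so $\sigma(x) = \sigma(u_{0})$, contradicting injectivity of $\sigma$ and $x \notin U_{0}$). In particular $z \notin U_{0}$, so by Fact~\ref{fact:Aschbacher}(2) there is $w \in U_{0}^{\perp}$ with $\beta(z,w) = 1$; then $H := \langle z,w\rangle$ is a hyperbolic plane (Fact~\ref{fact:Aschbacher}(4)) with $U_{0} \subseteq H^{\perp}$ and $V = H \oplus H^{\perp}$ (Fact~\ref{fact:Aschbacher}(3)), and since $z$ is singular there is an isometry $g$ of $H$ with $g(z) = \frac{c-1}{c}\,z$ (scale $z$ and a hyperbolic partner of it by reciprocal scalars). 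Then $g \oplus \mathrm{id}_{H^{\perp}}$ is an isometry of $V$ fixing $U_{0}$ pointwise and sending $x = u_{0} + cz$ to $u_{0} + (c-1)z = x'$, which completes the induction.

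The hard part will be the case $q(z) = 0$: in characteristic $2$, unlike odd characteristic, one-dimensional subspaces are already degenerate and there is no orthogonal reflection along a singular vector, so the classical argument collapses and one must instead realize the needed isometry through an Eichler transvection or through the hyperbolic-plane splitting, with the small extra bookkeeping needed to see that $c \neq 0,1$. Everything else is routine linear algebra, together with the two characteristic-$2$ computations showing that $r_{z}$ and $E_{z,u}$ are isometries of $q$.
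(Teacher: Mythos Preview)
The paper does not prove this statement: it is recorded as a Fact with a citation to \cite[Ch.~20]{aschbacher2000finite} (Witt's Lemma), so there is no argument in the paper to compare yours against. Your proof is correct and follows the standard route: induct on $\dim U$, reduce to an isometry fixing a hyperplane $U_{0}$ pointwise, set $z=x-x'$, and then realise the required map as the reflection $r_{z}$ when $q(z)\neq 0$, or via an Eichler transvection / hyperbolic-plane splitting when $q(z)=0$. The characteristic-$2$ bookkeeping---the identity $\beta(x,z)=q(z)$, the verifications that $r_{z}$ and $E_{z,u}$ preserve $q$ and not just $\beta$, and the check that $c\neq 0,1$ in the final subcase---is all handled correctly.

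One small point worth making explicit: your argument invokes Fact~\ref{fact:Aschbacher}(2), namely $(U^{\perp})^{\perp}=U$, in two places (to produce $u$ with $\beta(x,u)=1$, and to produce $w$ with $\beta(z,w)=1$). That identity requires $V$ to be finite-dimensional. This is indeed the setting of the cited reference and of every use of Witt's Lemma in the paper (it is only ever applied inside finite-dimensional hyperbolic subspaces), so nothing is lost, but the hypothesis should be stated.
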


\begin{rem}
    We will often use Fact \ref{Witt} for the $2$-sorted structures encoding orthogonal spaces in the following way:  given an orthogonal space $M = (K,V,q)$ and a $K$-isomorphism $f$ between substructures $A = (K,U,q|_{U})$ and $B = (K,U',q|_{U'})$, there is some $\sigma \in \mathrm{Aut}(M/K)$ extending $f$. 
\end{rem}

For a field $K$ of characteristic $2$, the \emph{Artin-Schreier map} is defined by $\wp(x) = x^{2} - x$.  If $F/K$ is a quadratic extension, then $F$ is given by adjoining a root to a polynomial of the form $X^{2} - X - b$ for some $b \in K$.  Such polynomials are called \emph{Artin-Schreier polynomials}.  

Suppose $P(X) = X^{2} - X - b$ is an irreducible Artin-Schreier polynomial over $K$, $\alpha$ is a root of $P(X)$ in the splitting field $F$ of $P$, and $\sigma \in \mathrm{Gal}(F/K)$ is a generator.  Then over $F$, the polynomial factors into 
$$
P(X) = (X - \alpha)(x - \sigma(\alpha)),
$$
which yields, by comparison of coefficients, that $\alpha + \sigma(\alpha) = 1$ and $\alpha \sigma(\alpha) = b$. 

Suppose $Q(X) = X^{2} - X - b'$ is an Artin-Schreier polynomial over $K$ that gives the same extension\textemdash i.e. adding a root $\alpha'$ results in an extension with $F = K(\alpha) = K(\alpha')$.  In this case, it follows that $\alpha' = c \alpha + d$ for some $c,d \in K$.  Then we have 
\begin{eqnarray*}
b' &=& \alpha'\sigma(\alpha')\\
&=& (c + d \alpha)(c + d \sigma(\alpha)) \\
&=& c^{2} + cd\alpha + cd \sigma(\alpha) + d^{2} \alpha \sigma(\alpha) \\
&=& c^{2} + cd + d^{2}b,
\end{eqnarray*}
so $b' = c^{2} +cd + d^{2}b$.  This will turn out to be useful in Lemma \ref{lem:same_quad_ext} below.  

\begin{lem}\label{lem:same_quad_ext}
    Suppose we are given that $(V_{0},q_{0})$ and $(V_{1},q_{1})$ are both $2$-dimensional orthogonal spaces over a perfect field $K$ of characteristic $2$. Suppose $x,y$ is a basis of $V_{0}$ with $q_{0}(x) = 1$, $q_{0}(y) = b_{0}$, and $\beta_{0}(x,y) = 1$ and, likewise, $u,v$ is a basis of $V_{1}$ with $q_{1}(u) = 1$, $q_{1}(v) = b_{1}$, and $\beta_{1}(u,v) = 1$.  Then the Artin-Schreier polynomials $t^{2} + t + b_{0}$ and $t^{2} + t + b_{1}$ give rise to the same quadratic extension of $K$ if and only if $b_{0} - b_{1} \in \wp(K)$.
\end{lem}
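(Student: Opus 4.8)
The plan is to recognize the statement as (essentially) the Artin--Schreier correspondence in characteristic $2$: what is really being asserted is that for $b_0,b_1 \in K$ the quadratic $K$-algebras $K[t]/(t^2+t+b_i)$ coincide exactly when $b_0 - b_1 \in \wp(K)$, the orthogonal-space hypotheses serving only to produce the two polynomials. I would first record the two facts that make the characteristic-$2$ bookkeeping routine: a root of $t^2+t+b$ in an extension of $K$ is precisely an element $\alpha$ with $\wp(\alpha) = b$ (as $t^2+t+b = t^2-t-b$ in characteristic $2$), and $\wp$ is a group homomorphism $(F,+) \to (F,+)$ for every field $F \supseteq K$ (the cross term $2\alpha\beta$ in $\wp(\alpha+\beta)$ vanishes), so in particular $\wp(K)$ is a subgroup of $(K,+)$. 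I would also dispose at the outset of the case where one polynomial splits over $K$: if, say, $t^2+t+b_0$ has a root in $K$ then $b_0 \in \wp(K)$ and the associated extension is $K$ itself, so $t^2+t+b_0$ and $t^2+t+b_1$ give the same extension iff $t^2+t+b_1$ also splits iff $b_1 \in \wp(K)$ iff $b_0 - b_1 \in \wp(K)$ (the last equivalence since $\wp(K)$ is a subgroup containing $b_0$), and the case where $t^2+t+b_1$ splits is symmetric. Hence I may assume both polynomials are irreducible, so that in either direction the extension $F := K(\alpha_i)$ generated by a root $\alpha_i$ of $t^2+t+b_i$ is a genuine quadratic, hence Galois, extension of $K$.

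For the implication $b_0 - b_1 \in \wp(K) \Rightarrow$ same extension: write $b_0 - b_1 = \wp(e)$ with $e \in K$, fix a root $\alpha_0$ of $t^2+t+b_0$, and compute, using additivity of $\wp$, that $\wp(\alpha_0 + e) = \wp(\alpha_0) + \wp(e) = b_0 + (b_0 - b_1) = b_1$; thus $\alpha_0 + e$ is a root of $t^2+t+b_1$, and since $e \in K$ it generates the same extension $K(\alpha_0)$ of $K$.

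For the converse, suppose $F := K(\alpha_0) = K(\alpha_1)$ with $\alpha_i$ a root of $t^2+t+b_i$, and let $\sigma$ generate $\mathrm{Gal}(F/K)$. Since $\{1,\alpha_0\}$ is a $K$-basis of $F$, the discussion preceding the lemma lets me write $\alpha_1 = c + d\alpha_0$ with $c,d \in K$ and records $b_1 = c^2 + cd + d^2 b_0$. The key additional step is to show $d = 1$: applying $\sigma$ and using the relation $\alpha_0 + \sigma(\alpha_0) = 1$ from comparison of coefficients in $t^2+t+b_0 = (t-\alpha_0)(t-\sigma(\alpha_0))$, I get $\alpha_1 + \sigma(\alpha_1) = 2c + d(\alpha_0 + \sigma(\alpha_0)) = d$, whereas the same relation applied to $t^2+t+b_1$ forces $\alpha_1 + \sigma(\alpha_1) = 1$; hence $d = 1$. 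Substituting back gives $b_1 = c^2 + c + b_0$, i.e. $b_0 - b_1 = c^2 - c = \wp(c) \in \wp(K)$ (equivalently, $\alpha_1 - \alpha_0 = c \in K$, so $b_1 - b_0 = \wp(\alpha_1) - \wp(\alpha_0) = \wp(\alpha_1 - \alpha_0) = \wp(c)$).

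I do not expect a genuine obstacle; the argument is short. The only point requiring a moment's thought is the forced value $d = 1$, i.e. that the $K$-linear change of coordinates carrying $\alpha_0$ to $\alpha_1$ must be a pure translation by a scalar rather than an arbitrary affine map; this is precisely where the preliminary identity $b_1 = c^2 + cd + d^2 b_0$ and the normalization $\alpha + \sigma(\alpha) = 1$ of an Artin--Schreier generator come into play. (Perfectness of $K$ is not needed for this particular statement: Artin--Schreier polynomials are separable, so no inseparable quadratic extension can enter the comparison.)
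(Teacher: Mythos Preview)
Your proof is correct and somewhat more direct than the paper's. Both arguments reduce the forward implication to showing $d=1$ in the expression $\alpha_{1}=c+d\alpha_{0}$, after which $b_{0}-b_{1}=\wp(c)$ drops out. You obtain $d=1$ from the \emph{trace} relation: since $\alpha_{i}+\sigma(\alpha_{i})=1$ for each Artin--Schreier generator, applying $\sigma$ to $\alpha_{1}=c+d\alpha_{0}$ forces $1=d\cdot 1$. The paper instead realizes $(V_{i},q_{i})\cong(F,N^{F}_{K})$ as the norm form on the common splitting field and computes $\beta_{F}(1,\alpha_{1})$ via norms; the hypothesis $\beta_{1}(u,v)=1$ then becomes the equation that collapses to $d=1$. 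Unwinding the paper's norm computation one finds it is exactly the trace identity you used, so the routes converge, but your presentation makes clear (as you note) that the orthogonal-space data are not essential to the statement. You also handle explicitly the case where one polynomial is already reducible over $K$, which the paper passes over. Your observation that perfectness of $K$ plays no role is accurate for this lemma in isolation.
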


\begin{proof}
   From left to right, we let $F$ denote the common splitting field of $t^{2} + t + b_{0}$ and $t^{2} + t + b_{1}$ and let $\alpha_{0}$ and $\alpha_{1}$ denote roots of these respective polynomials. So $F = K(\alpha_{0}) = K(\alpha_{1})$ and we may write $\alpha_{1} = c + d\alpha_{0}$ for some $c,d \in K$.  Let $\sigma \in \mathrm{Gal}(F/K)$ be a generator. Recall that the norm map $N^{F}_{K} : F \to K$ is defined by $N^{F}_{K}(z) = z \sigma(z)$.  
    
    We first will argue that $(V_{i},q_{i}) \cong (F,N^{F}_{K})$ via the isomorphism $x \mapsto 1, y \mapsto \alpha_{0}$ for $i = 0$ and via the isomorphism $u \mapsto 1$, $v \mapsto \alpha_{1}$ in the case that $i = 1$, where we view $F$ as a $2$-dimensional $K$-vector space.  Notice that $q_{0}(x) = N^{F}_{K}(1) = 1$, $q_{0}(y) = N^{F}_{K}(\alpha_{0}) = \alpha_{0} \sigma(\alpha_{0}) = b_{0}$ and, moreover, 
    $$
    N^{F}_{K}(1+\alpha_{0}) = (1 + \alpha_{0})(1 + \sigma(\alpha_{0})) = 1 + \alpha_{0} + \sigma(\alpha_{0}) + \alpha_{0} \sigma(\alpha_{0}) = 1+1+b_{0} = b_{0},
    $$
    which entails that if $\beta_{F}$ is the bilinear form associated to $(F,N^{F}_{K})$, then we have 
    \begin{eqnarray*}
    \beta_{F}(1,\alpha_{0}) &=& N^{F}_{K}(1 + \alpha_{0}) + N^{F}_{K}(1) + N^{F}_{K}(\alpha_{0}) \\
    &=& \beta_{0}(x,y).
    \end{eqnarray*}
    Since we have shown $q_{0}(x) = N^{F}_{K}(1)$, $q_{0}(y) = N^{F}_{K}(\alpha_{0})$, and $\beta_{0}(x,y) = \beta_{F}(1,\alpha_{0})$, it follows that the map $x \mapsto 1$, $y \mapsto \alpha_{0}$ extends to an isomorphism $(V_{0},q_{0}) \to (F,N^{F}_{K})$.  An identical argument shows that the map $u \mapsto 1$, $v \mapsto \alpha_{1}$ extends to an isomorphism $(V_{1},q_{1}) \to (F,N^{F}_{K})$. 

    Now we can calculate, using that $\beta_{1}(u,v) = 1$:
    \begin{eqnarray*}
        1 &=& N^{F}_{K} (1 + \alpha_{1}) + N^{F}_{K}(1) + N^{F}_{K}(\alpha_{1}) \\
        &=& N^{F}_{K}(1 + c + d\alpha_{0}) + 1 + N^{F}_{K}(c + d \alpha_{0}) \\
        &=& (1 + c + d \alpha_{0})(1 + c + d\sigma(\alpha_{0})) + 1 + (c + d \alpha_{0})(c + d \sigma(\alpha_{0})) \\
        &=& (1 + c)^{2} + (1+c)d(\alpha_{0} + \sigma(\alpha_{0})) + d^{2}\alpha_{0}\sigma(\alpha_{0}) + 1 + (c + d \alpha_{0})(c + d \sigma(\alpha_{0})) \\
        &=& 1 + c^{2} + d + cd + b_{0}d^{2} + 1 + c^{2} + cd(\alpha_{0} + \sigma(\alpha_{0})) + d^{2}\alpha_{0}\sigma(\alpha_{0}) \\
        &=& d + cd + b_{0}d^{2} + cd + d^{2} b_{0} \\\\
        &=& d.
    \end{eqnarray*}
    This shows $d = 1$, so $\alpha_{1} = c + \alpha_{0}$.  Next, we observe 
    \begin{eqnarray*}
        b_{1} &=& N^{F}_{K}(\alpha_{1}) \\
        &=& N^{F}_{K}(c + \alpha_{0}) \\
        &=& (c + \alpha_{0}) (c + \sigma(\alpha_{0})) \\
        &=& c^{2} + c(\alpha_{0} + \sigma(\alpha_{0})) + \alpha_{0}\sigma(\alpha_{0}) \\
        &=& c^{2} + c + b_{0},
    \end{eqnarray*}
    so $b_{1} - b_{0} = c^{2} + c \in \wp(K)$.  This completes the proof.

    For the other direction, we let $b_{0} - b_{1} \in \wp(K)$, then $t^{2} + t + b_{0}$ and $t^{2} + t + b_{1}$ give rise to the same Artin-Schreier extension, without requiring the use of any further assumptions. 
\end{proof}

\begin{lem} \label{lem:hypext}
    Suppose $M \models T^{0}_{O}$ and $A \subseteq M$ is a finitely generated substructure such that $\beta$ is non-degenerate on $V(A)$ and $\mathrm{dim}(V(A)) = 2n$. 
    \begin{enumerate}
        \item If $K(M) \models \mathrm{ACF}_{2}$, then there is a finitely generated $A'$ with $A \subseteq A' \subseteq M$ such that $\mathrm{dim}(V(A')) = 2n$ (i.e. $A'$ is an extension by scalars of $A$) and $(V(A'),q)$ is a hyperbolic space over $K(A')$. 
        \item If $K(M) \models \mathrm{PSF}_{2}$, then there is a finitely generated $A'$ with $A \subseteq A' \subseteq M$ such that $\mathrm{dim}(V(A')) = 2n$ and $(V(A'),q)$ is an orthogonal space over $K(A')$ which is either hyperbolic or is the orthogonal direct sum of a hyperbolic space and a 2-dimensional definite orthogonal space that contains a vector with $q(v) = 1$. 
    \end{enumerate}
\end{lem}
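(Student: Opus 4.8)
We argue both parts by induction on $n$, splitting off one hyperbolic plane at each stage; write $\mathbb{H}^{k}$ for an orthogonal direct sum of $k$ hyperbolic planes. Part (1) is the simpler case, so I describe (2) and indicate the simplifications for (1); the case $n=0$ is vacuous (take $A'=A$). In the inductive step the plan is: (i) produce a nonzero singular vector in some finitely generated extension of scalars of $A$ inside $M$; (ii) split off a hyperbolic plane using Fact \ref{fact:Aschbacher}; and (iii) recurse on the orthogonal complement, then extend scalars of the whole space to the field produced by the recursion.

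For step (i) when $n\geq 2$ (so $\dim V(A)=2n\geq 4$): using non-degeneracy of $\beta$, choose $x,y\in V(A)$ with $\beta(x,y)=1$; since $\beta$ is alternating in characteristic $2$, the plane $H_{0}:=\langle x,y\rangle$ is non-degenerate, so $V(A)=H_{0}\oplus H_{0}^{\perp}$ by Fact \ref{fact:Aschbacher}(3), with $\dim H_{0}^{\perp}=2n-2>0$. Since $\beta$ is non-degenerate, $q$ cannot vanish identically on $H_{0}^{\perp}$, so pick $z\in H_{0}^{\perp}$ with $q(z)=c\neq 0$. On $W:=\langle x,y,z\rangle$ one computes $q(sx+ty+uz)=q(x)s^{2}+st+q(y)t^{2}+cu^{2}$, and the Jacobian criterion in characteristic $2$ shows the associated plane conic $C\subseteq\mathbb{P}^{2}_{K(A)}$ is smooth, hence geometrically integral. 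As $K(M)\models\mathrm{PSF}_{2}$ is PAC, $C$ has a $K(M)$-rational point; the finitely many coordinates of such a point generate a finitely generated field $K_{1}$ with $K(A)\subseteq K_{1}\subseteq K(M)$, and by Lemma \ref{lem:substructure} the structure $A_{1}:=\langle A,K_{1}\rangle$ is the extension of scalars of $A$ to $K_{1}$ — still of vector-space dimension $2n$, with $\beta$ non-degenerate — and it contains a nonzero singular vector $w$. For part (1), step (i) is easier and applies for all $n\geq 1$: the restriction of $q$ to $\langle x,y\rangle$ is $q(x)s^{2}+st+q(y)t^{2}$, so either $x$ is already singular, or a root of $q(x)t^{2}+t+q(y)$ — which exists in $K(M)\models\mathrm{ACF}_{2}$ and is algebraic over $K(A)$ — yields a singular vector after a finite extension of scalars; no separate base case beyond $n=0$ is then needed.

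For steps (ii) and (iii): with the nonzero singular $w\in V(A_{1})$ in hand, since $\dim V(A_{1})=2n\geq 2$ and $\beta$ is non-degenerate, $w^{\perp}$ is a proper subspace, so fix $y'\notin w^{\perp}$; by Fact \ref{fact:Aschbacher}(4), $H:=\langle w,y'\rangle$ is a hyperbolic plane, and being non-degenerate it gives $V(A_{1})=H\oplus H^{\perp}$ by Fact \ref{fact:Aschbacher}(3), with $\dim H^{\perp}=2(n-1)$ and $\beta$ non-degenerate on $H^{\perp}$. Let $A_{2}$ be the substructure of $A_{1}$ with vector part $H^{\perp}$ and field part $K_{1}$; as in the proof of Lemma \ref{lem:substructure}, closure under the coordinate functions makes this an $L_{O}$-substructure, so $A_{2}\models T^{0}_{O}$ is finitely generated with $\dim V(A_{2})=2(n-1)$. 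Applying the inductive hypothesis to $A_{2}\subseteq M$ yields a finitely generated $A_{3}$, $A_{2}\subseteq A_{3}\subseteq M$, of vector dimension $2(n-1)$, with $(V(A_{3}),q)$ hyperbolic, or (case (2)) either hyperbolic or equal to $\mathbb{H}^{n-2}\perp D$ with $D$ a $2$-dimensional definite space containing a vector of $q$-value $1$. Now set $A':=\langle A_{1},K(A_{3})\rangle$, the extension of scalars of $A_{1}$ to $K(A_{3})$ (Lemma \ref{lem:substructure}); then $V(A')=\bigl(K(A_{3})\otimes_{K_{1}}H\bigr)\oplus\bigl(K(A_{3})\otimes_{K_{1}}H^{\perp}\bigr)$, the first summand being again a hyperbolic plane (a hyperbolic pair persists under extension of scalars) and the second equal to $V(A_{3})$ since $K(A_{2})=K_{1}$. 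Hence $V(A')$ has the asserted form, $\dim V(A')=2n$, and $A\subseteq A_{1}\subseteq A'\subseteq M$. In case (2), the base $n=1$ is separate: if $V(A)$ contains a nonzero singular vector it is a hyperbolic plane by Fact \ref{fact:Aschbacher}(4); otherwise pick $x$ with $q(x)=a\neq 0$ and pass to $K_{1}:=K(A)(\sqrt{a})\subseteq K(M)$ (using that $K(M)$ is perfect), so that $a^{-1/2}x$ has $q$-value $1$ and $V(\langle A,K_{1}\rangle)$ is either a hyperbolic plane or a $2$-dimensional definite space containing a vector of $q$-value $1$.

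The main obstacle is step (i) in part (2): since $K(A)$ need not be perfect or pseudo-finite, one cannot appeal directly to $C_{1}$-type isotropy of $q$ on $V(A)$, and a generic non-degenerate $2$-dimensional subspace is useless because its zero locus in $\mathbb{P}^{1}$ is $0$-dimensional — too small for pseudo-algebraic closure. The resolution is to produce a \emph{positive-dimensional} geometrically integral subvariety of the quadric, namely the smooth plane conic inside the carefully chosen $3$-dimensional subspace $\langle x,y,z\rangle$, so that PAC-ness of $K(M)$ supplies a rational point pulling back to a singular vector over a finitely generated extension; a secondary use of perfectness of $K(M)$ ensures the residual definite summand $D$ contains a vector of $q$-value $1$.
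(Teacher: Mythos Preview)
Your argument is correct, but it differs from the paper's in two respects worth noting.

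First, the induction is organized differently. You peel off one hyperbolic plane at a time, recurse on its orthogonal complement, and then extend scalars of the whole space to the field produced by the recursion. The paper instead inducts to produce a hyperbolic subspace of codimension $2$, reducing along the way to the case $\dim V(A)=4$, and only at the very end analyzes the residual $2$-dimensional summand. Both schemes work; yours is a bit cleaner to state, while the paper's avoids having to check that the inductively produced $A_{3}$ really is an extension of scalars of $A_{2}$.

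Second, and more substantively, your mechanism for producing a nonzero singular vector in case~(2) is genuinely different. You pass to a carefully chosen $3$-dimensional subspace $\langle x,y,z\rangle$, observe that the associated plane conic $q(x)s^{2}+st+q(y)t^{2}+q(z)u^{2}$ is smooth (hence geometrically integral) when $q(z)\neq 0$, and then invoke PAC-ness of $K(M)$ to get a rational point. The paper does something more elementary that uses only perfectness: in a non-degenerate $4$-dimensional piece, pick any nonzero $x$, scale (via $\sqrt{q(x)}\in K(M)$) so that $q(x)=1$, pick $y\in x^{\perp}\setminus\langle x\rangle$ (possible since $\dim x^{\perp}=3$), scale so that $q(y)=1$, and observe that $q(x+y)=q(x)+q(y)+\beta(x,y)=1+1+0=0$. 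So your ``main obstacle'' paragraph somewhat overstates the difficulty: PAC is not actually needed to find the singular vector, and the paper's trick shows that perfectness alone suffices. Your conic argument is nonetheless a valid and pleasantly geometric alternative, and would generalize to other PAC base fields where the elementary trick might not be the first thing one tries.
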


\begin{proof}
    We first will argue by induction on $n$ that in both cases (1) and (2) that there is such an $A' \supseteq A$ with $(V(A'),q)$ containing a hyperbolic subspace of dimension $2n - 2$. For $n = 1$ there's nothing to show. 

    Assume the conclusion has been established for $n$ and $(V(A),q)$ is non-degenerate of dimension $2n+2$.  After replacing $A$ by a finitely generated extension, we may assume, by the inductive hypothesis, that $V(A)$ contains a hyperbolic subspace $H_{0} \subseteq V(A)$ of dimension $2n - 2$.  Since $H_{0}$ is non-degenerate, we have, by Fact \ref{fact:Aschbacher}(3), $V(A) = H_{0} \oplus H_{0}^{\perp}$ and $H_{0}^{\perp}$ is non-degenerate with $\dim(H_{0}^{\perp}) = \mathrm{codim}(H_{0}) = 4$.  Replacing $V(A)$ with $H_{0}^{\perp}$, we are reduced to showing that if $V(A)$ has dimension $4$, then there is a finitely generated extension of scalars which contains a hyperbolic plane. 

    So now we assume $V(A)$ is non-degenerate and has dimension $4$.  Pick any non-zero $x \in V(A)$.  Assuming $x$ is non-singular, we know that, since $K(M)$ is perfect, $\sqrt{q(x)} \in K(M)$, so, replacing $A$ by a finitely generated extension, we may assume $\sqrt{q(x)} \in K(A)$.  Then replacing $x$ by $\frac{x}{\sqrt{q(x)}}$, we may assume $q(x) = 1$.  Then, by Fact \ref{fact:Aschbacher}(1), $\dim(x^{\perp}) = \mathrm{codim}(\langle x \rangle) = 3$ so we can pick some $y \in x^{\perp} - \langle x \rangle$ (where both terms are computed in $V(A)$. Again, assuming $y$ is non-singular, after maybe passing to a finitely generated extension, we may assume $\sqrt{q(y)} \in K(A)$, so that, after scaling, we have $q(y) = 1$. Then, since $x$ and $y$ are orthogonal, we have $q(x+y) = q(x) + q(y) = 2q(x) = 0$.  Hence, we have found a singular vector and, thus, $V(A)$ contains a hyperbolic plane by Fact \ref{fact:Aschbacher}(4). 

    So now we prove (1) and (2) of the statement.  Suppose $V(A)$ has dimension $2n$.  We have shown that there is a finitely generated extension $A'$ of $A$ and a $2n-2$ dimensional hyperbolic subspace $H \subseteq V(A')$.  Then, working in $V(A')$, we have $\mathrm{dim}(H^{\perp}) = \mathrm{codim}(H) = 2$.  Moreover, $H \cap H^{\perp} = 0$, so we know that $H^{\perp}$ is non-degenerate by Fact \ref{fact:Aschbacher}(3) and $V(A')$ is an orthogonal direct sum of $H$ and $H^{\perp}$.  If $H^{\perp}$ contains a singular vector, then we are done and otherwise $H^{\perp}$ is definite, so we have already proved (2).  To prove (1), we just note that, in the case that $K(M) \models ACF_{2}$, we can, after passing to a finitely generated extension, ensure that there is a singular vector in $H^{\perp}$.  Suppose that there's no singular vector in $H^{\perp}$.  Then, we have seen that, after passing to a finitely generated extension, we can assume there is some $v \in H^{\perp}$ with $q(v) = 1$.  By non-degeneracy, there must be some $y \in H^{\perp}$ with $\beta(x,y) \neq 0$ since, and, after scaling, we may assume $\beta(x,y) = 1$. Then, denoting $q(y)$ by $b$, we consider the polynomial 
    $$
    P(t) = q(tx + y) = \beta(tx,y) + q(tx) + q(y) = t^{2} + t + b.
    $$
    We know that $P(t)$ is not irreducible over $K(M)$, so we may pass to a finitely generated extension $A''$ of $A'$ in which $K(A'')$ contains a root $\alpha$ of $P(t)$.  Then we see $\alpha x + y \in V(A'')$ and, in fact, in $H^{\perp}$, computed in $V(A'')$.  Therefore, we have found a singular vector in $H^{\perp}$ in $V(A'')$ so $V(A'')$ can be written as an orthogonal direct sum of $n$ hyperbolic planes. 
\end{proof}

\begin{lem} \label{lem:hyperbolic}
    Suppose $M \models T^{0}_{O}$ with $V(M)$ infinite dimensional and suppose further that $A \subseteq M$ is a finitely generated substructure such that $\beta$ is non-degenerate on $V(A)$ and $\mathrm{dim}(V(A)) = 2n$. If $K(M)$ is a perfect field with a unique quadratic Galois extension, then there is a finitely generated $A' \supseteq A$ such that $\mathrm{dim}(V(A')) = 2n+2$ and $(V(A'),q)$ is hyperbolic. 
\end{lem}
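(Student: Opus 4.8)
The plan is to first replace $A$ by a finitely generated $A_1$ with $\dim V(A_1)=2n$ whose quadratic space is as simple as possible, and then to add two fresh dimensions, taken from the orthogonal complement of $V(A_1)$ inside $V(M)$, so as to cancel the anisotropic part. For the first step I would run the argument in the proof of Lemma~\ref{lem:hypext}(2) --- the portion of that proof producing the decomposition ``hyperbolic part $\perp$ a $2$-dimensional definite space with a vector of $q$-value $1$'' uses only that $K(M)$ is perfect --- to obtain $A\subseteq A_1\subseteq M$ finitely generated with $\dim V(A_1)=2n$ and $(V(A_1),q)$ either hyperbolic or of the form $H_1\perp D_0$, with $H_1$ hyperbolic of dimension $2n-2$ and $D_0$ a non-degenerate $2$-dimensional definite subspace; using perfectness, choose a basis $v_0,w_0$ of $D_0$ with $q(v_0)=1$, $\beta(v_0,w_0)=1$, and set $b_0:=q(w_0)\in K(A_1)$.

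Next I would analyze $U:=V(A_1)^{\perp}$, computed in $V(M)$. By Fact~\ref{fact:Aschbacher}(3) we have $V(M)=V(A_1)\oplus U$ with $U$ non-degenerate, and since $V(M)$ is infinite-dimensional so is $U$. The key linear-algebra input is that $U$ contains a non-zero singular vector: if $U$ were definite, pick $x\in U\setminus\{0\}$ and, using that $x\in x^{\perp}$ and that $x^{\perp}\cap U$ has codimension one, a vector $z\in (x^{\perp}\cap U)\setminus\langle x\rangle$; then $q(x),q(z)\neq 0$, and since $K(M)$ is perfect, $q(x)^{-1/2}x+q(z)^{-1/2}z$ is a non-zero vector with $q$-value $1+1=0$, contradicting definiteness. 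Hence, by Fact~\ref{fact:Aschbacher}(4), $U$ contains a hyperbolic plane $H_2$, and applying the same reasoning inside $U\cap H_2^{\perp}$ yields a second hyperbolic plane $H_3\subseteq U$ orthogonal to $H_2$.

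If $V(A_1)$ is hyperbolic, I would finish by writing $H_2=\langle p,r\rangle$ for a hyperbolic pair $p,r$ and setting $A':=\langle A_1,p,r\rangle$: since $p,r\perp V(A_1)$ and $q(p),q(r),\beta(p,r)\in K(A_1)$, one checks (exactly as in Lemma~\ref{lem:substructure}, so that the coordinate functions introduce nothing new) that $A'=(V(A_1)\oplus K(A_1)p\oplus K(A_1)r,\ K(A_1),\ q)$, which has $V(A')$ of dimension $2n+2$ with $V(A')=V(A_1)\perp H_2$ an orthogonal sum of hyperbolic spaces. If instead $V(A_1)=H_1\perp D_0$, write $H_2=\langle p_1,p_2\rangle$, $H_3=\langle p_3,p_4\rangle$ as hyperbolic pairs and put $d_1:=p_1+p_2$ and $d_2:=p_1+p_3+b_0p_4$; a direct computation gives $q(d_1)=1$, $q(d_2)=b_0$, $\beta(d_1,d_2)=1$, so $D_1:=\langle d_1,d_2\rangle\subseteq U$ is isometric to $D_0$ via $v_0\mapsto d_1$, $w_0\mapsto d_2$, and, again, no new scalars are forced, so $A':=\langle A_1,d_1,d_2\rangle=(V(A_1)\oplus K(A_1)d_1\oplus K(A_1)d_2,\ K(A_1),\ q)$ has $V(A')$ of dimension $2n+2$ equal to $H_1\perp D_0\perp D_1$. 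It remains to see that $D_0\perp D_1\cong D_0\perp D_0$ is hyperbolic: it is a non-degenerate $4$-dimensional space containing the $2$-dimensional totally singular ``diagonal'' spanned by $(v_0,v_0)$ and $(w_0,w_0)$ (each has $q$-value $0$, and $\beta$ between them is $1+1=0$); splitting off a hyperbolic plane through $(v_0,v_0)$ by Fact~\ref{fact:Aschbacher}(4) and correcting $(w_0,w_0)$ so that it lies in the orthogonal complement of that plane produces a non-zero singular vector there, whence the complement is also a hyperbolic plane and $D_0\perp D_1$ is hyperbolic; so $V(A')=H_1\perp(D_0\perp D_1)$ is hyperbolic.

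I expect Step~4 to be the main obstacle: the new plane $D_1$ must be pinned down concretely enough that $\langle A_1,d_1,d_2\rangle$ forces no extension of $K(A_1)$ (which would push the dimension past $2n+2$), while at the same time $D_0\perp D_1$ must be split --- the underlying classical fact being that the orthogonal double of any non-degenerate $2$-dimensional quadratic form in characteristic $2$ is hyperbolic, which I would prove via the totally singular diagonal as above. (Alternatively, one could split on whether $b_0\in\wp(K(M))$: when it is, extend scalars so that $D_0$ itself becomes a hyperbolic plane and reduce to the hyperbolic case; when it is not, the hypothesis that $K(M)$ has a unique quadratic extension guarantees that $D_1$ may be taken to be any $2$-dimensional definite subspace of $U$ containing a vector of $q$-value $1$, since all such are isometric to $D_0$ --- but the explicit choice of $d_1,d_2$ avoids these cases entirely.)
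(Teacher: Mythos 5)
Your proof is correct, and it takes a genuinely different route through the heart of the argument. Both your proof and the paper's begin by invoking Lemma~\ref{lem:hypext} to normalize $V(A)$ to $H_1\perp D_0$ with $D_0$ a $2$-dimensional definite piece containing a vector of $q$-value $1$, and both close with the ``totally singular diagonal'' device to conclude that the sum of $D_0$ with a copy of itself is hyperbolic. The difference lies in how the copy of $D_0$ is produced inside $V(A)^{\perp}$. The paper picks essentially arbitrary $u,v\in V(A)^{\perp}$ with $q(u)=1$, $\beta(u,v)=1$, and then uses the Artin--Schreier arithmetic of Lemma~\ref{lem:same_quad_ext}, together with the hypothesis that $K(M)$ has a unique quadratic Galois extension, to find $c$ with $q(cu+v)=b_0$; this is where the hypothesis is genuinely consumed. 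You instead manufacture $d_1=p_1+p_2$ and $d_2=p_1+p_3+b_0p_4$ from two orthogonal hyperbolic planes $H_2\perp H_3\subseteq V(A)^{\perp}$, which gives $q(d_1)=1$, $q(d_2)=b_0$, $\beta(d_1,d_2)=1$ by direct computation and introduces no new scalars. This sidesteps Lemma~\ref{lem:same_quad_ext} entirely, replacing it with the classical fact (which you prove via the diagonal) that $D_0\perp D_0$ is hyperbolic in characteristic~$2$. The upshot is that your argument uses only perfectness of $K(M)$ and the infinite dimension of $V(M)$, so it in fact establishes the lemma without the ``unique quadratic Galois extension'' hypothesis — a modest strengthening — at the small cost of needing $4$ dimensions of room in $V(A)^{\perp}$ rather than the $2$ used by the paper. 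Your accounting of the coordinate functions (no new scalars forced because all relevant values of $q$ and $\beta$ land in $K(A_1)$, so the change-of-basis matrix is over $K(A_1)$) is the right check and is carried out correctly.
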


\begin{proof}
    By Lemma \ref{lem:hypext}, we may assume, after replacing $A$ with a finitely generated extension, that $V(A)$ is either hyperbolic or the orthogonal direct sum of $n-1$ hyperbolic planes and a non-degenerate definite space which contains a vector $x$ with $q(x) = 1$. The hyperbolic case is easy to handle using Lemma \ref{lem:hypext}:  choose any $4$-dimensional non-degenerate subspace of $V(A)^{\perp}$ in $V(M)$ (which exists since $V(M)$ is non-degenerate and infinite dimensional) and then the argument of the lemma proves that there is a hyperbolic plane in this four-dimensional space.  The structure generated by a basis of this hyperbolic plane and $V(A)$, then, gives the desired hyperbolic space of dimension $2n+2$. 

    So now we assume that $V(A)$ is the orthogonal direct sum of $n-1$ hyperbolic planes and a non-degenerate definite space, which has basis $x,y$ with $q(x) = 1$.  By non-degeneracy, we know $\beta(x,y) \neq 0$ so, after scaling $y$, we may assume $\beta(x,y) = 1$. Let $b$ denote $q(y)$.  We define a polynomial $P(t)$ by 
    $$
    P(t) = q(tx+y) = q(tx) + \beta(tx,y) + q(y) = t^{2} + t + b.
    $$
    Our assumption that $x,y$ spans a definite subspace of $V(A)$ entails that $P(t)$ is irreducible over $K(A)$.  If it is not irreducible over $K(M)$, then we may obtain a finitely generated extension $A'$ of $A$ in $K(M)$ by adjoining one of the roots $\alpha$ in $K(M)$.  Then $\alpha x + y \in V(A')$ and this is a singular vector, so, by \ref{fact:Aschbacher}(4), we get that $V(A')$ is hyperbolic and we reduce to the case handled above.  So we may assume $P(t)$ is irreducible over $K(M)$. 

    By non-degeneracy, we may choose some $u \in V(A)^{\perp}$ in $V(M)$ and, after possibly adding $\sqrt{q(u)}$ to $A$, we may assume $q(u) = 1$. Again by non-degeneracy, we may choose some $v \in V(M)$ which is in $V(A)^{\perp} - u^{\perp}$ which, after scaling, we may assume satisfies $\beta(u,v) = 1$.  Set $q(v) = b'$.  Then we define the polynomial $Q(t)$ by 
    $$
    Q(t) = q(tu + v) = t^{2} + t + b'.
    $$
    As described above, if $Q(t)$ is irreducible over $K(M)$, then there is a finitely generated extension in which we adjoin a root and hence obtain a singular vector in the span of $u$ and $v$.  This would imply that $u$ and $v$ span a hyperbolic plane by Fact \ref{fact:Aschbacher}(4), so the finitely generated extension obtained by then adding $u$ and $v$ gives us what we want.  So we may assume $Q(t)$ is also irreducible. We know that, since $P(t)$ and $Q(t)$ give rise to the same quadratic extension of $K(M)$, so, by Lemma \label{lem:same_quad_ext}, there must be some $c \in K(M)$ such that 
    $$
    b' = c^{2} + c+ b.
    $$
    By possibly extending scalars, we may assume $c \in K(A)$.  Let $v' = cu + v$.  Then we have $q(cu+v) = b' + c^{2} + c = b$ and also $\beta(u,v') = 1$.  

    Now consider the subspace spanned by $x+u$ and $y+v'$.  Since $x$ and $u$ are orthogonal, we have $q(x+u) = q(x) + q(u) = 1+1 = 0$ and, likewise, $q(y+v') = b+b = 0$. Moreover, since $x,y$ are orthogonal to $u,v$, we have 
    $$
    \beta(x+u,y+v') = \beta(x,y) + \beta(u,v') = 1+1 = 0.  
    $$
    It follows, then, that the span of $x+u$ and $y+v$ is a totally singular subspace of dimension $2$, so the span of $x,y,u,v$ is hyperbolic by \ref{fact:Aschbacher}(4).  
\end{proof}

\subsection{Back and forth for orthogonal spaces}

By a \emph{pseudo-finite} orthogonal space, we mean an infinite model $M \models T^{0}_{O}$ such that if $\phi \in L_{O}$ is a sentence with $M \models \phi$, then there is a finite $M' \models T^{0}_{O}$ with $M' \models \phi$.  Equivalently, $M$ is infinite and elementarily equivalent to an ultraproduct of finite orthogonal spaces.  The requirement that $M$ be infinite forces that either the field is an infinite pseudo-finite field or $V$ is of infinite dimension.  We are interested in describing all the possibilities for $\mathrm{Th}(M)$.

\begin{prop} \label{prop:orthogonal bnf}
    Suppose $M, N \models T^{0}_{O}$ are orthogonal spaces with $K(M),K(N)$ $\aleph_{0}$-saturated and either both models of ACF$_{2}$ or elementarily equivalent pseudo-finite fields of characteristic $2$ (as $L_{\mathrm{ring}}$-structures) and with both $V(M)$ and $V(N)$ infinite dimensional.  Then the set of isomorphisms $f:A \to B$ with $A$ and $B$ between finitely generated substructures of $M$ and $N$ such that the induced map $f|_{K(A)}$ is partial elementary (as an $L_{\mathrm{ring}}$-embedding) has the back and forth property. 
\end{prop}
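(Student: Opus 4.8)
The plan is to show that the set $I$ of isomorphisms $f : A \to B$ between finitely generated substructures of $M$ and $N$ with $f|_{K(A)}$ partial elementary is nonempty and has the back-and-forth property; by symmetry it suffices to treat the ``forth'' direction. So fix $f : A \to B$ in $I$ and an element $c \in M$; I must extend $f$ to a larger finitely generated substructure containing $c$. There are two cases according to whether $c \in K(M)$ or $c \in V(M)$, and the field case should be disposed of first. If $c \in K(M)$, then since $f|_{K(A)}$ is partial elementary and $K(N)$ is $\aleph_0$-saturated, I can find $c' \in K(N)$ realizing the image under $f$ of $\mathrm{tp}(c/K(A))$; by Lemma~\ref{lem:substructure}, the substructure $\langle A, c\rangle$ of $M$ is $(K(A)(c) \otimes_{K(A)} V(A), K(A)(c), q')$ and similarly on the $N$ side, so $f$ extends canonically to an isomorphism of these extension-of-scalars structures, and the field part remains partial elementary by choice of $c'$. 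Thus I may henceforth assume $K(A) = K(M)$ on the field sort whenever convenient, or at least that we only need to add vectors.

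Now suppose $c \in V(M) \setminus V(A)$. Working over the common field $K(A)$ (using the reduction above, after first enlarging $A$ so its field is as needed), I want to find $c' \in V(N)$ such that the $K(A)$-linear map $V(A) + K(A)c \to V(B) + K(A)c'$ sending $c \mapsto c'$ and extending $f$ is an isometry for the quadratic forms — then by Lemma~\ref{lem:substructure} (closure under the $L_O$-functions) this gives a legitimate $L_O$-substructure isomorphism in $I$. Decompose $c = c_0 + w$ where $w \in V(A)$ and $c_0 \in V(A)^{\perp}$ (possible after extending scalars so that $V(A)$ is non-degenerate: enlarge $A$ to contain a basis of $\mathrm{rad}(V(A))$ paired appropriately, or simply first apply Lemma~\ref{lem:hyperbolic} / Lemma~\ref{lem:hypext} to arrange $V(A)$ non-degenerate). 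Since $f$ is already defined on $w$, it suffices to realize the one-dimensional piece $c_0$: I need $c' = f(w) + c_0'$ with $c_0' \in V(B)^{\perp}$, $q(c_0') = q(c_0)$, and $\beta(c_0', \cdot) = 0$ on $V(B)$. So the crux is: given a scalar $a = q(c_0) \in K(M) = K(N)$, find a vector in $V(B)^{\perp}$ with $q$-value exactly $a$. Here is where $V(N)$ being infinite dimensional and the field being algebraically closed (resp. pseudo-finite) enters, via Lemma~\ref{lem:hypext} and Lemma~\ref{lem:hyperbolic}: enlarge $B$ so that $V(B)^{\perp}$ (inside $V(N)$) contains a large hyperbolic subspace, in which every scalar value of $q$ is attained (on a hyperbolic plane with pair $u,v$, $q(tu + sv) = ts\beta(u,v)$ takes all values), so $c_0'$ can be found; the corresponding enlargement on the $M$ side is an extension of scalars of $A$ together with $c_0$, which is fine.

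The main obstacle is the definite case for pseudo-finite fields: if $V(B)^{\perp}$ cannot be made hyperbolic but only hyperbolic-plus-a-2-dimensional-definite-piece (the second alternative in Lemma~\ref{lem:hypext}(2)), then not every scalar value is attained by a singular-free configuration, and I must match up the ``Witt defect'' data — i.e., when $c_0$ is such that $q$ restricted to the new plane is anisotropic, the anisotropic planes on the two sides must be isometric, which by Lemma~\ref{lem:same_quad_ext} and Lemma~\ref{lem:hyperbolic} reduces to the two fields having the same (unique) quadratic extension, guaranteed by $K(M) \equiv K(N)$ as pseudo-finite fields of characteristic $2$. One then invokes Witt's Lemma (Fact~\ref{Witt}) to transport the chosen configuration into $V(N)^{\perp}$ over $V(B)$, and cardinality/saturation of the field sort together with infinite-dimensionality of the vector space to ensure the needed finitely generated extension actually exists inside $N$. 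Checking that the resulting extended $f$ is still partial elementary on the field sort is automatic since we only extended the field sort by elements already controlled in the first (field) case.
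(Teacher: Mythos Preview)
Your treatment of the field case (Case 1) matches the paper's exactly, and your instinct to separate off the case $c\in\mathrm{Span}_{V(M)}(V(A))$ is implicit in your reduction ``we only need to add vectors'' (the paper makes this its Case 2, reducing to Case 1 by adding the coefficients).  For the genuinely new vector, however, your route diverges from the paper's.

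You attempt an orthogonal decomposition $c=w+c_0$ with $c_0\in V(A)^\perp$ and then try to realize a single $q$-value in $V(B)^\perp$.  The paper does something more global: it invokes Lemma~\ref{lem:hyperbolic} to enlarge $V(A)\cup\{c\}$ to a finite-dimensional \emph{hyperbolic} subspace of $V(M)$, does the same for $V(B)$ in $V(N)$ with the same dimension, fixes hyperbolic bases on both sides (absorbing the needed scalars via Case~1), and then uses the tautological isomorphism between equal-dimensional hyperbolic spaces followed by Witt's Lemma (Fact~\ref{Witt}) to correct this isomorphism so that it extends $f$.  No orthogonal decomposition of $c$ is ever taken, and no case analysis on isotropy is needed.

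Your final paragraph about the ``definite case'' is a red herring.  You only need \emph{one} hyperbolic plane inside $V(B)^\perp$, and since $V(B)^\perp$ is infinite dimensional it always contains one (after adding finitely many square roots via Case~1).  On that plane $q$ attains every value in $K(B)$, so $c_0'$ exists with $q(c_0')=f_K(q(c_0))$; there is nothing to match regarding anisotropic pieces or Witt defect, and Lemma~\ref{lem:same_quad_ext} plays no role here.

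The genuine soft spot in your outline is the reduction to $V(A)$ non-degenerate.  Your suggested fixes are circular: Lemmas~\ref{lem:hypext} and~\ref{lem:hyperbolic} both \emph{assume} non-degeneracy, and ``pairing off the radical'' means adding vectors, which is precisely the forth step you are proving.  The paper's approach sidesteps this because the hyperbolic enlargement swallows $V(A)$ whole regardless of whether $V(A)$ itself is non-degenerate (one first enlarges $V(A)\cup\{c\}$ to any non-degenerate finite-dimensional space by pairing with the radical inside $V(M)$, then applies Lemma~\ref{lem:hyperbolic}; the symmetric enlargement on the $N$-side is handled by matching hyperbolic dimensions and Witt, not by extending $f$ vector-by-vector).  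Your direct-realization approach can be salvaged, but you would need to argue directly that for any prescribed linear functional on $V(B)$ and any target $q$-value there is a realizing vector outside $\mathrm{Span}\,V(B)$; this is true but is essentially a small computation you have not supplied.
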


\begin{proof}
    Suppose we are given an isomorphism $f: A \to B$ as in the statement, where $A \subseteq M$ and $B \subseteq N$ are finitely generated substructures.  We can write $f = (f_{K},f_{V})$ where $f_{K} : K(A) \to K(B)$ is an isomorphism of fields (which is additionally assumed to be partial elementary with respect to $\mathrm{Th}_{L_{\mathrm{ring}}}(K(M))$ in the case that $K(M)$ is pseudo-finite) and $f_{V} : V(A) \to V(B)$ is the restriction of $f$ to the vector space sort.  Fix some $c \in M \setminus A$ and we will show that we can extend $f$ to some $f'$ whose domain is $\langle A c \rangle$. We must consider three cases:

    \textbf{Case 1:  }$c \in K(M)$.

    In this case, we have, by Lemma \ref{lem:substructure}, $\langle A,c \rangle \cong (K(A)(c), K(A)(c) \otimes_{K(A)} V(A) \rangle$.  Since $B$ is finitely generated and $K(N)$ is $\aleph_{0}$-saturated, we can find some $d \in K(N)$ such that $K(A)c \equiv K(B)d$ in $K(N)$. This allows us to extend $f_{K}$ to $f'_{K} : K(A)(c) \to K(B)(d)$ defined by mapping $c \mapsto d$. Then we can define a map $f'_{V} : K(A)(c) \otimes_{K(A)} V(A) \to K(B)(d) \otimes_{K(B)} V(B)$ by setting 
    $$
    f'_{V}\left(\sum_{i = 1}^{n}\alpha_{i} \otimes v_{i}\right) = \sum_{i = 1}^{n} f'_{K}(\alpha_{i}) \otimes f_{V}(v_{i})
    $$
    whenever $\alpha_{i} \in K(A)(c)$ and $v_{i} \in V$. This gives the desired isomorphism.

    \textbf{Case 2:  }$c \in \mathrm{Span}_{V(M)}(V(A))$.  

    In this case, we have $c = \sum_{i = 1}^{n} \alpha_{i} v_{i}$ where each $\alpha_{i} \in K(M)$. Iteratively applying Case 1, we may extend $f$ to some $f'$ whose domain is $\langle A, \alpha_{1}, \ldots, \alpha_{n} \rangle$.  Since $c \in \langle A, \alpha_{1}, \ldots, \alpha_{n} \rangle$, this suffices. 

    \textbf{Case 3:  }$c \in V(M) - \mathrm{Span}_{V(M)}(V(A))$.  

    Let $v_{0}, \ldots, v_{k-1}$ be a basis for $V(A)$ and let $w_{i} = f_{V}(v_{i})$. So $w_{0}, \ldots, w_{k-1}$ is also a basis of $V(B)$. By Lemma \ref{lem:hyperbolic}, we can extend $v_{0}, \ldots, v_{k-1}$ to a $K(M)$-basis $v_{0}, \ldots, v_{2n-1}$ of a hyperbolic subspace of $V(M)$ which contains $c$ in its span. Likewise, we can extend $w_{0}, \ldots, w_{k-1}$ to a $K(M)$-basis $w_{0}, \ldots, w_{2n-1}$ of a hyperbolic subspace of $V(M)$. 
    
    Let $(v'_{i})_{i <2n}$ and $(w'_{i})_{i < 2n}$ be hyperbolic bases of $\mathrm{Span}_{K(M)}\{v_{i} : i < 2n\}$ and $\mathrm{Span}_{K(M)}\{w_{i} : i < 2n\}$ respectively.  That is, we ask that these are bases satisfying the following conditions:
    \begin{itemize}
        \item $q(v'_{i}) = q(w'_{i}) = 0$ for all $i < 2n$.
        \item For all $k,\ell < 2n$, 
        $$
        \beta(v'_{k},v'_{\ell}) = \beta(w'_{k},w'_{\ell}) = \left\{ \begin{matrix}
            1 & \text{ if } \{k,\ell\} = \{2i,2i+1\} \text{ for some }i < n \\
            0 & \text{ otherwise}
        \end{matrix}
        \right. 
        $$
    \end{itemize}
   We know for each $i < 2n$, we have 
   $$
   v'_{i} = \sum_{j  < 2n} \alpha_{i,j} v_{j}
   $$
   and, likewise, 
   $$
   w'_{i} = \sum_{j < 2n} \gamma_{i,j} w_{j},
   $$
   with each $\alpha_{i,j}, \gamma_{i,j} \in K(M)$. By Case 1, we may assume that, in fact, all $\alpha_{i,j} \in K(A)$ and all $\gamma_{i,j} \in K(B)$.  It follows, then, that 
   $$
   \mathrm{Span}_{K(A)}\{v_{i} : i < 2n\} = \mathrm{Span}_{K(A)}\{v'_{i} : i < 2n\}
   $$
   and likewise for the $w_{i}$ and $w'_{i}$.  Define $C = \langle K(A), \{v'_{i} : i < 2n\} \rangle$ and $D = \langle K(B), \{w'_{i} : i < 2n\} \rangle$.  Define an isomorphism $h = (h_{K},h_{V}) : C \to D$ by setting $h_{K} : K(A) \to K(B)$ and $h_{V}$ is the map sending $v'_{i} \mapsto w'_{i}$ for $i < 2n$. 

   Now $h_{V}(V(A))$ and $f_{V}(V(A))$ are isomorphic, via the isomorphism $f_{V} \circ h_{V}^{-1}|_{h_{V}(V(A))}$.  Hence, by Witt's Lemma, there is some $\sigma \in \mathrm{Aut}(D/K(B))$ such that $\sigma$ extends $f_{V} \circ h_{V}^{-1}|_{h_{V}(V(A))}$.  Define $h' : C \to D$ by setting $h' = \sigma \circ h$. Note that 
   $$
   h'|_{V(A)} = \sigma \circ h_{V}|_{V(A)} = f_{V} \circ h^{-1} \circ h_{V}|_{V(A)} = f_{V},
   $$
   and also
   $$
   h'|_{K(A)} = \sigma \circ h_{K}|_{K(A)} = f_{K},
   $$
   since $\sigma$ fixes $K(B)$.  This shows $h'$ extends $f$ and has $c$ in its domain.  This completes the proof.
\end{proof}

The following is an immediate corollary:

\begin{cor}
    Let $T$ be a completion of $T^{0}_{O}$ which asserts $V$ is infinite dimensional and which is either pseudo-finite or asserts $K\models \mathrm{ACF}_{2}$.  Then $T$ eliminates quantifiers down to those on the field (so eliminates quantifiers outright in the case that $K$ is algebraically closed).  
\end{cor}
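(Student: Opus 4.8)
The plan is to read the corollary off from the standard back-and-forth criterion for relative quantifier elimination, with Proposition~\ref{prop:orthogonal bnf} supplying the back-and-forth system. I would take $\Delta$ to be the set of $L_{O}$-formulas all of whose quantifiers range over the sort $K$ in the pseudo-finite case, and $\Delta$ to be the set of all quantifier-free $L_{O}$-formulas in the case that $T$ asserts $K \models \mathrm{ACF}_{2}$. In either case $\Delta$ is closed under Boolean combinations and contains the quantifier-free formulas, and ``$T$ eliminates quantifiers down to those on the field'' is exactly the statement that every $L_{O}$-formula is $T$-equivalent to a $\Delta$-formula (which specializes to full quantifier elimination in the $\mathrm{ACF}_{2}$ case, since there $\Delta$ is already the quantifier-free fragment). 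So, by the usual criterion, it suffices to prove: if $M, N \models T$ are $\aleph_{0}$-saturated and $\bar a$ from $M$, $\bar b$ from $N$ are finite tuples (in either sort) realizing the same $\Delta$-type, then $\mathrm{tp}^{M}(\bar a) = \mathrm{tp}^{N}(\bar b)$.

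Given such $M, N, \bar a, \bar b$, I would first note that, since $\Delta$ contains the quantifier-free formulas, $\bar a$ and $\bar b$ have the same quantifier-free type, so $\bar a \mapsto \bar b$ extends to an $L_{O}$-isomorphism $f \colon A \to B$ between the finitely generated substructures $A = \langle \bar a \rangle \subseteq M$ and $B = \langle \bar b \rangle \subseteq N$. The crux is then to check that $f$ lies in the back-and-forth system of Proposition~\ref{prop:orthogonal bnf}. The hypotheses on the two sorts there follow from the hypotheses on $T$: $V(M)$ and $V(N)$ are infinite dimensional, and since $T$ is complete it decides every $L_{\mathrm{ring}}$-sentence interpreted in the field sort, so $K(M) \equiv_{L_{\mathrm{ring}}} K(N)$ and both are models of $\mathrm{ACF}_{2}$ (respectively, elementarily equivalent pseudo-finite fields); moreover $\aleph_{0}$-saturation of $M$ and $N$ passes to the field reducts $K(M)$ and $K(N)$. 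The one substantive point is that $f|_{K(A)}$ is partial elementary as an $L_{\mathrm{ring}}$-map. When $T$ asserts $K \models \mathrm{ACF}_{2}$ this is automatic, since any embedding between subfields of two models of $\mathrm{ACF}_{2}$ is partial elementary (so there the side condition in Proposition~\ref{prop:orthogonal bnf} is vacuous and the relevant system is just the set of all isomorphisms between finitely generated substructures). In general, $K(A)$ is generated over $\mathbb{F}_{2}$ by finitely many elements of the form $t(\bar a)$ with $t$ a $K$-valued $L_{O}$-term --- namely the field coordinates of $\bar a$ together with the values of $q$, of $\beta$, and of the coordinate functions $\pi_{n,i}$ at the vector coordinates of $\bar a$ --- so for any $L_{\mathrm{ring}}$-formula $\rho$ the formula $\rho(t_{1}(\bar x), \dots, t_{k}(\bar x))$ lies in $\Delta$. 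Since $\bar a$ and $\bar b$ have the same $\Delta$-type, $f|_{K(A)}$ then preserves all $L_{\mathrm{ring}}$-formulas, which is exactly what is needed.

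Granting this, Proposition~\ref{prop:orthogonal bnf} places $f$ in a back-and-forth system of partial isomorphisms between the $\aleph_{0}$-saturated structures $M$ and $N$, and the standard argument --- induction on formula complexity, using the extension property of the system to absorb each successive witness --- yields $\mathrm{tp}^{M}(\bar a) = \mathrm{tp}^{N}(\bar b)$, as required. I expect the only genuine subtleties to be bookkeeping: first, the observation underlying the $L_{\mathrm{ring}}$-elementarity of $f|_{K(A)}$, namely that the field part of a finitely generated substructure of an orthogonal space is generated by finitely many term-definable elements; and second, the degenerate possibility in the pseudo-finite case that $K$ is a \emph{finite} field, which is not literally within the scope of Proposition~\ref{prop:orthogonal bnf} as stated --- there one runs the same back-and-forth, the pseudo-finiteness of the field having been used only through completeness of $T$ and the existence of a unique quadratic extension, which a finite field of characteristic $2$ also has (alternatively, one appeals to the classical quantifier-elimination argument in the style of \cite[Lemma~2.2.8]{cherlin2003finite}).
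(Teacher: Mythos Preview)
Your proposal is correct and is exactly the standard unpacking of what the paper leaves implicit: the paper gives no argument beyond declaring the statement ``an immediate corollary'' of Proposition~\ref{prop:orthogonal bnf}, and what you have written is precisely the routine deduction of (relative) quantifier elimination from a back-and-forth system between $\aleph_{0}$-saturated models. Your verification that $f|_{K(A)}$ is partial elementary---via the observation that $K(A)$ is generated by finitely many $K$-valued terms in $\bar a$---is the only point with any content, and you have it right; your flagging of the finite-field edge case in the pseudo-finite situation is also a legitimate observation that the paper glosses over.
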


From elimination of quantifiers, we obtain stable embeddedness of the field sort:

\begin{cor} \label{cor:orthogonalstableembeddedness}
    Let $M = (V,K,q) \models T^{0}_{O}$ with $K$ an algebraically closed or pseudo-finite field of characteristic $2$ and $V$ infinite dimensional. Suppose $C$ is a substructure of $M$ and $a$ and $b$ are tuples from the field sort with $a \equiv_{K(C)} b$ in $\mathrm{Th}(K)$. Then $a \equiv_{C} b$ in $M$. 
\end{cor}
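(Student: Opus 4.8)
This is the standard consequence of quantifier elimination relative to the field sort, and I would derive it from the preceding corollary, which gives that $T := \mathrm{Th}(M)$ eliminates quantifiers down to those on the field. (That $\mathrm{Th}(M)$ is covered by that corollary is immediate when $K \models \mathrm{ACF}_{2}$; when $K \models \mathrm{PSF}_{2}$ it follows from Proposition \ref{prop:orthogonal bnf}, which shows $\mathrm{Th}(M)$ is the corresponding pseudo-finite completion.) The goal is then to show that every $L_{O}$-formula $\varphi(x;\bar{c})$, with $x$ a tuple of field-sort variables and $\bar{c}$ a tuple of parameters from $C$, is $T$-equivalent to an $L_{\mathrm{ring}}$-formula $\varphi^{*}(x;\bar{e})$ in which all quantifiers range over the field sort and $\bar{e}$ is a tuple from $K(C)$. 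Granting this, $M \models \varphi(a;\bar{c})$ iff $K \models \varphi^{*}(a;\bar{e})$ (all quantifiers being field-sort, so evaluating $\varphi^{*}$ in $M$ is the same as evaluating it in $K = K(M)$), and similarly for $b$; since $\bar{e} \subseteq K(C)$ and $a \equiv_{K(C)} b$ in $\mathrm{Th}(K)$, we get $K \models \varphi^{*}(a;\bar{e}) \leftrightarrow \varphi^{*}(b;\bar{e})$, hence $a \equiv_{C} b$ in $M$.

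To manufacture $\varphi^{*}$: by elimination of quantifiers down to the field we may take $\varphi$ to have all of its quantifiers over the field sort, so it is a Boolean combination of formulas $\exists \bar{y}\, \chi(x,\bar{y};\bar{c})$ with $\bar{y}$ field-sort and $\chi$ quantifier-free; it then suffices to rewrite each atomic $L_{O}$-subformula of $\chi$ — which involves only the field-sort variables $x,\bar{y}$ and the parameters $\bar{c}$ — as an $L_{\mathrm{ring}}$-formula, possibly with additional field-sort quantifiers, over $K(C)$. After replacing the vector-sort entries of $\bar{c}$ by a linearly independent tuple $c_{1},\dots,c_{r}$ spanning the same subspace (legitimate since $C$ is closed under the coordinate functions, which express the old entries as $K(C)$-combinations of the new ones), note that every vector-sort term occurring in $\chi$ has the form $\sum_{l \le r} s_{l}\cdot c_{l}$ with $s_{l}$ a field-sort term in $x,\bar{y}$ and the field-sort parameters, simply because there are no vector-sort variables. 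Hence: an equation between vector-sort terms becomes the conjunction of the coordinatewise equations $s_{l} = s'_{l}$; an instance $\theta_{n}(t_{1},\dots,t_{n})$ becomes the $L_{\mathrm{ring}}$-expressible assertion that the coefficient matrix has rank $n$; $q$ applied to $\sum_{l} s_{l}c_{l}$ becomes $\sum_{l} s_{l}^{2} q(c_{l}) + \sum_{l<l'} s_{l}s_{l'}\beta(c_{l},c_{l'})$, a ring term over $K(C)$ since $q(c_{l}),\beta(c_{l},c_{l'}) \in K(C)$; and a coordinate function $\pi_{n,i}$ applied to vector-sort terms $w_{1},\dots,w_{n+1}$ is handled by introducing a fresh field-sort variable $u$ together with the $L_{\mathrm{ring}}$-definable relations (a rank condition, plus Cramer's rule when $w_{1},\dots,w_{n}$ are independent and $w_{n+1}$ lies in their span) that pin down its value as a function of the coefficients $s_{k,l}$ — exactly the computation carried out in the proof of Lemma \ref{lem:substructure}. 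Rewriting all atomic subformulas in this way, with the $s_{l}$ themselves first rewritten by induction on term height, and pulling the newly introduced field-sort quantifiers out front, produces the required $\varphi^{*}$.

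The only mildly delicate point is the occurrence of the coordinate functions inside terms: since $\pi_{n,i}$ is governed by a linear-independence alternative, one cannot simply substitute a single ring term for $\pi_{n,i}(w_{1},\dots,w_{n+1})$, but must go through the case split and the matrix inversion. This is, however, precisely the linear algebra already made explicit in the proof of Lemma \ref{lem:substructure}, so it introduces no genuine obstacle, and everything else is bookkeeping.
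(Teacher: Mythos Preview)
Your argument is correct, but it takes a different route from the paper. You proceed syntactically: starting from relative quantifier elimination, you rewrite each $L_{O}$-formula with field-sort free variables and parameters from $C$ as an $L_{\mathrm{ring}}$-formula over $K(C)$, by expanding vector-sort terms against a fixed basis of the span of the vector parameters and then unwinding $q$, $\theta_{n}$, and $\pi_{n,i}$ in terms of the coefficient functions. The paper instead argues semantically: it passes to $K_{0} = \mathrm{acl}(aK(C))$ and $K_{1} = \mathrm{acl}(bK(C))$, invokes Lemma~\ref{lem:substructure} to identify $\langle K_{i},C\rangle$ with the extension of scalars $(K_{i}, K_{i}\otimes_{K(C)} V(C), q')$, takes a partial elementary field isomorphism $\sigma_{K}:K_{0}\to K_{1}$ sending $a\mapsto b$, and extends it to a $C$-isomorphism $\tilde{\sigma}$ of these substructures by $\alpha\otimes c \mapsto \sigma_{K}(\alpha)\otimes c$; relative QE then gives $a\equiv_{C} b$. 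The paper's approach is shorter and avoids the term-by-term bookkeeping (in particular the case analysis for $\pi_{n,i}$), at the cost of relying on the structural description of $\langle K',C\rangle$ already established in Lemma~\ref{lem:substructure}. Your approach has the virtue of being self-contained and of making the translation to the field sort completely explicit, which is sometimes useful when one wants a concrete bound on the complexity of the resulting ring formula.
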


\begin{proof}
    Let $K_{0} = \mathrm{acl}(aK(C))$ and $K_{1} = \mathrm{acl}(bK(C))$, with algebraic closure here computed in $\mathrm{Th}(K(\mathbb{M}))$. By Lemma \ref{lem:substructure}, $\langle K_{i},C \rangle = (K_{i},K_{i} \otimes_{K(C)} C)$ for $i = 0,1$.  Let $\sigma_{K} : K_{0} \to K_{1}$ be a partial elementary isomorphism of $K_{0}$ and $K_{1}$ over $K(C)$ with $\sigma_{K}(a) = b$. Then define an isomorphism $\sigma_{V} : K_{0} \otimes_{K(C)} C \to K_{1} \otimes_{K(C)} C$ to be the unique map that sends $\alpha \otimes c\mapsto \sigma(\alpha) \otimes c$ for all $\alpha \in K_{0}$ and $c \in C$. Then $\tilde{\sigma} = (\sigma_{K},\sigma_{V})$ defines a $C$-isomorphism from $\langle K_{0},C \rangle$ to $\langle K_{1},C \rangle$ which is partial elementary in the field sort. Hence by quantifier elimination, we have $a \equiv_{C} b$. 
\end{proof}

\begin{prop} \label{prop:orthclass}
    Suppose $M \models T^{0}_{O}$ is pseudo-finite.  Then $\mathrm{Th}(M)$ is axiomatized, modulo $T^{0}_{O}$, by one of the following:
    \begin{enumerate}
        \item $K$ is a finite field of cardinality $2^{k}$ and $V$ has infinite dimension.  
        \item $\mathrm{Th}(K)$ is a completion of the theory of pseudo-finite fields and one of the following holds:
        \begin{enumerate}
            \item $V$ is of finite dimension $2n$.  In this case, the theory also specifies the Witt defect of $V$ as either $0$ or $1$. 
            \item $V$ is of infinite dimension. 
        \end{enumerate}
    \end{enumerate}
    Moreover, all of these possibilities are realized. 
\end{prop}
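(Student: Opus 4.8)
The plan is to split into cases according to $\mathrm{Th}(K(M))$ and $\dim_{K(M)} V(M)$, prove completeness of each of the listed theories, and then realize each one by an explicit ultraproduct. Since $M$ is pseudo-finite it is infinite and $M \equiv \prod_{\mathcal U} M_i$ for some ultrafilter $\mathcal U$ and finite orthogonal spaces $M_i = (V_i, K_i, q_i)$. Taking the reduct to the field sort, which commutes with ultraproducts, $K(M) \equiv_{L_{\mathrm{ring}}} \prod_{\mathcal U} K_i$, an ultraproduct of finite fields of characteristic $2$; by Ax's theorem this is a finite field $\mathbb F_{2^k}$ when the $|K_i|$ are bounded along $\mathcal U$, and a pseudo-finite field of characteristic $2$ otherwise. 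If $K(M)$ is finite then $V(M)$ is infinite-dimensional, since otherwise $M$ would be finite, and we are in case (1). Otherwise $\mathrm{Th}(K(M))$ is a completion of $\mathrm{PSF}_2$ and $V(M)$ is either infinite-dimensional (case 2(b)) or finite-dimensional, of even dimension $2n$ by Fact \ref{fact:Aschbacher}(5) (case 2(a)). It remains to check that in each case the recorded data determine $\mathrm{Th}(M)$ and that each case occurs.

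In the infinite-dimensional cases, completeness is essentially Proposition \ref{prop:orthogonal bnf}. Let $M, N$ be two models with infinite-dimensional vector-space sort and the same field theory---a fixed completion of $\mathrm{PSF}_2$ in case 2(b), or $\mathrm{Th}(\mathbb F_{2^k})$ in case (1)---and pass to $\aleph_1$-saturated elementary extensions $M^{*}, N^{*}$, so that $K(M^{*})$ and $K(N^{*})$ are $\aleph_0$-saturated. In case 2(b), Proposition \ref{prop:orthogonal bnf} applies directly: its back-and-forth system is nonempty, since it contains the identity on the substructure generated by $\emptyset$ (whose field part is partial elementary because the elements of $\mathbb F_2$ are $\emptyset$-definable), whence $M^{*} \equiv N^{*}$ and so $M \equiv N$. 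In case (1) the same back-and-forth works with only cosmetic changes: Case 1 of the proof of Proposition \ref{prop:orthogonal bnf}, which adjoins a new field element, is now vacuous; $\mathbb F_{2^k}$ is perfect with a unique quadratic Galois extension, so Lemma \ref{lem:hyperbolic} still applies; and the field-part hypothesis is automatic over a finite field. (Alternatively, case (1) follows from quantifier elimination over $\mathbb F_{2^k}$ named by constants, together with the fact that every finite-dimensional non-degenerate quadratic $\mathbb F_{2^k}$-space embeds into $V(M)$, which one checks by iterated Witt decomposition using that the anisotropic $2$-dimensional form $N$ over $\mathbb F_{2^k}$ satisfies $N \perp N \cong \mathbb H \perp \mathbb H$, so that an infinite-dimensional non-degenerate space absorbs a copy of $N$ regardless of the shape of its Witt decomposition.)

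For case 2(a), the point is that a non-degenerate $(V(M), q)$ of dimension $2n$ over a pseudo-finite field of characteristic $2$ is determined up to isometry by its Witt defect, which is $0$ or $1$. Let $A \subseteq M$ be the substructure generated by a basis of $V(M)$, so that $K(A)$ is finitely generated, $\dim_{K(A)} V(A) = 2n$, and $\beta$ is non-degenerate on $V(A)$. By Lemma \ref{lem:hypext}(2) there is a finitely generated $A' \supseteq A$ with $\dim_{K(A')} V(A') = 2n$ over which $(V(A'), q)$ is either hyperbolic or an orthogonal direct sum of $n-1$ hyperbolic planes and a $2$-dimensional definite space carrying a vector of $q$-value $1$. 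By Lemma \ref{lem:substructure}, $(V(M), q)$ is the extension of scalars of $(V(A'), q)$ to $K(M)$, hence of the same shape over $K(M)$; and a $2$-dimensional definite space with a $q$-value $1$ vector corresponds to an Artin--Schreier polynomial $t^{2} + t + b$ with $b \notin \wp(K(M))$, so, as $[K(M):\wp(K(M))] = 2$, all such are isometric by Lemma \ref{lem:same_quad_ext}. Thus $(V(M), q)$ is isometric to the orthogonal sum $\mathbb H^{\perp n}$ of $n$ hyperbolic planes (Witt defect $0$) or to $\mathbb H^{\perp (n-1)} \perp N$ (Witt defect $1$), and these are non-isometric. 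Consequently $M$ is, relative to its field, isomorphic to a structure interpretable in $K(M)$, and given another model $N$ of the same theory one may pass to elementary extensions of $M$ and $N$ with a common $\aleph_1$-saturated field, over which the two forms are copies of the same standard form; hence $\mathrm{Th}(M)$ depends only on $\mathrm{Th}(K(M))$, $n$, and the Witt defect.

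To see that each possibility is realized, take in each case an ultraproduct of finite orthogonal spaces over a nonprincipal ultrafilter $\mathcal U$ on $\omega$: for (1), $\prod_{\mathcal U}\bigl((\mathbb F_{2^k})^{2i}, \mathbb F_{2^k}, q_i\bigr)$ with each $q_i$ non-degenerate; for 2(a), $\prod_{\mathcal U}\bigl((\mathbb F_{2^{m_i}})^{2n}, \mathbb F_{2^{m_i}}, q_i\bigr)$ with $m_i \to \infty$ and $\mathcal U$ chosen so that $\prod_{\mathcal U} \mathbb F_{2^{m_i}}$ realizes the prescribed completion of $\mathrm{PSF}_2$ (possible, since every such completion is the theory of an infinite ultraproduct of finite fields) and each $q_i$ of Witt defect $d$; for 2(b), $\prod_{\mathcal U}\bigl((\mathbb F_{2^{m_i}})^{2i}, \mathbb F_{2^{m_i}}, q_i\bigr)$ with the $m_i$ as in 2(a) and each $q_i$ non-degenerate. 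In each case the ultraproduct is infinite, hence pseudo-finite, and---since the field theory, the (in)finiteness and exact value of the dimension, and the Witt defect are all first-order and preserved in ultraproducts---it satisfies the stated axioms, so by the completeness just established its theory is exactly the listed one. The main obstacle is the isometry classification behind case 2(a): deducing from Lemmas \ref{lem:hypext}, \ref{lem:substructure}, and \ref{lem:same_quad_ext} that there are exactly two isometry types of non-degenerate quadratic space in each even dimension over a pseudo-finite field of characteristic $2$, and that the Witt defect is the first-order invariant distinguishing them; the remaining ingredients are direct appeals to Proposition \ref{prop:orthogonal bnf} or routine ultraproduct bookkeeping.
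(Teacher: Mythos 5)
Your proof is correct and follows the same overall architecture as the paper's (case split by $\mathrm{Th}(K)$ and dimension, completeness via Proposition~\ref{prop:orthogonal bnf} in the infinite-dimensional cases and via an isometry classification plus a Keisler--Shelah argument in the finite-dimensional case, then an ultraproduct realization). The one place you take a genuinely different route is the classification behind case~2(a): the paper simply invokes the analogue of \cite[Proposition 21.2]{aschbacher2000finite} for perfect fields with a unique quadratic Galois extension, whereas you rederive the two-isometry-type classification from the paper's own Lemmas~\ref{lem:hypext}, \ref{lem:substructure}, and~\ref{lem:same_quad_ext}. This is more self-contained and makes the role of the Witt defect transparent; the cost is an extra page of bookkeeping and a couple of spots where you could be sharper. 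Likewise, you handle case~(1) directly through the back-and-forth rather than delegating to \cite{cherlin2003finite} as the paper does, and you realize the completions by explicit ultraproducts rather than by orthogonal sums of the hyperbolic plane and the norm form $(F,N^F_K)$; both are fine and essentially interchangeable with the paper's choices.

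Two small imprecisions worth flagging, neither fatal. First, when you say the extension of scalars of $(V(A'),q)$ to $K(M)$ is ``of the same shape over $K(M)$,'' this is not literally true: the $2$-dimensional definite summand over $K(A')$ (with basis $x,y$, $q(x)=1$, $q(y)=b$) may become hyperbolic over $K(M)$ if $b$ falls into $\wp(K(M))\setminus\wp(K(A'))$. Your conclusion survives\textemdash one lands in the hyperbolic case instead\textemdash but the phrasing should be ``of one of the two shapes.'' Second, you assert that $\mathbb{H}^{\perp n}$ and $\mathbb{H}^{\perp(n-1)}\perp N$ are non-isometric without comment; one line (comparing dimensions of maximal totally singular subspaces, or invoking Witt cancellation) would close this. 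Finally, ``pass to elementary extensions of $M$ and $N$ with a common $\aleph_1$-saturated field'' is exactly the Keisler--Shelah step the paper spells out; naming it would make the dependence explicit.
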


\begin{proof}
    First, we consider the case that $\mathrm{dim}(V)$ is finite. By \cite[Proposition 19.17]{aschbacher2000finite}, it follows that the dimension is even, so we will assume it has dimension $2n$.  It is easy to see that, in this case, $M = (V,K,q)$ is interpretable in $K$, hence is pseudo-finite. Consider the following sentence $\phi_{n}$:
\begin{eqnarray*}
(\exists v_{1}, \ldots, v _{2n} \in V)\Large[\theta_{2n}(v_{1},\ldots, v_{2n}) &\wedge& \bigwedge_{i=1}^{2n} q(v_{i}) = 0 \\
&\wedge& \bigwedge_{i = 1}^{n} \beta(v_{2i-1},v_{2i}) = 1 \\
&\wedge& \bigwedge_{i = 1}^{n} \bigwedge_{k \not\in \{2i-1,2i\}} \beta(v_{2i-1},v_{k}) = \beta(v_{2i},v_{k}) = 0 \Large].
\end{eqnarray*}
Note that $M \models \phi_{n}$ if and only if $V$ has a decomposition into an orthogonal direct sum of $n$ hyperbolic planes, which entails that the Witt defect is $0$.  So we need to argue that $T_{O}^{0}$, together with the complete theory of $K$, the sentence that $\mathrm{dim}(V) = 2n$, and the specification of the truth value of $\phi_{n}$ gives a complete theory.  If we call this theory $T$ and take $M_{1},M_{2} \models T$, by taking ultrapowers and applying the Keisler-Shelah theorem \cite[Theorem 6.1.15]{keisler1990model}, we can assume that the fields are isomorphic.  So we have two $2n$ dimensional orthogonal spaces over the same perfect field with the same Witt defect, hence are isomorphic by \cite[Proposition 21.2]{aschbacher2000finite} (we note that the statement in \cite{aschbacher2000finite} is for finite fields, but the proof goes through unchanged for perfect fields with a unique quadratic Galois extension). 

Now we consider the infinite dimensional case.  The case when the field is finite has already been considered by \cite{cherlin2003finite}, and in any case is strictly easier than the case when the field is infinite, since in this case the field cannot grow in extensions. So we let $T'$ be theory axiomatized, modulo $T^{0}_{O}$, by specifying the complete theory of $K$ (a pseudo-finite field) and that $\mathrm{dim}(V) = \infty$.  This is complete by Proposition \ref{prop:orthogonal bnf}.  

To see that every possibility is realized, suppose $K$ is a finite or pseudo-finite field of characteristic $2$.  Then there is a unique quadratic Galois extension $F/K$.  We have seen that $F$, viewed as a $2$-dimensional $K$-vector space, together with the norm map $N^{F}_{K}$, is an orthogonal space with Witt defect $1$. Additionally, we know there is a hyperbolic plane $(H,q)$ over $K$.   By taking an arbitrary finite number of orthogonal direct sums of $H$ or an arbitrary finite number of orthogonal direct sums of $H$ with one $(F,N^{F}_{K})$ summand, we may obtain all the possibilities described in (1) and (2)(a).  Possibility (2)(b) can be obtained by taking an infinite orthogonal direct sum of copies of $H$. 
\end{proof}

\subsection{Approximation and relative homogeneity}

Witt's Lemma, applied as in the proof of Proposition \ref{prop:orthogonal bnf}, may be used to also give $\aleph_{0}$-categoricity, homogeneity, and smooth approximability relative to the field:

\begin{thm} \label{thm:relcat}
 Let $T$ be either $T_{O}$ or a pseudo-finite completion of $T^{0}_{O}$ with $V$ infinite dimensional. 
\begin{enumerate}
    \item (Relative $\aleph_{0}$-categoricity) Suppose $M, N \models T$ are orthogonal spaces with $K(M) \cong K(N)$. If both $V(M)$ and $V(N)$ are $\aleph_{0}$-dimensional, then $M \cong N$.
    \item (Relative homogeneity) Suppose $M \models T$ with $V(M)$ $\aleph_{0}$-dimensional. If $f : A \to B$ is an isomorphism of two substructures of $M$ which are finitely generated over $K(M)$, then there is $\sigma \in \mathrm{Aut}(M/K(M))$ extending $f$. 
    \item (Relative smooth approximability) Suppose $M \models T$ with $V(M)$ $\aleph_{0}$-dimensional. For each $i < \omega$, choose substructures $M_{i} \subseteq M$ a substructure with $K(M_{i})  = K(M)$ and with $V(M_{i})$ a $2i$-dimensional non-degenerate subspace of $V(M)$ such that $M_{i} \subseteq M_{i+1}$ and $M = \bigcup_{i < \omega} M_{i}$. Then, for each $i$ and for all finite tuples $a,b \in M_{i}$, $a$ and $b$ are in the same $\mathrm{Aut}(M/K(M))$ orbit if and only if they are in the same $\mathrm{Aut}_{\{M_{i}\}}(M/K(M))$ orbit. 
\end{enumerate}
Here $\mathrm{Aut}_{\{M_{i}\}}(M/K(M))$ refers to the elements of $\mathrm{Aut}(M/K(M))$ that fix $M_{i}$ setwise. 
\end{thm}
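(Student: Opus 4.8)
The idea is to rerun the mechanism behind Proposition~\ref{prop:orthogonal bnf} --- Witt's Lemma (Fact~\ref{Witt}) together with the hyperbolic-extension Lemma~\ref{lem:hyperbolic} --- but now with the field sort held fixed. Two preliminary observations. First, if $\sigma$ is a $K(M)$-linear bijection of $V(M)$ with $q\circ\sigma=q$, then $(\mathrm{id}_{K(M)},\sigma)$ is an automorphism of $M$ fixing $K(M)$: it preserves the group and scalar operations, the form $q$ (hence $\beta$), and the linear-independence predicates and coordinate functions, all of which are determined by the $K(M)$-linear structure; all automorphisms we produce will have this shape. Second, in every case at hand $K(M)$ is either algebraically closed of characteristic $2$ or a perfect field of characteristic $2$ with a unique quadratic Galois extension, and $V(M)$ is non-degenerate, so: (a) every finite-dimensional subspace of $V(M)$ lies in a finite-dimensional non-degenerate subspace (pair off a basis of the radical with partners); and (b) Lemma~\ref{lem:hyperbolic} lets us enlarge any finite-dimensional non-degenerate subspace of even dimension, sitting inside an infinite-dimensional $V(M)$, to a hyperbolic subspace of dimension two larger (over $\mathrm{ACF}_2$ this is immediate from Lemma~\ref{lem:hypext}(1)).

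\textbf{Part (1).} Fix a field isomorphism $\varphi\colon K(M)\to K(N)$. Since $V(M),V(N)$ are $\aleph_0$-dimensional, fix countable $K$-bases of each; we build a $\varphi$-semilinear isometry $g\colon V(M)\to V(N)$ by a back-and-forth of length $\omega$ that puts each basis vector into the domain (resp.\ range), so that $(\varphi,g)$ is the desired isomorphism. At a typical stage we have a $\varphi$-semilinear isometry $g\colon U\to U'$ between finite-dimensional \emph{non-degenerate} subspaces $U\subseteq V(M)$, $U'\subseteq V(N)$, and a vector $c\in V(M)$ to adjoin. Decomposing $c=c_1+c_2$ along $V(M)=U\oplus U^{\perp}$ we may assume $c_2\neq 0$; picking $\bar c\in U^{\perp}$ with $\beta(c_2,\bar c)=1$, the subspace $U\oplus\langle c_2,\bar c\rangle$ is non-degenerate and contains $c$, and iterating Lemma~\ref{lem:hyperbolic} inside $V(M)$ we enlarge it to a hyperbolic subspace $\tilde U\subseteq V(M)$ of some even dimension $2m$. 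Iterating Lemma~\ref{lem:hyperbolic} inside the infinite-dimensional $V(N)$, enlarge $U'$ to a hyperbolic $\tilde U'\subseteq V(N)$ of the \emph{same} dimension $2m$. Since hyperbolic spaces of equal finite dimension are isometric, there is a $\varphi$-semilinear isometry $h_0\colon\tilde U\to\tilde U'$; then $h_0^{-1}\circ g$ is a $K(M)$-linear isometry of $U$ onto a subspace of $\tilde U$, which extends to an isometry of $\tilde U$ by Witt's Lemma, and composing again with $h_0$ gives a $\varphi$-semilinear isometry $\tilde U\to\tilde U'$ extending $g$ with $c$ in its domain. Running the back-and-forth and taking the union yields the isomorphism. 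Keeping $\varphi$ fixed from the start is exactly what lets us dispense with saturation of the field sort: the analogues of Cases 1 and 2 of Proposition~\ref{prop:orthogonal bnf} are trivial, and only the analogue of Case 3 occurs.

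\textbf{Parts (2) and (3).} These need no back-and-forth. For (2): the isomorphism $f\colon A\to B$, being over $K(M)$, restricts to a $K(M)$-linear isometry of the finite-dimensional $V(A)$ onto $V(B)$; using (a) choose a finite-dimensional non-degenerate $W\subseteq V(M)$ with $V(A)+V(B)\subseteq W$, extend $f$ to an isometry $\rho$ of $W$ by Witt's Lemma, and set $\sigma=\rho\oplus\mathrm{id}_{W^{\perp}}$ on $V(M)=W\oplus W^{\perp}$; by the first observation $\sigma$ yields an element of $\mathrm{Aut}(M/K(M))$ extending $f$. For (3) the inclusion $\mathrm{Aut}_{\{M_i\}}(M/K(M))\subseteq\mathrm{Aut}(M/K(M))$ gives one direction. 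Conversely, if $\tau\in\mathrm{Aut}(M/K(M))$ and $\tau(a)=b$ with $a,b$ finite tuples from $M_i$, then, $\tau$ fixing $K(M)$, the field entries of $a$ and $b$ coincide and $\tau$ restricts to a $K(M)$-linear isometry of the span of the vector entries of $a$ onto that of $b$, both subspaces of the non-degenerate finite-dimensional $V(M_i)$; extend it to an isometry $\rho$ of $V(M_i)$ by Witt's Lemma and put $\sigma=\rho\oplus\mathrm{id}_{V(M_i)^{\perp}}$ on $V(M)=V(M_i)\oplus V(M_i)^{\perp}$. Then $\sigma\in\mathrm{Aut}(M/K(M))$, $\sigma$ stabilizes $M_i$ setwise, and $\sigma(a)=b$.

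\textbf{The main obstacle.} The one genuinely delicate point is the dimension-matching step in (1): we must be able to enlarge $U$ and $U'$ to hyperbolic subspaces of \emph{equal} dimension inside $V(M)$ and $V(N)$ respectively, even when $V(M)$ or $V(N)$ is not itself a direct sum of hyperbolic planes --- that is, when the field is (finite or infinite) pseudo-finite and a $2$-dimensional anisotropic ``definite'' piece $D$ is present. This is precisely what Lemma~\ref{lem:hyperbolic} delivers (such a definite piece is always absorbed after enlarging by two dimensions), and iterating it reaches any sufficiently large even dimension; behind the scenes this uses that $D\oplus D$ is hyperbolic, so that an $\aleph_0$-dimensional orthogonal sum of hyperbolic planes contains an isometric copy of $D$. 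Over $\mathrm{ACF}_2$ the issue evaporates, since there every even-dimensional non-degenerate orthogonal space is hyperbolic. Everything else --- the two linear-algebra facts above, and that orthogonal direct sums of isometries along an orthogonal decomposition are again isometries --- is routine.
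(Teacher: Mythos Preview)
Your proof is correct. For (1) you follow essentially the paper's route: a back-and-forth over the fixed field using Lemma~\ref{lem:hyperbolic} to pass to hyperbolic envelopes of matched dimension, then Witt's Lemma to correct the partial map; your treatment is in fact a bit more careful than the paper's in maintaining the invariant that the current subspace is non-degenerate before invoking Lemma~\ref{lem:hyperbolic}.

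For (2) and (3) you take a genuinely different and more elementary path. The paper deduces (2) from (1) (the back-and-forth, started from $f$ and run with $M=N$, yields the automorphism) and then deduces (3) by first extending the partial map to an isometry of $M_i$ via Witt and then invoking (2) to extend to $M$. You instead bypass the back-and-forth entirely: for (2) you pick a finite-dimensional non-degenerate $W\supseteq V(A)+V(B)$, apply Witt once inside $W$, and extend by the identity on $W^{\perp}$ using $V(M)=W\oplus W^{\perp}$; for (3) you do the same with $W=V(M_i)$, which makes the setwise stabilization of $M_i$ immediate. Your approach is shorter and makes the stabilizer property in (3) transparent, at the small cost of invoking Fact~\ref{fact:Aschbacher}(3) explicitly; the paper's chaining, on the other hand, packages (2) and (3) as formal corollaries of the back-and-forth already established in (1), which is conceptually tidy but less constructive.
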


\begin{proof}
    (1) Without loss of generality, we may assume $K(M)$ and $K(N)$ are equal to some field $K_{*}$. Since $V(M)$ and $V(N)$ are $\aleph_{0}$-dimensional, we know that, in order to construct an isomorphism from $M$ to $N$, it suffices to show that if $f: A \to B$ is an isomorphism of finite dimensional $K_{*}$-subspaces of $V(M)$ and $V(N)$ respectively, and $v \in V(M) \setminus A$, then there are finite dimensional subspaces $A' \supseteq A$ and $B' \supseteq B$ of $V(M)$ and $V(N)$ with $v \in A'$ and an isomorphism $f' :A' \to B'$ extending $f$. As in Case 3 in the proof of Proposition \ref{prop:orthogonal bnf}, we let $A'$ be a finite dimensional hyperbolic subspace of $V(M)$ containing $v$ and $A$ and let $B'$ be a hyperbolic subspace of $V(N)$ containing $B$ of the same dimension as $A'$. Then there is a $K_{*}$-isomorphism $g: A' \to B'$.  Then there is an isomorphism $h: g(A) \to B$ given by $h = f \circ g^{-1}$.  By Witt's Lemma (Fact \ref{Witt}), $h$ extends to a $K_{*}$-automorphism $\sigma$ of $B'$. Then $f' = \sigma \circ g : A' \to B'$ is an isomorphism from $A'$ to $B'$ and, for all $a \in A$, we have 
    $$
    f'(a) = \sigma(g(a)) = (f \circ g \circ g^{-1})(a) = f(a),
    $$
    so $f'|_{A} = f$ as desired. 

    (2) This follows from (1), taking $M = N$, since substructures of $M$ that are finitely generated over $K(M)$ correspond to finite dimensional $K(M)$-subspaces of $V(M)$. 

    (3) Suppose $a$ and $b$ are finite tuples from $M_{i}$ which are in the same $\mathrm{Aut}(M/K(M))$-orbit. Let $A$ and $B$ be the substructures of $M_{i}$ generated over $K(M)$ by $a$ and $b$ respectively.  The $K(M)$-isomorphism $A \to B$ determined by mapping $a \mapsto b$ extends to a $K(M)$-automorphism $\sigma$ of $M_{i}$ by Witt's Lemma.  Then $\sigma$ extends to some $\tilde{\sigma} \in \mathrm{Aut}_{\{M_{i}\}}(M/K(M))$ by (2).  This shows that $a$ and $b$ are in the same $\mathrm{Aut}_{\{M_{i}\}}(M/K(M))$ orbit. 
\end{proof}

\begin{rem} \label{rem:slight}
    The proof of Theorem \ref{thm:relcat}(1), actually establishes the slightly stronger result:  suppose $M, N \models T^{0}_{O}$ are orthogonal spaces with $K(M) \cong K(N)$ perfect fields of characteristic $2$. If both $V(M)$ and $V(N)$ are $\aleph_{0}$-dimensional, then $M \cong N$.  
\end{rem}

\subsection{The Granger example in odd characteristic over pseudo-finite fields}

Although this is not the main focus of the paper, we note that the argument of Proposition \ref{prop:orthogonal bnf} goes through essentially unchanged for the two-sorted theory of an infinite dimensional vector space over a pseudo-finite field of characteristic not $2$ equipped with a symmetric or alternating bilinear form.  As this has not appeared explicitly in the literature, we sketch the argument.  It will be used in Section \ref{section:neostability} to show that this theory is NSOP$_{1}$. For this subsection, let $L$ denote the language $L_{V,K}$ together with a binary function symbol $\beta: V^{2} \to K$ (which may be equivalently described as the sublanguage of $L_{O}$ obtained by forgetting $q$).  Following the notation of \cite{granger1999stability}, for a theory $T_{0}$ of fields of odd characteristic, the $L$-theories $_{S}T^{T_{0}}_{\infty}$ and $_{A}T^{T_{0}}_{\infty}$ are the two-sorted theories of infinite dimensional vector spaces over a field $K \models T_{0}$ equipped with a non-degenerate symmetric or alternating bilinear form, respectively. 

\begin{prop}
    Suppose $M, N$ are both models of $_{S}T^{PSF}_{\infty}$ or $_{A}T^{PSF}_{\infty}$ with $K(M),K(N)$ $\aleph_{0}$-saturated and elementarily equivalent pseudo-finite fields of characteristic not $2$ (as $L_{\mathrm{ring}}$-structures) and with both $V(M)$ and $V(N)$ infinite dimensional.  Then the set of isomorphisms $f:A \to B$ with $A$ and $B$ between finitely generated substructures of $M$ and $N$ such that the induced map $f|_{K(A)}$ is partial elementary (as an $L_{\mathrm{ring}}$-embedding) has the back and forth property. 
\end{prop}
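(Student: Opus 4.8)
The plan is to run the three-case back-and-forth of the proof of Proposition~\ref{prop:orthogonal bnf} essentially verbatim, with the bilinear form $\beta$ in place of the quadratic form $q$. Two of the inputs used there transfer immediately. The analogue of Lemma~\ref{lem:substructure} holds with the same proof: a substructure of a model is an extension of scalars, the bilinearity of the extended form playing the role of the extension-of-scalars formula for $q$, and the matrix argument with the coordinate functions being unchanged. And the analogue of Witt's Lemma (Fact~\ref{Witt}) is the classical Witt extension theorem, valid over \emph{any} field for non-degenerate alternating forms and for non-degenerate symmetric bilinear forms in characteristic $\neq 2$; this is the one place the hypothesis $\mathrm{char}\neq 2$ is genuinely used, since it is what keeps the symmetric case inside classical Witt theory.

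The substantive point is the analogue of Lemma~\ref{lem:hyperbolic}: given a finitely generated $A \subseteq M$ with $\beta$ non-degenerate on $V(A)$, with $V(M)$ infinite-dimensional, and a vector $c \in V(M)$, there is a finitely generated $A' \supseteq A$ with $c \in V(A')$, with $\beta$ non-degenerate on $V(A')$, and with $V(A')$ \emph{split} over $K(A')$ (possessing a basis witnessing maximal Witt index). For the alternating theories this is essentially automatic: every non-degenerate alternating space is symplectic, hence split, so it suffices to adjoin $c$ together with the finitely many vectors of $V(M)$ needed to restore non-degeneracy. For the symmetric theories over a pseudo-finite field I would combine two observations: that $\phi \perp (-\phi)$ is hyperbolic over any field, its diagonal being a totally isotropic subspace of half the dimension; and that over a pseudo-finite field the non-degenerate symmetric bilinear forms are classified up to isometry by dimension and discriminant — a property of each fixed dimension expressible in the first order, true in all finite fields and hence in $K(M)$ by \L{}o\'s's theorem, which in particular implies that any non-degenerate $K(M)$-form embeds isometrically into any non-degenerate $K(M)$-form of strictly larger dimension. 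Concretely, one first enlarges $A$ — adjoining $c$ and finitely many further vectors of $V(M)$ — so that $V(A)$ becomes non-degenerate of even dimension $2m$ over $K(A)$, with Gram matrix $G = (\beta(v_i,v_j))$ in a chosen basis $v_1,\dots,v_{2m}$. Since $V(M) = V(A) \oplus V(A)^{\perp}$ with $V(A)^{\perp}$ infinite-dimensional and non-degenerate over $K(M)$, the form with Gram matrix $-G$, regarded over $K(M)$, embeds isometrically into $V(A)^{\perp}$; so there are $u_1,\dots,u_{2m} \in V(A)^{\perp}$ with $\beta(u_i,u_j) = -\beta(v_i,v_j)$. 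The structure $A' := \langle A, u_1,\dots,u_{2m}\rangle$ then has $K(A') = K(A)$ — every new inner product already lies in $K(A)$ — and $V(A')$ carries the form $G \perp (-G)$ over $K(A)$, which is split.

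Granting these analogues, the back-and-forth follows the three cases of the proof of Proposition~\ref{prop:orthogonal bnf} without change. Given $f = (f_K, f_V) : A \to B$ with $f_K$ partial elementary and $c \in M \setminus A$: if $c \in K(M)$ one uses $\aleph_0$-saturation of $K(N)$ to find $d$ with $K(A)c \equiv K(B)d$ and extends by $c \mapsto d$ and extension of scalars; if $c \in \mathrm{Span}_{V(M)}(V(A))$ one iterates the previous case on the coefficients of $c$; and if $c$ is a genuinely new vector one uses the split-extension lemma to place $V(A)$ (enlarged so as to contain $c$) and $V(B)$ inside split substructures of $M$ and $N$ of the same dimension over matched fields, transports a split — or, in the alternating case, symplectic — basis so that the change-of-basis matrices have entries in $K(A)$ and $K(B)$ (using the first case again), and then invokes the Witt extension theorem to correct the resulting isomorphism so that it restricts to $f_V$ on $V(A)$. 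The one real obstacle is the symmetric case of the split-extension lemma, where the arithmetic of pseudo-finite fields — the classification of forms by dimension and discriminant — is needed; the alternating case uses nothing beyond the uniqueness of symplectic forms.
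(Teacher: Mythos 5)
Your argument is correct, but it takes a longer route than the paper's. For a genuinely new vector $c$, the paper does not pass through split subspaces or Witt's extension theorem at all. Instead it first applies Case~1 to bring the finitely many inner products $\beta(c,v_i)$ (and, in the symmetric case, $\beta(c,c)$) into $K(A)$, where $v_0,\dots,v_{k-1}$ is a basis of $V(A)$, and then directly produces a vector $d\in V(N)\setminus\mathrm{Span}(V(B))$ with $\beta(d,w_i)=f_K(\beta(c,v_i))$ and $\beta(d,d)=f_K(\beta(c,c))$; the extension $c\mapsto d$ of $f$ is then already an isomorphism, and no Witt correction is needed. The finitely many linear constraints on $d$ are met by non-degeneracy and infinite-dimensionality; the prescribed value of $\beta(d,d)$ in the symmetric case is the single point where the arithmetic of pseudo-finite fields enters, because one needs the form (restricted to an infinite-dimensional, finite-codimension subspace) to be universal, which holds since every non-degenerate symmetric form of dimension at least $3$ over a pseudo-finite field of odd characteristic is isotropic. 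So both proofs use the arithmetic of the field at exactly one point, but the paper does so more economically. Your split-extension lemma, and especially the $G\perp(-G)$ device, is correct and would yield the result, at the cost of building machinery the paper bypasses. One small caveat: since the paper never invokes Witt's extension theorem in this proposition, your claim that Witt's theorem is ``the one place the hypothesis $\mathrm{char}\neq 2$ is genuinely used'' describes your route rather than the paper's; in the paper it enters via the universality of the form.
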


\begin{proof}
    We will follow the proof of Proposition \ref{prop:orthogonal bnf} and merely indicate where the argument needs to be adjusted. Suppose we are given an isomorphism $f: A \to B$ as in the statement, where $A \subseteq M$ and $B \subseteq N$ are finitely generated substructures and we write $f = (f_{K},f_{V})$ where $f_{K} : K(A) \to K(B)$ is a partial elementary isomorphism of fields.  Fix some $c \in M \setminus A$ and we will show that we can extend $f$ to some $f'$ whose domain is $\langle A c \rangle$. We must consider three cases:

    \textbf{Case 1:  }$c \in K(M)$.

    This case goes through unchanged, extending the map to to the extension of scalars to $K(A)(c)$, using the fact that the bilinear form extends canonically to this extension. 

    \textbf{Case 2:  }$c \in \mathrm{Span}_{V(M)}(V(A))$.  

    As before, in this case, we have $c = \sum_{i = 1}^{n} \alpha_{i} v_{i}$ where each $\alpha_{i} \in K(M)$, and we may iteratively apply Case 1, to extend $f$ to some $f'$ whose domain is $\langle A, \alpha_{1}, \ldots, \alpha_{n} \rangle$, and note that $c \in \langle A, \alpha_{1}, \ldots, \alpha_{n} \rangle$. 

    \textbf{Case 3:  }$c \in V(M) - \mathrm{Span}_{V(M)}(V(A))$.  

    Let $v_{0}, \ldots, v_{k-1}$ be a basis for $V(A)$ and let $w_{i} = f_{V}(v_{i})$. So $w_{0}, \ldots, w_{k-1}$ is also a basis of $V(B)$. By Lemma \ref{lem:hyperbolic}, we can extend $v_{0}, \ldots, v_{k-1}$ to a $K(M)$-basis $v_{0}, \ldots, v_{2n-1}$ of a hyperbolic subspace of $V(M)$ which contains $c$ in its span. Likewise, we can extend $w_{0}, \ldots, w_{k-1}$ to a $K(M)$-basis $w_{0}, \ldots, w_{2n-1}$ of a hyperbolic subspace of $V(M)$. 
    
    By Case 1, we may assume that, for all $i < k$, $\beta(c,v_{i}) \in K(A)$ and hence $\beta(c,v) \in K(A)$ for all $v \in V(A)$, and also that $\beta(c,c) \in V(A)$ (which is automatic in the case that $\beta$ is alternating).  By non-degeneracy, we may find some $d \in V(N)\setminus \mathrm{Span}_{V(N)}(V(B))$ such that, for all $w \in V(B)$,
    $$
    \beta(d,w) = f_{K}(\beta(c,f^{-1}_{V}(w)))
    $$
    and $\beta(d,d) = f_{K}(\beta(c,c))$.  Then the map $f'$ extending $f$ by mapping $c \mapsto d$ is the desired isomorphism. 
\end{proof}

We then obtain the parallel corollaries:

\begin{cor} \label{cor:grangerqe}
    If $T_{0}$ is any completion of $\mathrm{PSF}_{\neq 2}$, then $_{S}T^{T_{0}}_{\infty}$ and $_{A}T^{T_{0}}_{\infty}$ eliminate quantifiers down to those on the field.  
\end{cor}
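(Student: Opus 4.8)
The plan is to deduce this from the preceding proposition in exactly the way the quantifier-elimination corollary for orthogonal spaces was deduced from Proposition~\ref{prop:orthogonal bnf}. Recall the standard criterion for \emph{elimination of quantifiers down to the field sort}: an $L$-theory $T$ has this property precisely when there is a back-and-forth system, between any pair of sufficiently saturated models $M, N \models T$, consisting of $L$-isomorphisms $f : A \to B$ between finitely generated substructures $A \subseteq M$ and $B \subseteq N$ whose restriction $f|_{K(A)}$ to the field sort is partial elementary as an $L_{\mathrm{ring}}$-map into $K(N)$. Granting such a system, a routine compactness argument shows that every $L$-formula is, modulo $T$, equivalent to a Boolean combination of quantifier-free $L$-formulas and $L_{\mathrm{ring}}$-formulas applied to field-valued $L$-terms.

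First I would fix a completion $T_{0}$ of $\mathrm{PSF}_{\neq 2}$ and treat the symmetric case ${}_{S}T^{T_{0}}_{\infty}$; the alternating case ${}_{A}T^{T_{0}}_{\infty}$ is identical. Given models $M, N \models {}_{S}T^{T_{0}}_{\infty}$, we may---since quantifier-elimination assertions are insensitive to passing to elementary extensions---assume that $K(M)$ and $K(N)$ are $\aleph_{0}$-saturated; and since both satisfy $T_{0}$, they are elementarily equivalent pseudo-finite fields of characteristic not $2$. Infinite-dimensionality of $V(M)$ and $V(N)$ is part of the theory. Thus $M$ and $N$ satisfy the hypotheses of the preceding proposition, which supplies exactly the required back-and-forth system. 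Applying this uniformly and invoking the criterion above yields that ${}_{S}T^{T_{0}}_{\infty}$, and likewise ${}_{A}T^{T_{0}}_{\infty}$, eliminates quantifiers down to those on the field.

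The only point needing care---and it is precisely why the language $L$ was set up with the coordinate functions and linear independence predicates---is the bookkeeping identifying a finitely generated substructure with a finite tuple from $V$ together with a finitely generated subfield of $K$, so that such substructures are genuinely closed under all operations of $L$ and every field-valued $L$-term is, modulo $T^{0}_{V,K}$, a ring-polynomial expression in the coordinates and $\beta$-values of the given $V$-tuples. This is the content that was subtle in Granger's original treatment and corrected in \cite{abdaldaim2023higher} and \cite{ChernikovHempel3}; one appeals to those for this reduction rather than reproving it, after which no genuine obstacle remains.
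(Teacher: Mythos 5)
Your proposal is correct and takes essentially the same route as the paper, which simply records this corollary as the direct consequence of the preceding back-and-forth proposition in the same way the orthogonal-space QE corollary followed from Proposition~\ref{prop:orthogonal bnf}; you have merely spelled out the standard criterion (back-and-forth between finitely generated substructures with field-restrictions partial elementary, over models with $\aleph_0$-saturated fields, yields QE down to the field) that the paper leaves implicit. No gaps.
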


\begin{cor} \label{cor:grangerstableembeddedness}
    Let $M$ be a model of $_{S}T^{PSF}_{\infty}$ or $_{A}T^{PSF}_{\infty}$ with $K$ of odd characteristic and $V$ infinite dimensional. Suppose $C$ is a substructure of $M$ and $a$ and $b$ are tuples from the field sort with $a \equiv_{K(C)} b$ in $\mathrm{Th}(K)$. Then $a \equiv_{C} b$ in $M$.  
\end{cor}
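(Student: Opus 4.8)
The plan is to imitate the proof of Corollary~\ref{cor:orthogonalstableembeddedness} essentially verbatim, replacing the quadratic form by the bilinear form and invoking Corollary~\ref{cor:grangerqe} in place of the quantifier elimination for $T^{0}_{O}$. Work inside a monster model $\mathbb{M}$ of the relevant theory ($_{S}T^{PSF}_{\infty}$ or $_{A}T^{PSF}_{\infty}$), and set $K_{0} = \mathrm{acl}(aK(C))$ and $K_{1} = \mathrm{acl}(bK(C))$, where algebraic closure is computed in $\mathrm{Th}(K(\mathbb{M}))$.

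The first step is to record the analog of Lemma~\ref{lem:substructure} for the bilinear language $L$: if $C$ is a substructure of a model $N$ of $_{S}T^{PSF}_{\infty}$ (resp. $_{A}T^{PSF}_{\infty}$) and $K(C) \subseteq K' \subseteq K(N)$, then $\langle C, K'\rangle \cong (K' \otimes_{K(C)} V(C), K', \beta')$, where $\beta'$ is the canonical $K'$-bilinear extension of $\beta|_{V(C)}$. The proof is identical to that of Lemma~\ref{lem:substructure}: the linear independence predicates guarantee that independence in $V(C)$ is preserved in $N$, so $K' \otimes_{K(C)} V(C)$ is identified with the set of $K'$-combinations of vectors from $V(C)$; this set is visibly closed under the abelian group operations, scalar multiplication, and the coordinate functions (by the same $\mathrm{GL}_{n}(K')$ argument), and it is closed under $\beta$ since bilinearity gives $\beta(\sum_{i}\alpha_{i}v_{i}, \sum_{j}\mu_{j}w_{j}) = \sum_{i,j}\alpha_{i}\mu_{j}\beta(v_{i},w_{j}) \in K'$, using that each $\beta(v_{i},w_{j}) \in K(C) \subseteq K'$.

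Applying this with $K' = K_{0}$ and $K' = K_{1}$, we obtain $\langle C, K_{i}\rangle \cong (K_{i} \otimes_{K(C)} V(C), K_{i}, \beta_{i})$ for $i = 0,1$. Choose a partial elementary isomorphism $\sigma_{K} : K_{0} \to K_{1}$ over $K(C)$ with $\sigma_{K}(a) = b$, and define $\sigma_{V} : K_{0} \otimes_{K(C)} V(C) \to K_{1} \otimes_{K(C)} V(C)$ as the unique additive map with $\sigma_{V}(\alpha \otimes v) = \sigma_{K}(\alpha) \otimes v$, extended bilinearly. This intertwines $\beta_{0}$ and $\beta_{1}$: since $\beta(v,w) \in K(C)$ is fixed by $\sigma_{K}$, we get $\beta_{1}(\sigma_{V}(\alpha\otimes v), \sigma_{V}(\mu \otimes w)) = \sigma_{K}(\alpha)\sigma_{K}(\mu)\beta(v,w) = \sigma_{K}\bigl(\alpha\mu\beta(v,w)\bigr) = \sigma_{K}\bigl(\beta_{0}(\alpha \otimes v, \mu\otimes w)\bigr)$. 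Hence $\tilde{\sigma} = (\sigma_{K}, \sigma_{V})$ is an isomorphism $\langle C, K_{0}\rangle \to \langle C, K_{1}\rangle$ fixing $C$ pointwise and restricting to a partial elementary map on the field sort.

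Finally, by Corollary~\ref{cor:grangerqe}, $\mathrm{Th}(M)$ eliminates quantifiers down to the field sort, so an isomorphism of substructures that is partial elementary on the field sort is partial elementary; applying this to $\tilde{\sigma}$, together with $\tilde{\sigma}(a) = b$ and $\tilde{\sigma}|_{C} = \mathrm{id}$, yields $a \equiv_{C} b$. The only point requiring genuine (if routine) care is the bilinear version of Lemma~\ref{lem:substructure} — in particular that the substructure generated by $C$ and $K_{i}$ picks up no vectors beyond $K_{i} \otimes_{K(C)} V(C)$ — but this is immediate from the presence of the coordinate functions, exactly as in the quadratic case; everything else is bookkeeping.
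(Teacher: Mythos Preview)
Your proof is correct and follows exactly the approach the paper intends: the paper states this corollary without proof, as a direct parallel to Corollary~\ref{cor:orthogonalstableembeddedness}, and your argument is precisely that proof transported to the bilinear setting with Corollary~\ref{cor:grangerqe} supplying the quantifier elimination. The only addition you make is spelling out the bilinear analogue of Lemma~\ref{lem:substructure}, which the paper leaves implicit; this is routine and correctly handled.
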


\section{Quadratic Geometries}

\subsection{The basic theory}  The language $L_{Q}$ will extend the language $L_{V,K}$ with the addition of a new sort $Q$, as well as function symbols $\beta_{V} : V^{2} \to K$, $+_{Q} : Q \times V \to Q$, $-_{Q} : Q^{2} \to V$, and $\beta_{Q} : Q \times V \to K$.  

The theory $T^{0}_{Q}$ extends $T^{0}_{V,K}$ and asserts that $K$ has characteristic $2$.  There are axioms asserting that $\beta_{V}$ is a non-degenerate alternating bilinear form on $V$ and that, for each $q \in Q$, $\beta_{Q}(q,\cdot) : V \to K$ is a quadratic form on $V$ with associated bilinear form $\beta_{V}$, i.e. for all $x,y \in V$ and $\alpha \in K$, we have $\beta_{Q}(q,\alpha x) = \alpha^{2}\beta_{Q}(q,x)$ and 
$$
\beta_{V}(x,y) = \beta_{Q}(q,x+y) - \beta_{Q}(q,x) - \beta_{Q}(q,y).  
$$
The theory asserts that $+_{Q}$ defines a regular action of $V$ on $Q$ with the property that, for all $q \in Q$ and $v,w \in V$, 
$$
\beta_{Q}(q+_{Q} v,w) = \beta_{Q}(q,w) + \beta_{V}(v,w)^{2}.  
$$
There is also an axiom stating that, for all $q,q' \in Q$, $q -_{Q} q' = v$ for the unique $v$ with $q' +_{Q} v = q$.  We will call models of $T^{0}_{Q}$ \emph{quadratic geometries}. 

We will also work with a further expansion $L_{Q,\omega}$, which extends $L_{Q}$ with the addition of a unary function $\omega : Q \to K$. The theory $T^{0}_{Q,\omega}$ has an additional axiom asserting that $\omega(q) \in \{0,1\}$ and, for all $q_{1},q_{2} \in Q$,
$$
\omega(q_{1}) = \omega(q_{2}) \iff (\exists x \in K)( q_{1}(q_{1}-_{Q} q_{2}) = x^{2} - x).  
$$
Moreover, $T^{0}_{Q,\omega}$ will have an axiom for each $n$ asserting that if $V$ has dimension $2n$, then $\omega$ outputs the Witt index of $q$ (see Remark \ref{rem: Wittdef} below).  We will refer to models of $T^{0}_{Q,\omega}$ as \emph{quadratic geometries with Witt defect}.  

It may be useful to give a gloss on what is going in $T^{0}_{Q,\omega}$.  The idea is that $Q$ denotes a set of quadratic forms on $V$ which give rise to the bilinear form $\beta_{V}$.  The function $\beta_{Q}$ is the `evaluation' function, so we may identify each $q \in Q$ with the quadratic form $\beta_{Q}(q,\cdot): V \to K$.  Then, given any two forms $q,q' \in Q$, the function $\sqrt{q - q'}$ is a linear functional on $V$.  The axioms of $T^{0}_{Q,\omega}$ say that we can write this function in the form $\lambda_{v} := \beta_{V}(v,\cdot)$ so we get a regular action of $V$ on $Q$ where $v$ acts on $q$ by sending it to $q + \lambda_{v}^{2}$.  This action is given by $+_{Q}$.  Finally, letting $\wp(x) = x^{2} - x$ be the Artin-Schrier map, Cherlin and Hrushovski define a $K$-valued binary function $[q_{1},q_{2}]$ on $Q$ by stipulating that if $q_{2} = q_{1} + \lambda_{v}^{2}$, then $[q_{1},q_{2}] = q_{1}(v) = q_{2}(v)$.  Then the relation $[q_{1},q_{2}] \in \wp(K)$ defines an equivalence relation on $Q$.  The axiom that $\omega(q) \in \{0,1\}$, then, implies that this equivalence relation has at most $2$ classes. 

If $M \models T^{0}_{Q}$ and $q_{*} \in Q$, we may naturally interpret an orthogonal space $(V(M),K(M),q_{*})$, which may be viewed as an $L_{O}$-structure with the function symbol $q$ interpreted to be $\beta_{Q}^{M}(q_{*},-)$.  We will make frequent use of this interpretation to lift arguments for quantifier elimination from the orthogonal space situation to the case of orthogonal geometries.

\begin{rem} \label{rem: Wittdef}
The proof of Proposition \ref{prop:orthclass} establishes that if $M \models T^{0}_{Q}$ is a finite dimensional model with $K(M)$ a perfect field having a unique quadratic Galois extension, the Witt defect is definable (with quantifiers) in $L_{Q}$.  Indeed, suppose that $V(M)$ has dimension $2n$.  Then expresses that $\omega(q) = 0$ if and only if $(V(M),q)$ is hyperbolic, that is 
    \begin{eqnarray*}
(\exists v_{1}, \ldots, v _{2n} \in V)\Large[\theta_{2n}(v_{1},\ldots, v_{2n}) &\wedge& \bigwedge_{i=1}^{2n} q(v_{i}) = 0 \\
&\wedge& \bigwedge_{i = 1}^{n} \beta(v_{2i-1},v_{2i}) = 1 \\
&\wedge& \bigwedge_{i = 1}^{n} \bigwedge_{k \not\in \{2i-1,2i\}} \beta(v_{2i-1},v_{k}) = \beta(v_{2i},v_{k}) = 0 \Large],
\end{eqnarray*}
where we write $q(v)$ as an abbreviation for $\beta_{Q}(q,v)$. Then $\omega(q) = 1$ if and only if $\omega(q) \neq 0$.  This gives an $L_{Q}$-definition of the Witt defect function $\omega$. 
\end{rem}

\subsection{Extension of scalars}  

Suppose $M \models T^{0}_{Q}$ is a quadratic geometry.  Given a field $K'$ extending $K(M)$, we want to describe how to extend scalars from $K(M)$ to $K'$ to obtain an $L_{Q}$-extension of $M$.  We will also consider how to do this in the case that $M$ is a quadratic geometry with Witt defect, though in this case we will have to impose restrictions on which fields $K$ we may use as extension fields. 

Fix some $M \models T^{0}_{Q}$ with $Q(M) \neq \emptyset$ and fix some $q_{0} \in Q(M)$. Then as $V(M)$ acts regularly on $Q(M)$, we know that $Q(M) = \{v +_{Q} q_{0} : v \in V(M)\}$.  Writing $q_{v}$ for $v +_{Q} q_{0}$, we can think of the action of $V(M)$ on $Q(M)$ as an action on subscripts: $w +_{Q} q_{v} = q_{w+v}$.  If $K'$ is a field extension of $K(M)$, we define the \emph{extension of scalars} $M' = (V',K',Q')$ \emph{of} $M$ \emph{to} $K'$ to be the orthogonal geometry with vector space $V' = K' \otimes_{K(M)} V(M)$ and with quadratic form sort $\{q_{v} : v \in V'\}$.  Viewing $V(M)$ as a subset of $V'$ by identifying $v$ with $1 \otimes v$ for all $v \in V(M)$, we can thus view $Q(M) = \{q_{v} : v \in V(M)\}$ as a subset of $Q' = \{q_{v} : v \in V'\}$ in a natural way. The bilinear form $\beta_{V}$ and the quadratic form $q_{0}$ extend canonically to $V'$.  Then we interpret the evaluation of $q_{v}$ on $V'$ as $q_{0} + \lambda_{v}^{2}$.  It is easy to check that this did not depend on the choice of $q_{0}$. 

The following lemma establishes that extension of scalars coincides with the substructure of the monster generated by additional field elements: 

\begin{lem} \label{lem:quadraticextensionofscalars}
Suppose $C \subseteq M \models T^{0}_{Q}$ is a substructure, $q_{0} \in Q(C)$, and $K'$ is a field with $K(C) \subseteq K' \subseteq K(M)$.  Then $\langle K',C \rangle = (V',K',Q')$ where $V' = K' \otimes_{K(C)} V(C)$ and $Q' = \{v +_{Q} q_{0} : v \in V'\}$.  
\end{lem}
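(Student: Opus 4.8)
The plan is to reduce almost everything to Lemma~\ref{lem:substructure} by passing to the orthogonal space interpreted by the chosen basepoint $q_{0}$, and then to dispatch the $Q$-sort and the symbols $+_{Q}, -_{Q}, \beta_{Q}$ by hand.

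First I would record that, since $q_{0} \in Q(C)$ and $C$ is a substructure (hence closed under $\beta_{Q}$), the function $v \mapsto \beta_{Q}^{M}(q_{0},v)$ takes values in $K(C)$ on $V(C)$, and by the $T^{0}_{Q}$-axioms its associated bilinear form is $\beta_{V}$. Thus $(V(C),K(C),\beta_{Q}^{M}(q_{0},-)|_{V(C)})$ is an $L_{O}$-substructure of the orthogonal space $(V(M),K(M),\beta_{Q}^{M}(q_{0},-))$ interpreted in $M$. Applying Lemma~\ref{lem:substructure} to this orthogonal space and the field $K'$, the substructure it generates is $(K' \otimes_{K(C)} V(C), K', q'_{0})$, with $q'_{0}$ the canonical extension. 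This already tells us that $V' := K' \otimes_{K(C)} V(C)$, identified via $v \mapsto 1 \otimes v$ and the linear independence predicates with the $K'$-span of $V(C)$ inside $V(M)$, is closed in $M$ under the $V$-operations, scalar multiplication by $K'$, and the coordinate functions $\pi_{n,i}$, and that $\beta_{V}(v,w) \in K'$ and $\beta_{Q}^{M}(q_{0},v) \in K'$ for all $v,w \in V'$.

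Then I would set $Q' = \{ v +_{Q} q_{0} : v \in V' \}$, note this is a subset of $\langle K',C\rangle$ and (using that any other basepoint in $Q(C)$ differs from $q_{0}$ by a vector in $V(C) \subseteq V'$) is independent of the choice of $q_{0}$, and verify closure of $(V',K',Q')$ under the remaining operations. Closure of $K'$ under the ring operations and the $V$-sort facts are already in hand. For $+_{Q}$: if $q = w +_{Q} q_{0} \in Q'$ and $v \in V'$, then $v +_{Q} q = (w+v) +_{Q} q_{0} \in Q'$ by the action axioms for $+_{Q}$. For $-_{Q}$: if $q = w +_{Q} q_{0}$ and $q' = w' +_{Q} q_{0}$, then $(w-w') +_{Q} q' = q$, so $q -_{Q} q' = w - w' \in V'$. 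For $\beta_{Q}$: for $q = w +_{Q} q_{0} \in Q'$ and $v \in V'$, the $T^{0}_{Q}$-axiom gives $\beta_{Q}(q,v) = \beta_{Q}(q_{0},v) + \beta_{V}(w,v)^{2}$, and both summands lie in $K'$ by the previous paragraph. This exhausts the function symbols of $L_{Q}$, so $(V',K',Q')$ is a substructure of $M$ that contains $C$ and $K'$ and is contained in $\langle K',C\rangle$, hence equals it.

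I do not expect a serious obstacle: all the real content is Lemma~\ref{lem:substructure}, and the hyperbolic-plane lemmas play no role. The only points demanding a little care are (a) checking that the hypotheses of Lemma~\ref{lem:substructure} genuinely apply, i.e. that $\beta_{Q}^{M}(q_{0},-)$ restricts to a $K(C)$-valued quadratic form on $V(C)$ so that $(V(C),K(C),\beta_{Q}^{M}(q_{0},-)|_{V(C)})$ is a bona fide $L_{O}$-substructure; and (b) observing that we are not free to choose how $\beta_{Q}$ evaluates on $Q'$ — the value $\beta_{Q}(w +_{Q} q_{0}, v) = \beta_{Q}(q_{0},v) + \beta_{V}(w,v)^{2}$ is dictated by $M$'s own axioms, which is also why the computation does not depend on $q_{0}$.
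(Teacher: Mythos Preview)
Your proposal is correct and follows the same approach as the paper, which simply says ``As in Lemma~\ref{lem:substructure}, we observe that the structure $(K',V',Q')$ clearly contains $K'$ and $C$ and is closed under the functions of $L_{Q}$.'' You have spelled out exactly this observation in detail, reducing the $V$- and $K$-sort closure to Lemma~\ref{lem:substructure} via the interpreted orthogonal space and then checking $+_{Q}$, $-_{Q}$, and $\beta_{Q}$ by hand.
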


\begin{proof}
    As in Lemma \ref{lem:substructure}, we observe that the structure $(K',V',Q')$ clearly contains $K'$ and $C$ and is closed under the functions of $L_{Q}$. 
\end{proof}

To deal with extension of scalars in the case that Witt defect is present, we will have to take Artin-Schreier extensions into account.

\begin{lem} \label{lem:finding one form}
    Suppose $M \models T^{0}_{Q}$.  If $K(M)$ is a perfect field, $A \subseteq M$ is a finitely generated substructure, and $q_{*} : V(A) \to K(A)$ is a quadratic form associated to $\beta_{V}|_{V(A)^{2}}$, then there is $q \in Q(M)$ such that $q|_{V(A)} = q_{*}$, i.e. 
    $$
    M \models \beta_{Q}(q,v) = q_{*}(v)
    $$
    for all $v \in V(A)$.  
\end{lem}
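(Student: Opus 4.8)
The plan is to exploit the regular action of $V$ on $Q$ to reduce the statement to a fact about the bilinear form $\beta_{V}$ alone. Fix any $q_{0} \in Q(M)$ and write $q_{0}^{A} := \beta_{Q}(q_{0},\cdot)|_{V(A)}$, which is a quadratic form on $V(A)$ with associated bilinear form $\beta_{V}|_{V(A)^{2}}$. By the defining axiom $\beta_{Q}(q_{0} +_{Q} v, w) = \beta_{Q}(q_{0},w) + \beta_{V}(v,w)^{2}$, it suffices to produce a vector $v \in V(M)$ with $\beta_{V}(v,w)^{2} = q_{*}(w) - q_{0}^{A}(w)$ for every $w \in V(A)$; the element $q := q_{0} +_{Q} v \in Q(M)$ will then satisfy $q|_{V(A)} = q_{*}$.

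The key observation is that $\mu := q_{*} - q_{0}^{A} \colon V(A) \to K(M)$ is additive and satisfies $\mu(\alpha x) = \alpha^{2}\mu(x)$ for $\alpha \in K(A)$: this is because $q_{*}$ and $q_{0}^{A}$ have the same associated bilinear form, so the bilinear cross-terms cancel in $\mu(x+y)$. Since $K(M)$ is perfect of characteristic $2$, each $\mu(w)$ has a unique square root $\ell(w) \in K(M)$, and from $(\sqrt{a}+\sqrt{b})^{2}=a+b$ and $(\alpha\sqrt{c})^{2}=\alpha^{2}c$ one reads off that $\ell \colon V(A) \to K(M)$ is $K(A)$-linear. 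Note that we genuinely need square roots to be taken in $K(M)$ rather than $K(A)$, since $K(A)$ need not be perfect; correspondingly the witnessing vector will be found in $V(M)$, not in $V(A)$.

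Next I would extend $\ell$ to a $K(M)$-linear functional on the subspace $W := \mathrm{Span}_{K(M)}(V(A)) \subseteq V(M)$. This uses that $V(A)$ is finite-dimensional over $K(A)$, which is a routine consequence of $A$ being finitely generated: closing a finite generating set under the $L_{Q}$-operations stabilizes the vector-space part after finitely many steps, since $-_{Q}$ only produces differences of elements already in the $V$-span and the coordinate, $\beta_{V}$, $\beta_{Q}$, and $\omega$ functions applied to a fixed finite-dimensional space over $K(A)$ contribute only finitely many further field elements. Fixing a $K(A)$-basis $e_{1},\dots,e_{m}$ of $V(A)$, the linear-independence predicates $\theta_{m}$ force it to remain $K(M)$-linearly independent, hence a $K(M)$-basis of $W$; setting $\tilde{\ell}(e_{i}) := \ell(e_{i})$ and extending $K(M)$-linearly gives $\tilde{\ell} \in W^{*}$ with $\tilde{\ell}|_{V(A)} = \ell$.

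Finally, since $\beta_{V}$ is non-degenerate on $V(M)$ and $W$ is finite-dimensional, the $K(M)$-linear map $V(M) \to W^{*}$, $v \mapsto \beta_{V}(v,\cdot)|_{W}$, is surjective: a proper image would have a nonzero annihilator in $W^{**} = W$, i.e.\ a nonzero vector orthogonal to all of $V(M)$, contradicting non-degeneracy (one may also deduce this from Fact \ref{fact:Aschbacher}(1)--(2)). Picking $v \in V(M)$ mapping to $\tilde{\ell}$ gives $\beta_{V}(v,w) = \ell(w)$ for all $w \in V(A)$, whence $\beta_{V}(v,w)^{2} = \ell(w)^{2} = \mu(w) = q_{*}(w) - q_{0}^{A}(w)$, and $q := q_{0} +_{Q} v$ is the required form. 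The only point that warrants genuine care is the finite-dimensionality of $V(A)$ over $K(A)$ together with the persistence of linear independence from $K(A)$ to $K(M)$ — that is what allows $\ell$ to be extended and the non-degeneracy argument to be applied — while the remaining steps are short computations.
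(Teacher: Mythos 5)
Your proof is correct and follows essentially the same strategy as the paper's: fix $q_0 \in Q(M)$ arbitrary, take $\ell(w) = \sqrt{q_*(w) - q_0(w)}$ (using perfectness of $K(M)$), observe this is linear, extend to $K(M)\otimes_{K(A)} V(A)$, and use non-degeneracy of $\beta_V$ to realize $\ell$ as $\beta_V(v,-)$ for some $v\in V(M)$, finishing with $q = q_0 +_Q v$. You spell out a couple of points the paper leaves implicit — why the square-root map is $K(A)$-linear, and why $V(A)$ is finite dimensional over $K(A)$ — but the underlying argument is the same.
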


\begin{proof}
    Let $w_{1}, \ldots, w_{k}$ be a basis of $V(A)$ and let $q_{0} \in Q(M)$ be arbitrary.  Define $\lambda : V(A) \to K(M)$ by 
    $$
    \lambda(w) = \sqrt{q_{*}(w) - q_{0}(w)},
    $$
    which is well-defined since $K(M)$ is perfect and extends canonically to a $K(M)$-linear functional on $K(M) \otimes_{K(A)} V(A) = V(\langle K(M),A \rangle)$.  Since $\beta_{V}$ is non-degenerate, there is some $v \in V(M)$ such that $\lambda_{v}(w) = \lambda(w)$ for all $w \in V(A)$, where $\lambda_{v} = \beta_{V}(v,-)$.  Now, unraveling, we see that $(q_{0} + \lambda^{2}_{v})|_{V(A)} = q_{*}$.  Defining $q = q_{0} +_{Q} v \in Q(M)$, then, gives us the desired quadratic form.
\end{proof}

\begin{lem} \label{lem: one AS extn}
    Suppose $M$ is a model of $T^{0}_{Q,\omega}$ with $K(M)$ a perfect field and $Q(M)$ non-empty. Then the field $K$ has at most one Artin-Schrier extension.
\end{lem}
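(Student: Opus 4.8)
The plan is to argue by contradiction, exploiting that $\omega$ takes only two values. Write $K = K(M)$, and for $q,q'\in Q(M)$ abbreviate $[q,q'] := \beta_Q\bigl(q,\, q -_{Q} q'\bigr)$; the axioms of $T^{0}_{Q,\omega}$ then say that for all $q,q'$ one has $\omega(q)=\omega(q')$ if and only if $[q,q']\in\wp(K)$, so the equivalence relation ``$[q,q']\in\wp(K)$'' has at most two classes on $Q(M)$. Hence it suffices to show: if $K$ has at least two distinct Artin--Schreier extensions, i.e.\ $\dim_{\mathbb{F}_2} K/\wp(K)\ge 2$, then there are three forms in $Q(M)$ that are pairwise inequivalent under this relation.

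First I would fix a convenient base point. Choose $u,v\in V(M)$ with $\beta_V(u,v)=1$; such a symplectic pair exists because $V(M)\ne 0$ (a nondegenerate alternating form cannot live on a one-dimensional space). Applying Lemma~\ref{lem:finding one form} to the finitely generated substructure generated by $u$ and $v$ and to the quadratic form on it, associated to the restriction of $\beta_V$, that sends $u$ and $v$ to $0$ --- this is the only point at which perfectness of $K$ and $Q(M)\ne\emptyset$ are used --- yields $q_0\in Q(M)$ with $\beta_Q(q_0,u)=\beta_Q(q_0,v)=0$. Consequently $\beta_Q(q_0,\alpha u+\gamma v)=\alpha\gamma$ for all $\alpha,\gamma\in K$, so $q_0$ represents every scalar: $\beta_Q(q_0,\mu u+v)=\mu$ for each $\mu\in K$. (Representing every scalar is what gives the forms below enough room.)

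Now suppose for contradiction that $\dim_{\mathbb{F}_2} K/\wp(K)\ge 2$, and pick $b,c\in K$ whose images in $K/\wp(K)$ are $\mathbb{F}_2$-linearly independent, so that $b$, $c$ and $b+c$ all lie outside $\wp(K)$. Set $q_1 = q_0 +_{Q}(bu+v)$ and $q_2 = q_0 +_{Q}(cu+v)$. Using the identity $[q,\,q+_{Q}w]=\beta_Q(q,w)$ (immediate from the definitions of $+_{Q}$ and $-_{Q}$), the action identity $\beta_Q(q+_{Q}w,x)=\beta_Q(q,x)+\beta_V(w,x)^2$, and the symmetry of the alternating form, one computes $[q_0,q_1]=\beta_Q(q_0,bu+v)=b$ and $[q_0,q_2]=c$; and, since $V(M)$ acts on $Q(M)$ as an abelian group of exponent $2$, one has $q_2 = q_1 +_{Q}(b+c)u$, whence
$$
[q_1,q_2]=\beta_Q\bigl(q_1,(b+c)u\bigr)=(b+c)^2\,\beta_Q(q_1,u)=(b+c)^2,
$$
using $\beta_Q(q_1,u)=\beta_Q(q_0,u)+\beta_V(bu+v,u)^2=\beta_V(v,u)^2=1$. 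Since $z^2-z=\wp(z)$, we have $(b+c)^2\equiv b+c \pmod{\wp(K)}$, so all three of $[q_0,q_1]$, $[q_0,q_2]$, $[q_1,q_2]$ lie outside $\wp(K)$. Therefore $\omega(q_0),\omega(q_1),\omega(q_2)$ are pairwise distinct, which is impossible since $\omega$ is $\{0,1\}$-valued; this proves the lemma.

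The only genuinely non-mechanical parts are the opening reduction and the normalization of $q_0$: one must realize that producing three pairwise-$\wp(K)$-inequivalent forms suffices, and that making $q_0$ hyperbolic on the span of $u$ and $v$ (so that it represents every scalar) is the right choice. Everything else is a short computation with $+_{Q}$, $-_{Q}$, and the action identity. The one subtlety to watch is that the $\omega$-axiom is phrased via $q_i -_{Q} q_j$, so one must correctly identify the vector carrying one form to another before applying $\beta_Q$; and one should confirm that, besides $Q(M)\ne\emptyset$ and perfectness of $K$, all that is really needed is $V(M)\ne 0$.
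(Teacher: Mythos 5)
Your proof is correct and follows essentially the same strategy as the paper's: fix a hyperbolic $q_0$ on the span of a symplectic pair via Lemma~\ref{lem:finding one form}, and then exhibit three elements of $Q(M)$ whose $\omega$-values must be pairwise distinct, contradicting that $\omega$ takes only two values. Your execution is somewhat cleaner than the paper's, which routes through norm forms and the representations $q = q_0 + \lambda_{\sqrt{\alpha}v+w}^2$ (requiring explicit square-root extraction); by instead taking $q_i = q_0 +_{Q}(b_i u + v)$ for $b_1,b_2$ $\mathbb{F}_2$-independent modulo $\wp(K)$, you get the three pairwise-$\wp(K)$-inequivalent values $b$, $c$, $(b+c)^2\equiv b+c$ by a direct computation, confining the use of perfectness to the construction of $q_0$.
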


\begin{proof}
    Suppose towards contradiction that, over $K(M)$, there are two irreducible Artin-Schreier polynomials $X^{2} - X - \alpha$ and $X^{2} - X - \alpha'$ which give distinct extensions. Clearly we have $\alpha,\alpha' \not\in \wp (K)$. Moreover, $\alpha + \alpha' \not\in \wp(K(M))$, since if $\alpha' = \alpha + \wp(\gamma)$ for some $\gamma \in K(M)$, then it is easy to check that, for any root $\delta$ of $X^{2} - X - \alpha$, $\delta + \gamma$ is a root of $X^{2} - X - \alpha'$, hence adjoining a root of one polynomial adjoins a root to both, against our assumption that these polynomials give distinct Artin-Schrier extensions. Let $\delta$ and $\delta'$ denote roots of $X^{2} - X - \alpha$ and $X^{2} - X - \alpha'$ respectively.  Let $F = K(M)(\delta)$ and $F' = K(M)(\delta)$.  We know that, viewing $F$ and $F'$ each as vector spaces over $K(M)$, $(F,N^{F}_{K(M)})$ and $(F',N^{F}_{K(M)})$ are both $2$-dimensional orthogonal spaces over $K(M)$. 
    
    Let $q_{0} \in Q(M)$ be arbitrary and find $v,w \in V(M)$ such that $\{v,w\}$ is a hyperbolic basis for a hyperbolic plane, i.e. $q_{0}(v) = q_{0}(w) = 0$ and $\beta_{V}(v,w) = 1$. Let $A$ be the substructure of $M$ generated by $K(M)$, $v,w$, and $q_{0}$. By Lemma \ref{lem:finding one form}, we can find $q,q' \in Q(A)$ that mimic the orthogonal spaces $(F,N^{F}_{K(M)})$ and $(F',N^{F}_{K(M)})$ with respect to the basis $\{v,w\}$.  More precisely, we have $q(v) = q'(v) = 1$ and $q(w) = N^{F}_{K(M)}(\delta) = \alpha$ and $q'(w) = N^{F}_{K(M)}(\delta') = \alpha'$.  To get a contradiction, we will show that $\omega$ cannot take the same value on any two of $\{q_{0}, q,q'\}$. 
    
    Let the vector $u = \sqrt{\alpha} v + w \in V(A)$. Then we have $q = q_{0} + \lambda_{u}^{2}$. To see this, we calculate 
    \begin{eqnarray*}
        (q_{0} + \lambda_{u}^{2})(v) &=& q_{0}(v) + (\beta_{V}(\sqrt{\alpha}v, v) + \beta_{V}(w,v))^{2} = 1 = q(v) \\
        (q_{0} + \lambda_{u}^{2})(w) &=& q_{0}(w) + (\sqrt{\alpha}\beta_{V}(v,w) + \beta_{V}(w,w))^{2} = \alpha = q(w).
    \end{eqnarray*}
    An identical argument shows $q' = q_{0} + \lambda^{2}_{u'}$ where $u' = \sqrt{\alpha'}v + w$. 

    Now we show that $\omega(q_{0}) \neq \omega(q)$. For this, we must show that $q_{0}(u) \not\in \wp(K(M))$. We have 
    $$
    q_{0}(u) = \alpha q_{0}(v) + q_{0}(w) + \beta_{V}(\sqrt{\alpha}v,w) = \sqrt{\alpha}. 
    $$
    Note that if $\sqrt{\alpha} = \gamma^{2} + \gamma$ for some $\gamma \in K(M)$, then $\alpha = \gamma^{4} + \gamma^{2} = \wp(\gamma^{2})$, so we have $q_{0}(u) = \sqrt{\alpha} \not\in \wp(K(M))$ by our assumption that $\alpha \not\in \wp(K(M))$. A symmetric argument shows $q_{0}(u') \not\in \wp(K(M))$. Thus $\omega(q) \neq \omega(q_{0}) \neq \omega(q')$.

    We are left with showing that $\omega(q) \neq \omega(q')$. We have $q = q' + \lambda_{u+u'}^{2}$, we have to show that $q(u+u') \not\in \wp(K(M))$.  But $u + u' = \sqrt{\alpha}v + \sqrt{\alpha'}v = \sqrt{\alpha+\alpha'}v$, so $q(u+u') = \alpha+\alpha' \not\in \wp(K(M))$ as desired.  
\end{proof}
    
%

To conclude our discussion of extension of scalars for models of $T^{0}_{Q,\omega}$, we need to explain how to intrepret $\omega$ on the extension of scalars. Suppose $M \models T^{0}_{Q,\omega}$, $K'$ is a field extending $K(M)$ which contains no Artin-Schreier root of a polynomial over $K(M)$ and has at most one Artin-Schreier extension. Let $M' = (K' \otimes_{K(M)} V(M), K',Q')$ be the extension of scalars of $M$ (viewed as an $L_{Q}$-structure) to $K'$ and we will explain how to interpret $\omega$.  We pick some $q_{0} \in Q(M)$ arbitrary and then it follows that $Q' = \{v +_{Q} q_{0} : v \in V(M')\}$.  Identifying $q_{0}$ with its canonical extension to a quadratic form on $V(M')$, we define, for all $v \in V(M')$, 
$$
 \omega(v +_{Q} q_{0}) = \left\{
\begin{matrix}
    \omega(q_{0}) & \text{ if } q_{0}(v) \in \wp(K') \\
    1- \omega(v+_{Q} q_{0}) & \text{ if } q_{0}(v) \not\in \wp(K')
\end{matrix}
\right.
$$
We now check that this gives a well-defined extension of $M$:

\begin{lem}\label{lem:QG_ext}
    Suppose $M = (V,K,Q) \models T^{0}_{Q,\omega}$ and $K'$ is a field extending $K(M)$ which contains no Artin-Schreier root of a polynomial over $K(M)$ and has at most one Artin-Schreier extension.  
    \begin{enumerate}
        \item The $L_{Q,\omega}$-structure $M'$ is well-defined. 
        \item The natural inclusion $M \to M'$ is an embedding of $L_{Q,\omega}$-structures and $M' \models T^{0}_{Q,\omega}$. 
    \end{enumerate} 
\end{lem}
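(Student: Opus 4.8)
The plan is to build on the discussion preceding the lemma, which already shows that $M'$ is a well-defined $L_Q$-structure, an extension of scalars of $M$ with $M \hookrightarrow M'$ an $L_Q$-embedding and $M' \models T^0_Q$; the only new content is the interpretation of $\omega$. It helps to first put the defining formula in normal form. Since $K'$ has at most one Artin--Schreier extension, $\dim_{\mathbb{F}_2}(K'/\wp(K')) \le 1$, so the reduction map $\epsilon : K' \to \{0,1\}$ sending $x$ to $0$ precisely when $x \in \wp(K')$ is an $\mathbb{F}_2$-linear homomorphism. Fixing $q_0 \in Q(M)$ and writing $q' = q_0 +_Q v$ via the regular action, the definition reads $\omega^{M'}(q') = \omega^M(q_0) + \epsilon(q_0(v))$ (sum in $\mathbb{F}_2$), which is manifestly a function $Q(M') \to \{0,1\}$. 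For part (1) the issue is independence of the base point $q_0$. Given another $q_1 = q_0 +_Q w \in Q(M)$, a direct expansion using the $T^0_Q$-axioms gives $q_1(v+w) \equiv q_0(v) + q_0(w) \pmod{\wp(K')}$ (the error term being $\wp(\beta_V(v,w))$), so, by additivity of $\epsilon$, the two candidate values of $\omega^{M'}(q')$ agree iff $\omega^M(q_0) + \omega^M(q_1) = \epsilon(q_0(w))$. But $q_0(w) = [q_0, q_1]$, so the left-hand side vanishes iff $\omega^M(q_0) = \omega^M(q_1)$ iff (the $\omega$-axiom in $M$) $q_0(w) \in \wp(K(M))$ iff (since $K'$ contains no Artin--Schreier root of a polynomial over $K(M)$) $q_0(w) \in \wp(K')$ iff $\epsilon(q_0(w)) = 0$. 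This is the heart of the lemma; it is exactly here that both hypotheses on $K'$ are used --- ``no new roots'' to transfer membership in the Artin--Schreier image from $K(M)$ to $K'$, and ``at most one extension'' to make $\epsilon$ additive.

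For part (2), that $M \to M'$ is an $L_{Q,\omega}$-embedding means $\omega^{M'}$ restricts to $\omega^M$ on $Q(M)$: for $q = q_0 +_Q v$ with $v \in V(M)$ we have $q_0(v) \in K(M)$, so $\epsilon(q_0(v)) = 0$ iff $q_0(v) \in \wp(K(M))$, and then $\omega^{M'}(q) = \omega^M(q_0) + \epsilon(q_0(v))$ equals $\omega^M(q)$ by the $\omega$-axiom in $M$, just as above. To see $M' \models T^0_{Q,\omega}$: we have $M' \models T^0_Q$ already, and $\omega^{M'}$ takes values in $\{0,1\}$ by construction. For the axiom tying $\omega$ to the Artin--Schreier image, write $q_i' = q_0 +_Q v_i$; the same expansion shows $q_1'(q_1' -_Q q_2') = q_1'(v_1+v_2) \equiv q_0(v_1) + q_0(v_2) \pmod{\wp(K')}$, whence $[q_1',q_2'] \in \wp(K')$ iff $\epsilon(q_0(v_1)) = \epsilon(q_0(v_2))$ iff $\omega^{M'}(q_1') = \omega^{M'}(q_2')$, again by additivity of $\epsilon$.

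The final axiom --- that $\omega$ outputs the Witt defect --- only constrains finite-dimensional models, so it is vacuous unless $\dim V(M') = \dim V(M) = 2n$. In that case I would argue that this extension of scalars preserves hyperbolicity: a hyperbolic space remains hyperbolic after tensoring, and if $(V(M), q_0)$ is not hyperbolic then its anisotropic part, which has dimension $2$ since $M \models T^0_{Q,\omega}$, is the norm form of a quadratic extension $K(M)(\delta)/K(M)$ with $\delta^2 - \delta = b \notin \wp(K(M))$; as $b \notin \wp(K')$ this extends to the anisotropic norm form of $K'(\delta)/K'$. Combining this with $\omega^{M'}|_{Q(M)} = \omega^M$ (and $M \models T^0_{Q,\omega}$), the axioms already verified, and the classification of nonsingular quadratic forms of a fixed even dimension by dimension and Arf invariant (as in the proof of Proposition \ref{prop:orthclass} and Remark \ref{rem: Wittdef}), one gets $\omega^{M'}(q') = 0$ iff $(V(M'),q')$ is hyperbolic for all $q' \in Q(M')$, so $M' \models T^0_{Q,\omega}$. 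The main obstacle is the bookkeeping in part (1): arranging the two hypotheses on $K'$ so that $\omega^{M'}$ is genuinely independent of the chosen base form $q_0$; everything else is routine expansion in the $T^0_Q$-axioms.
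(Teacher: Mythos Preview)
Your proof is correct and follows essentially the same approach as the paper's: the key computation in (1) is the same expansion $q_1(v+w) \equiv q_0(v) + q_0(w) \pmod{\wp(K')}$ combined with $\wp(K') \cap K(M) = \wp(K(M))$, only you package the index-$2$ case split via the additive map $\epsilon$ while the paper splits explicitly on whether $q_0(v) \in \wp(K')$. You are in fact more thorough than the paper, which does not explicitly verify the main $\omega$-axiom for $M'$ (it is implicit in the well-definedness computation) and does not address the finite-dimensional Witt-defect axiom at all; your treatment of the latter via preservation of hyperbolicity and anisotropy under extension of scalars is the natural argument.
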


\begin{proof}
   (1) We must check that our interpretation of $\omega$ on $M'$ did not depend on the choice of $q_{0}$. Suppose we make a different choice of $q'_{0} \in Q(M)$.  Then there is some $v \in V(M)$ such that $q'_{0} = v +_{Q} q_{0}$.  Notice our assumption on $K'$ entails that $\wp(K') \cap K(M) = \wp(K(M))$.  Hence, we have $\omega(q'_{0}) = \omega(q_{0})$ if and only if $q_{0}(v) \in \wp(K(M))$ if and only if $q_{0}(v) \in \wp(K')$.  

   Since $\omega$ only takes on values $\{0,1\}$, we want to show that $\omega(w+_{Q} q'_{0}) = \omega(q'_{0})$ if and only if $q'_{0}(w) \in \wp(K')$, for all $w \in V(M')$. Now we note that $w+_{Q} q'_{0} = (v+w)+_{Q} q_{0}$ and we have 
   \begin{eqnarray*}
       \omega((v+w)+_{Q} q_{0}) = \omega(q_{0}) &\iff& q_{0}(v+w) \in \wp(K') \\
       &\iff& q_{0}(v)+q_{0}(w) + \beta_{V}(v,w) \in \wp(K') \\
       &\iff& q_{0}(v) + q_{0}(w) + 2 \beta_{V}(v,w) + \beta_{V}(v,w)^{2} \in \wp(K') \\
       &\iff& q_{0}(v)+q_{0}(w) + \beta_{V}(v,w)^{2} \in \wp(K'),
   \end{eqnarray*}
   where the third biconditional follows since $\beta_{V}(v,w) + \beta_{V}(v,w)^{2} \in \wp(K')$ and $\wp(K')$ is an additive subgroup. This establishes 
   $$
   \omega(w +_{Q} q'_{0}) = \omega(q_{0}) \iff q_{0}(v)+q_{0}(w) + \beta_{V}(v,w)^{2} \in \wp(K').
   $$
   Moreover, since $q'_{0} = v+_{Q} q_{0}$, we have $q'_{0}(w) = q_{0}(w) + \beta_{V}(v,w)^{2}$. 
   
   Putting it all together, we see that if $q_{0}(v) \in \wp(K')$, then $\omega(w+_{Q}q'_{0}) = \omega(q_{0}) = \omega(q'_{0})$ if and only if $q'_{0}(w) \in \wp(K')$ as desired. 
   
   On the other hand, if $q_{0}(v) \not\in \wp(K')$, then we have $\omega(w+_{Q} q'_{0}) = \omega(q'_{0})$ if and only if $q_{0}(v)+q_{0}(w)+ \beta_{V}(v,w)^{2} \not\in \wp(K')$ if and only if $q_{0}(w)+ \beta_{V}(v,w)^{2} \in \wp(K')$ (since $\wp(K')$ has index $2$ in $(K',+)$). This shows $\omega(w+_{Q}q'_{0}) = \omega(q'_{0})$ if and only if $q'_{0}(w) \in \wp(K')$ and thus $\omega$ is well-defined.

   (2) The natural map $M \to M'$ is clearly an embedding of $L_{Q}$-structures so we just need to check that it respects $\omega$. But the above proof defined $\omega$ so that $\omega(v+_{Q} q_{0}) = \omega(q_{0})$ if and only if $q_{0}(v) \in \wp(K')$ and, as noted above, our assumption on $K'$ entails that $\wp(K') \cap K(M) = \wp(K(M))$.  Hence, for vectors $v \in V(M)$, in $M'$ we have $\omega(v+_{Q} q_{0}) = \omega(q_{0})$ if and only if $q_{0}(v) \in \wp(K)$ and thus $\omega$ on $M'$ agrees with its interpretation on $M$.
\end{proof}

\subsection{Some properties of existentially closed quadratic geometries}

\begin{defn}
    A profinite group $G$ is \emph{projective} if whenever $\alpha: G \to A$ and $\beta : B \to A$ are continuous epimorphisms, where $A$ and $B$ are finite groups, then there is a continuous homomorphism $\gamma :G \to B$ so that $\beta \circ \gamma = \alpha$ as in the following diagram:
$$
\xymatrix{ & G\ar@{-->}[dl]  \ar@{->>}[d] \\
B \ar@{->>}[r] & A }
$$
\end{defn}

If $G$ is a projective profinite group and $\pi : F \to G$ is an epimorphism, there there is a homomorphism $\pi' : G \to F$ such that $\pi \circ \pi' = \mathrm{id}_{G}$ (see \cite[Remark 22.4.2]{fried2008field}).  Examples of projective profinite groups include free and free pro-$p$ profinite groups \cite[Proposition 22.10.4]{fried2008field}. 

\begin{fact} \cite[Proposition 22.6.1]{fried2008field}
Given any profinite group $G$, there is a unique smallest projective profinite group $\tilde{G}$ with an epimorphism $\pi: \tilde{G} \to G$.  The epimorphism $\pi$ (or, for simplicity, the group $\tilde{G}$) is called the \emph{universal Frattini cover of} $G$. `Smallest' here means that if $\rho : H \to G$ is an epimorphism from a projective profinite group $H$, then there is some epimorphism $\pi' : H \to \tilde{G}$ with $\pi' \circ \pi = \rho$. 
\end{fact}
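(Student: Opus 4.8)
Since the statement is Proposition~22.6.1 of Fried--Jarden, the plan is to follow their construction. First I would develop the basic theory of Frattini covers: an epimorphism $\varphi \colon H \to G$ of profinite groups is a \emph{Frattini cover} if $\ker\varphi$ lies inside the Frattini subgroup $\Phi(H)$, i.e.\ inside the intersection of all maximal open subgroups of $H$, and the first lemma to establish is the equivalent reformulation that every \emph{closed} subgroup $H_{0}\le H$ with $\varphi(H_{0})=G$ is already equal to $H$. From this one extracts the only formal property needed below: a composition of Frattini covers is again a Frattini cover.

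For existence I would fix an epimorphism $f \colon \hat{F}\to G$ from a free profinite group $\hat{F}$ of rank at least the cardinality of some generating set of $G$; such $\hat{F}$ is projective. Then I would apply Zorn's lemma to the poset of closed subgroups $C\le\hat{F}$ with $f(C)=G$, ordered by reverse inclusion, to obtain a minimal one, which I call $\tilde{G}$, with $\pi:=f|_{\tilde{G}}\colon\tilde{G}\to G$. The only nonformal point in applying Zorn is the chain condition, which is a compactness argument: writing $K=\ker f$, for a descending chain $(C_{i})$ of such subgroups and any $x\in\hat{F}$, each set $xK\cap C_{i}$ is nonempty (because $C_{i}K=\hat{F}$) and closed, hence compact, so the intersection over the chain is nonempty, which shows $\bigl(\bigcap_{i}C_{i}\bigr)K=\hat{F}$. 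By minimality of $\tilde{G}$ and the reformulation above, $\pi$ is a Frattini cover. That $\tilde{G}$ is projective I would then obtain by invoking the theorem that a closed subgroup of a free profinite group is projective. (If one wishes to avoid importing that fact, the alternative is to realize $\tilde{G}$ instead as the inverse limit of the cofiltered system of \emph{all} finite Frattini covers of $G$ --- cofilteredness coming from replacing a fibre product $H_{1}\times_{G}H_{2}$ by a minimal closed subgroup surjecting onto $G$, using that the Frattini property of $H_{1}\to G$ forces the projection of that minimal subgroup onto $H_{1}$ to be onto --- and then to verify projectivity of the limit directly: given a finite embedding problem consisting of epimorphisms $\alpha\colon\tilde{G}\to A$ and $\beta\colon B\to A$, factor $\alpha$ through some finite Frattini cover $H$ occurring in the system, pick a minimal closed subgroup $H'$ of $H\times_{A}B$ surjecting onto $H$, observe that $H'\to H\to G$ is again a finite Frattini cover of $G$ and hence already occurs in the system, and read the solution off from the composite $\tilde{G}\to H'\hookrightarrow H\times_{A}B\to B$.)

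Finally I would verify the ``smallest'' property, which also yields uniqueness. Let $H$ be any projective profinite group with an epimorphism $\rho\colon H\to G$. Projectivity of $H$ provides a solution to the embedding problem given by $\rho\colon H\to G$ and $\pi\colon\tilde{G}\to G$, i.e.\ a continuous $\gamma\colon H\to\tilde{G}$ with $\pi\gamma=\rho$; since $\gamma(H)$ is then a closed subgroup of $\tilde{G}$ surjecting onto $G$ and $\pi$ is a Frattini cover, $\gamma(H)=\tilde{G}$, so $\gamma$ is the required epimorphism factoring $\rho$ through $\pi$. For uniqueness, if $(\tilde{G}_{1},\pi_{1})$ and $(\tilde{G}_{2},\pi_{2})$ both have this property, the two factorizations produce epimorphisms $\gamma\colon\tilde{G}_{2}\to\tilde{G}_{1}$ and $\delta\colon\tilde{G}_{1}\to\tilde{G}_{2}$ commuting with the maps to $G$; since a surjective continuous endomorphism of a profinite group is an automorphism, both $\gamma\delta$ and $\delta\gamma$ are automorphisms, so $\gamma$ and $\delta$ are mutually inverse isomorphisms over $G$.

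I expect the projectivity of $\tilde{G}$ to be the main obstacle: the existence of the minimal cover and the smallest/uniqueness clauses are just Zorn's lemma, a compactness argument, and formal properties of Frattini covers, whereas projectivity either requires the (nontrivial) theorem on closed subgroups of free profinite groups or a careful embedding-problem chase with fibre products inside the inverse-limit construction.
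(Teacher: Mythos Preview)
The paper does not prove this statement at all: it is stated as a \emph{Fact} with a citation to Fried--Jarden and is used as a black box. So there is nothing in the paper to compare your argument to; your outline is essentially the standard Fried--Jarden proof and is largely sound.

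There is, however, one genuine gap in your uniqueness argument. You assert that ``a surjective continuous endomorphism of a profinite group is an automorphism,'' but this Hopfian property only holds for \emph{finitely generated} profinite groups; it fails in general (e.g.\ the shift on an infinite product $\prod_{n\ge 1}\mathbb{Z}/p\mathbb{Z}$ is a continuous surjective non-injective endomorphism). Since the Fact is stated for arbitrary profinite $G$, you cannot appeal to this. The correct fix uses the very ingredients you already have: given surjections $\gamma:\tilde{G}_{2}\to\tilde{G}_{1}$ and $\delta:\tilde{G}_{1}\to\tilde{G}_{2}$ over $G$, use projectivity of $\tilde{G}_{1}$ to split the surjection $\delta$ (or, equivalently, of $\tilde{G}_{2}$ to split $\gamma$), obtaining a section $s:\tilde{G}_{2}\to\tilde{G}_{1}$ with $\delta s=\mathrm{id}$. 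Then $s(\tilde{G}_{2})$ is a closed subgroup of $\tilde{G}_{1}$ whose image in $G$ is all of $G$, so the Frattini property of $\pi_{1}$ forces $s(\tilde{G}_{2})=\tilde{G}_{1}$; thus $s$ is a bijection and $\delta=s^{-1}$ is an isomorphism. The rest of your outline (Zorn plus compactness for existence, Frattini reformulation, projectivity via closed subgroups of free profinite groups, and the lifting-plus-Frattini argument for the ``smallest'' clause) is correct as written.
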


\begin{lem}
    If $M = (V,K,Q)$ is an existentially closed model of $T^{0}_{Q,\omega}$, then $K$ is a PAC field with absolute Galois group $G(K) = \mathbb{Z}_{2}$, the free pro-2 group on a single generator.
\end{lem}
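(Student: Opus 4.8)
The plan is to establish two things: first, that the absolute Galois group $G(K)$ is pro-$2$ and procyclic (i.e. isomorphic to $\mathbb{Z}_2$ or a finite quotient thereof), and then upgrade this to $G(K) \cong \mathbb{Z}_2$ and PAC. I would argue each of these by a genericity/existential-closure argument, using the extension-of-scalars machinery (Lemma \ref{lem:QG_ext}) as the tool that lets me realize desired field extensions inside $M$.

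First I would show that $K$ has \emph{at most one} Artin-Schreier extension: this is exactly Lemma \ref{lem: one AS extn}, which applies since $M \models T^0_{Q,\omega}$, $K$ is perfect (being existentially closed, $K$ contains square roots — one checks $x \mapsto x^2$ is surjective by an existential-closure argument, using that $M'$ obtained by adjoining $\sqrt{a}$ via Lemma \ref{lem:QG_ext} is a valid extension when $K'/K$ is separable, hence perfectness is forced), and $Q(M) \neq \emptyset$. Combined with $\mathrm{char}(K) = 2$, the fact that $K$ has at most one degree-$2$ (Artin-Schreier, since we're in characteristic $2$) extension means, by Artin-Schreier theory, that $\wp(K)$ has index at most $2$ in $(K,+)$; iterating one shows $K$ has at most one extension of each $2$-power degree in the relevant tower, so $G(K)$ is a pro-$2$ procyclic group, i.e. a quotient of $\mathbb{Z}_2$. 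The key input from existential closure here is that $K$ genuinely \emph{has} a quadratic Artin-Schreier extension: if $\wp(K) = K$, one adjoins a root of $X^2 - X - b$ for transcendental $b$ to a suitable extension field $K'$ (which has no new Artin-Schreier roots over $K$ and at most one Artin-Schreier extension, so Lemma \ref{lem:QG_ext} applies) and embeds back, contradicting $\wp(K)=K$; so $G(K) \cong \mathbb{Z}_2$ exactly.

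Next I would show $K$ is PAC, i.e. every geometrically integral variety over $K$ has a $K$-point. The approach: given such a variety $W$ over $K$, pass to a field extension $K' \supseteq K$ over which $W$ acquires a point, chosen so that $K'/K$ is regular (so $K'$ introduces no new Artin-Schreier roots over $K$ and — crucially — $K'$ still has at most one Artin-Schreier extension; this last point needs care and may require first passing to a further extension to arrange the Galois-theoretic hypothesis of Lemma \ref{lem:QG_ext}, using that the universal Frattini cover of $\mathbb{Z}_2$ is $\mathbb{Z}_2$ itself). Apply Lemma \ref{lem:QG_ext} to get $M' \models T^0_{Q,\omega}$ with $K(M') = K'$ and $M \subseteq M'$. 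The statement "$W$ has a point" is existential in $L_{Q,\omega}$ over parameters in $M$, so existential closure of $M$ gives a point in $W(K)$. The alternative, cleaner route: invoke the characterization that a field is PAC with $G(K) = \mathbb{Z}_2$ iff it is "$\omega$-free-like" — but I'd rather keep it self-contained via the variety definition.

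The main obstacle I anticipate is the Galois-theoretic bookkeeping in the PAC step: Lemma \ref{lem:QG_ext} requires the extension field $K'$ to have \emph{at most one} Artin-Schreier extension, and an arbitrary regular extension $K'/K$ with a point on $W$ need not satisfy this. Resolving it requires showing that one can always further extend $K'$ to $K''$ that both retains the $K$-point on $W$ and has absolute Galois group exactly $\mathbb{Z}_2$ (or at least at most one Artin-Schreier extension) — this is a projectivity/embedding-problem argument using that $\mathbb{Z}_2$ is its own universal Frattini cover (the Fact quoted before the lemma), solving the embedding problem $\mathbb{Z}_2 \twoheadrightarrow G(K') \hookleftarrow$ (restriction maps), or more concretely observing that any field admits a regular extension whose absolute Galois group is $\mathbb{Z}_2$ — e.g. via a suitable function-field or power-series construction. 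Once that is in hand, the existential-closure step is immediate, and the two halves combine to give "$K$ is PAC with $G(K) = \mathbb{Z}_2$," which is the claim.
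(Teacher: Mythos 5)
Your overall architecture is right for the PAC half, but there is a genuine gap in the other half: you cannot deduce $G(K) \cong \mathbb{Z}_2$ from ``$K$ has exactly one Artin--Schreier extension.'' That implication is simply false. Pseudo-finite fields of characteristic $2$ have a unique extension of each degree, in particular a unique Artin--Schreier extension, yet their absolute Galois group is $\hat{\mathbb{Z}}$, not a quotient of $\mathbb{Z}_2$. Knowing $[K:\wp(K)] = 2$ controls only the $\mathbb{Z}/2$-quotient (the ``Frattini quotient'' in the pro-$2$ direction); it tells you nothing about the prime-to-$2$ part of $G(K)$, and nothing even about the higher $2$-part unless you already know $G(K)$ is pro-$2$, which you have not established. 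Your phrase ``iterating one shows $K$ has at most one extension of each $2$-power degree in the relevant tower'' does not follow from the single constraint on Artin--Schreier extensions.

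The paper closes this gap with a \emph{second} application of the extension-of-scalars argument, one which must come \emph{after} the PAC step because it uses projectivity of $G(K)$. Once $K$ is PAC, $G(K)$ is projective, so the universal Frattini cover $\pi : \mathbb{Z}_2 \twoheadrightarrow \mathbb{Z}/2\mathbb{Z}$ lifts through the surjection $G(K) \twoheadrightarrow \mathrm{Gal}(K(\alpha)/K)$ to give an injection $\rho' : \mathbb{Z}_2 \hookrightarrow G(K)$ (splitting the factorization through $\mathbb{Z}_2$). If the image $H' = \rho'(\mathbb{Z}_2)$ were a proper subgroup, its fixed field $K_* = (K^s)^{H'}$ would be a \emph{proper} algebraic extension of $K$ with $G(K_*) \cong \mathbb{Z}_2$, hence at most one Artin--Schreier extension, and with $\alpha \notin K_*$ (since $H'$ still surjects onto $\mathrm{Gal}(K(\alpha)/K)$), so no new Artin--Schreier roots of $K$-elements; extension of scalars to $K_*$ via Lemma \ref{lem:QG_ext} then produces a proper extension of $M$, contradicting existential closure. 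So $H' = G(K)$ and $G(K) \cong \mathbb{Z}_2$. Your proposal contains no such step and therefore does not actually prove the Galois group statement.

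For the PAC half, your approach matches the paper's in spirit, and you correctly anticipate the bookkeeping obstacle: an arbitrary regular $K'$ containing a point of $W$ need not have at most one Artin--Schreier extension, so Lemma \ref{lem:QG_ext} does not apply directly. The paper's resolution is exactly the Frattini/projectivity argument you gesture at, made concrete: start from a regular PAC extension $K'$ of $K$ (which exists, and carries points on all absolutely irreducible $K$-varieties), then descend algebraically to $K'' \subseteq (K')^s$ with $G(K'') \cong \mathbb{Z}_2$ and restriction onto $\mathrm{Gal}(K(\alpha)/K)$ surjective. Starting from a PAC regular extension, rather than a variety-specific $K'$, handles all varieties at once and gives the needed Galois control cleanly. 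Also, a small inaccuracy: adjoining $\sqrt{a}$ in characteristic $2$ is a purely \emph{inseparable} extension, so your parenthetical appeal to ``$K'/K$ separable'' in the perfectness argument is misstated; what one actually uses is that purely inseparable extensions are linearly disjoint from $K^s$ and do not alter the Artin--Schreier structure, so Lemma \ref{lem:QG_ext} applies.
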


\begin{proof}
    By Lemma \ref{lem: one AS extn}, $K$ has at most one Artin-Schreier extension and as $M$ is existentially closed, we know $K$ must have a unique Artin-Schrier extension $K(\alpha)$, so
    $$
    \mathcal{G}al(K(\alpha)/K) \cong \mathbb{Z}/2\mathbb{Z}.
    $$
    Let $K'$ be a regular extension of $K$ that is PAC (which exists by \cite[Proposition 13.4.6]{fried2008field}). The universal Frattini cover of $\mathbb{Z}/2\mathbb{Z}$ is free pro-2 of the same rank (by \cite[Proposition 22.7.6]{fried2008field} and \cite[Proposition 22.10.5]{fried2008field}), i.e. is $\mathbb{Z}_{2}$, so we can let $\pi : \mathbb{Z}_{2} \to \mathbb{Z}/2\mathbb{Z}$ be the universal Frattini cover of $\mathbb{Z}/2\mathbb{Z}$. Since $K'$ is PAC, the absolute Galois group $G(K')$ is projective and, since $K'$ is a regular extension of $K$, the restriction map $\mathrm{res}: G(K') \to \mathcal{G}al(K(\alpha)/K)$ is surjective.  Since the universal Frattini cover of $\mathbb{Z}/2\mathbb{Z}$ is the smallest projective profinite group with an epimorphsim to $\mathbb{Z}/2\mathbb{Z}$, there is some $\rho: G(K') \to \mathbb{Z}_{2}$ with $\mathrm{res} = \pi \circ \rho$. Since $\mathbb{Z}_{2}$ is projective, there is a map $\rho': \mathbb{Z}_{2} \to G(K')$ such that $\rho \circ \rho' = \mathrm{id}_{\mathbb{Z}_{2}}$.  Note that this forces $\rho'$ to be injective.  Let $H = \rho'(\mathbb{Z}_{2})$ and let $K'' = (K'^{s})^{H}$ be the fixed field of $H$.  Note that, as an algebraic extension of a PAC field, $K''$ is PAC (by \cite[Corollary 11.2.5]{fried2008field}). We have $G(K'') \cong \mathbb{Z}_{2}$, $K''$ has a unique Artin-Schrier extension (as $\mathbb{Z}_{2}$ has a unique $\mathbb{Z}/2\mathbb{Z}$ quotient), and the restriction $G(K'') \to \mathcal{G}al(K(\alpha)/K)$ is surjective. 

    Let $M''$ denote the extension of scalars of $M$ to $K''$.  By Lemma \ref{lem:QG_ext}, $M'' \models T^{0}_{Q,\omega}$ and the natural inclusion $M \to M''$ is an embedding of $L_{Q,\omega}$-structures. As every absolutely irreducible $K$-variety is, in particular, an absolutely irreducible $K''$ variety, we have that every absolutely irreducible $K$-variety has a $K''$-rational point and therefore a $K$-rational point, by existential closedness. This shows that $K$ is PAC.  

    Since we know that $K$ is PAC and therefore $G(K)$ is projective, we may repeat the above arguments, replacing $G(K'')$ with $G(K)$, to find an injective map $\rho': \mathbb{Z}_{2} \to G(K)$ whose image $H'$ surjects onto $\mathcal{G}al(K(\alpha)/K)$ via the restriction map. If $H'$ is a proper subgroup, then $K_{*} = (K^{s})^{H}$ is a proper algebraic extension with a unique Artin-Schrier extension, so the extension of scalars $M_{*}$ of $M$ to $K_{*}$ is a model of $T^{0}_{Q,\omega}$ and the natural inclusion $M \to M_{*}$ is an embedding of $L_{Q,\omega}$-structures. This contradicts the existential closedness of $M$.  Therefore, we conclude that $H' = G(K)$ so $G(K) \cong \mathbb{Z}_{2}$. 
\end{proof}

\subsection{The model companion}

In this section, we will define the model companions $T_{Q}$ and $T_{Q,\omega}$ of $T^{0}_{Q}$ and $T^{0}_{Q,\omega}$, respectively.  The theory $T_{Q}$ is axiomatized by $T^{0}_{Q}$ together with axioms asserting that $K \models \mathrm{ACF}_{2}$ and $V$ is infinite dimensional.  The axioms for $T_{Q,\omega}$ are a bit more involved and will be described later. 

\begin{lem} \label{lem:Wittswitch}
    Suppose $M \models T^{0}_{Q,\omega}$ is a model such that $K(M)$ has a unique quadratic Galois extension and $V(M)$ is infinite dimensional.  If $A \subseteq M$ is a finitely generated substructure and $q \in Q(M)$ is a quadratic form, then there is a quadratic form $q' \in Q(M)$ such that $q|_{V(A)} = q'|_{V(A)}$ and $\omega(q') = 1 - \omega(q)$.  
\end{lem}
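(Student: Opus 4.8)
The plan is to realize the desired form as $q' = q +_{Q} v$ for a well-chosen $v \in V(M)$, using that $V(M)$ acts regularly on $Q(M)$. Throughout, write $q(w)$ for $\beta_{Q}(q,w)$ and recall the axiom $\beta_{Q}(q +_{Q} v, w) = \beta_{Q}(q,w) + \beta_{V}(v,w)^{2}$. We look for $v$ with two properties: (i) $v \in V(A)^{\perp}$, the orthogonal complement being taken with respect to $\beta_{V}$ inside $V(M)$; and (ii) $q(v) \notin \wp(K(M))$. Granting such a $v$: property (i) gives $q'(w) = q(w) + \beta_{V}(v,w)^{2} = q(w)$ for all $w \in V(A)$, so $q'|_{V(A)} = q|_{V(A)}$; and since $q' -_{Q} q = v$ while, because $\beta_{V}$ is alternating, $q'(v) = q(v) + \beta_{V}(v,v)^{2} = q(v)$, the defining axiom for $\omega$ says that $\omega(q') = \omega(q)$ if and only if $q(v) \in \wp(K(M))$. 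By (ii) this equivalence fails, so, as $\omega$ is $\{0,1\}$-valued, $\omega(q') = 1 - \omega(q)$, which is what we want.

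So the work is to find $v \in U := V(A)^{\perp}$ with $q(v) \notin \wp(K(M))$. I would first record the preliminaries. Since $A$ is finitely generated, $V(A)$ is finite-dimensional over $K(A)$, so $X := \mathrm{Span}_{K(M)}(V(A))$ is finite-dimensional over $K(M)$; writing $U = X^{\perp}$, Fact \ref{fact:Aschbacher}(1)--(2) gives that $U$ has codimension $\dim X < \infty$ in $V(M)$, hence $U$ is infinite-dimensional. Its radical $R := U \cap U^{\perp}$ is contained in $U^{\perp} = (X^{\perp})^{\perp} = X$, so $R$ is finite-dimensional; in particular $R \neq U$, and any vector-space complement $W$ of $R$ in $U$ is a non-degenerate subspace of $(V(M),\beta_{V})$ with $W \neq 0$. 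Finally, $K(M)$ has a quadratic Galois extension, which in characteristic $2$ is Artin--Schreier, so $\wp(K(M)) \subsetneq K(M)$.

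Now assume, for a contradiction, that $q(u) \in \wp(K(M))$ for all $u \in U$. If $q$ is not identically zero on $W$, choose $w \in W$ with $q(w) \neq 0$; then $w \in U \setminus R$, so there is $u' \in U$ with $\beta_{V}(w,u') \neq 0$, and for every $\alpha \in K(M)$,
$$
\alpha\,\beta_{V}(w,u') = q(\alpha w + u') - q(\alpha w) - q(u') \in \wp(K(M)),
$$
since each term on the right lies in $\wp(K(M))$, an additive subgroup of $K(M)$; hence $K(M) = K(M)\cdot\beta_{V}(w,u') \subseteq \wp(K(M))$, a contradiction. If instead $q$ vanishes identically on $W$, then $\beta_{V}(w_{1},w_{2}) = q(w_{1}+w_{2}) - q(w_{1}) - q(w_{2}) = 0$ for all $w_{1},w_{2} \in W$, so $W \subseteq W^{\perp}$ and therefore $W = 0$ by non-degeneracy of $W$; but then $U = R$ is finite-dimensional, again a contradiction. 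Either way we reach a contradiction, so some $v \in U$ has $q(v) \notin \wp(K(M))$, and $q' := q +_{Q} v$ completes the proof.

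I do not expect a genuine obstacle here. The two points that require attention are: the $\omega$-bookkeeping — one must evaluate the $\omega$-axiom at $q'(v)$, which coincides with $q(v)$ precisely because $\beta_{V}$ is alternating — and, in the contradiction argument, isolating the \emph{linear} term $\alpha\,\beta_{V}(w,u')$ (rather than the ``diagonal'' term $\alpha^{2}q(w)$), which is what lets the argument go through over $K(M)$ without any perfectness assumption. (Alternatively, one could invoke Lemma \ref{lem:hyperbolic} applied inside the non-degenerate part $W$ of $V(A)^{\perp}$ to produce an honest hyperbolic plane $\langle e,f\rangle \subseteq V(A)^{\perp}$, on which $q(e+\gamma f)=\gamma$ realizes every value of $K(M)$, and then take $\gamma\notin\wp(K(M))$.)
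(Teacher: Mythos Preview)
Your proof is correct and shares the same skeleton as the paper's: reduce to finding $v \in V(A)^{\perp}$ with $q(v) \notin \wp(K(M))$, then set $q' = q +_{Q} v$. The difference is tactical. The paper finds $v$ constructively: it fixes $\alpha \in K(M) \setminus \wp(K(M))$, asserts (with little justification) that $V(A)^{\perp}$ contains a hyperbolic plane $\langle w,w'\rangle$ for $q$, and takes $v = \alpha w + w'$ so that $q(v) = \alpha$. Your main argument instead proceeds by contradiction, exploiting that $\wp(K(M))$ is an additive subgroup to show $q(U) \subseteq \wp(K(M))$ would force $K(M) = \wp(K(M))$. This is a genuine (if small) improvement: the paper's hyperbolic-plane step tacitly relies on perfectness of $K(M)$ (via Lemma~\ref{lem:hypext}/\ref{lem:hyperbolic}), which the statement does not assume, whereas your argument goes through for arbitrary $K(M)$ of characteristic $2$. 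Your parenthetical alternative at the end is essentially the paper's argument. One cosmetic remark: in Case~1 you never actually use $q(w) \neq 0$ beyond ensuring $w \neq 0$, so the case split on whether $q|_{W}$ vanishes is unnecessary---any nonzero $w \in W$ already lies outside $R$ and the linear-term computation goes through.
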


\begin{proof}
    Because $K(M)$ has a quadratic Galois extension, there is some $\alpha \in K(M) - \wp(K(M))$.  Let $v \in V(M)$ be an element of $V(A)^{\perp}$ such that $q(v) = \alpha$.  To see that this exists, note that, since $V(M)$ is infinite dimensional, there are $w,w' \in V(A)^{\perp}$ which span a hyperbolic plane.  Then 
    $$
    q(\alpha v + v') = \beta_{V}(\alpha v,v') + q(\alpha v) + q(v') = \alpha.
    $$
    Define $q' = q+\lambda_{v}^{2}$.  Then, since $q,v \in M$ and $q' = v +_{Q} q$, we have $q' \in Q(M)$ and, by definition of $\omega$, we have $\omega(q') \neq \omega(q)$, hence $\omega(q') = 1 - \omega(q)$. 
\end{proof}

\begin{prop} \label{prop: quadratic bnf}
    Suppose $M,N \models T^{0}_{Q}$ or $M,N \models T^{0}_{Q,\omega}$, $K(M)$ and $K(N)$ are models of ACF$_{2}$ or elementarily equivalent pseudo-finite fields of characteristic $2$ which are $\aleph_{0}$-saturated, and $V(M)$ and $V(N)$ are both infinite dimensional.  Then the set of isomorphisms between finitely generated structures $A \subseteq M$ and $B \subseteq N$ has the back-and-forth property. 
\end{prop}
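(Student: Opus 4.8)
The plan is to mimic the proof of Proposition \ref{prop:orthogonal bnf}, reducing to it by fixing a single quadratic form $q_{0}$ so as to view $M$ and $N$ as orthogonal spaces via the interpretation described above, running that back-and-forth, and then lifting the outcome to the sort $Q$ (and, in the $T^{0}_{Q,\omega}$ case, checking that $\omega$ is preserved). As there, the relevant back-and-forth set is that of isomorphisms $f=(f_{K},f_{V},f_{Q})$ --- also commuting with $\omega$ in the $L_{Q,\omega}$ case --- whose field part $f_{K}$ is partial elementary for $\mathrm{Th}_{L_{\mathrm{ring}}}(K(M))=\mathrm{Th}_{L_{\mathrm{ring}}}(K(N))$; this last requirement is automatic when $K(M)\models\mathrm{ACF}_{2}$, by quantifier elimination.

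Fix such an $f:A\to B$ and some $c\in M\setminus A$; I must produce $f'\supseteq f$ in the system with $c$ in its domain, the back direction being symmetric. The first step is to reduce to the case $Q(A)\neq\emptyset$: if $Q(A)=\emptyset$, choose any quadratic form $q_{*}$ on $V(A)$ associated to $\beta_{V}|_{V(A)^{2}}$, use Lemma \ref{lem:finding one form} to obtain $q_{0}\in Q(M)$ with $q_{0}|_{V(A)}=q_{*}$, and then, using $\aleph_{0}$-saturation of $K(N)$ together with Lemma \ref{lem:finding one form} on the $N$-side (and Lemma \ref{lem:Wittswitch} to align Witt defects in the $L_{Q,\omega}$ case --- in the $\mathrm{ACF}_{2}$ case $\omega$ is constant on $Q$, so one is tacitly assuming the two constant values agree, which is necessary for any isomorphism to exist), extend $f$ to an isomorphism of $\langle A,q_{0}\rangle$ with some $\langle B,q_{0}'\rangle$.

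Now assume $Q(A)\neq\emptyset$, fix $q_{0}\in Q(A)$, and put $q_{0}'=f_{Q}(q_{0})\in Q(B)$. Interpreting gives orthogonal spaces $O_{M}=(V(M),K(M),\beta_{Q}(q_{0},-))$ and $O_{N}=(V(N),K(N),\beta_{Q}(q_{0}',-))$, both modelling $T^{0}_{O}$ with infinite-dimensional vector space sort, and $f|_{L_{O}}$ is an isomorphism of the finitely generated substructures $A|_{L_{O}}\subseteq O_{M}$ and $B|_{L_{O}}\subseteq O_{N}$ with partial elementary field part. Proposition \ref{prop:orthogonal bnf} applies to $O_{M},O_{N}$, so: if $c\in K(M)$ or $c\in V(M)$, extend $f|_{L_{O}}$ to an $L_{O}$-isomorphism $g=(g_{K},g_{V}):C\to D$ of finitely generated substructures of $O_{M},O_{N}$ with $c\in\mathrm{dom}(g)$ and $g_{K}$ partial elementary; if $c\in Q(M)$, do the same with $c-_{Q}q_{0}\in V(M)$ in place of $c$, so that $c$ lands in the $Q$-part below. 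By (the arguments of) Lemmas \ref{lem:substructure} and \ref{lem:quadraticextensionofscalars}, the $L_{Q}$-substructure of $M$ generated by $C$ and $q_{0}$ has underlying sets $(K(C),V(C),\{q_{0}+_{Q}v:v\in V(C)\})$ --- the relevant $\beta_{V}$- and $\beta_{Q}$-values already lying in $K(C)$ since $C$ is an $L_{O}$-substructure --- and similarly in $N$; set $g_{Q}(q_{0}+_{Q}v)=q_{0}'+_{Q}g_{V}(v)$. A routine check shows $(g_{K},g_{V},g_{Q})$ respects $+_{Q}$, $-_{Q}$, and $\beta_{Q}$ --- the last using that $g$ carries $\beta_{Q}(q_{0},-)$ to $\beta_{Q}(q_{0}',-)$ and the identity $\beta_{Q}(q_{0}+_{Q}v,w)=\beta_{Q}(q_{0},w)+\beta_{V}(v,w)^{2}$ --- and that it extends $f$.

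The remaining point, in the $L_{Q,\omega}$ case, is that this lift respects $\omega$, and this is the one place where partial elementarity of the field part is essential: for $v\in V(C)$, $\omega^{M}(q_{0}+_{Q}v)=\omega^{M}(q_{0})$ iff $\beta_{Q}(q_{0},v)\in\wp(K(M))$, with $\beta_{Q}(q_{0},v)\in K(C)$, while $\omega^{N}(q_{0}'+_{Q}g_{V}(v))=\omega^{N}(q_{0}')$ iff $g_{K}(\beta_{Q}(q_{0},v))\in\wp(K(N))$; since $g_{K}$ is partial elementary and ``$\exists x\,(x^{2}-x=t)$'' is first-order, these are equivalent, and combined with $\omega^{M}(q_{0})=\omega^{N}(q_{0}')$ (valid since $q_{0}\in A$ and $f$ commutes with $\omega$) we obtain $\omega^{M}(q_{0}+_{Q}v)=\omega^{N}(g_{Q}(q_{0}+_{Q}v))$, as needed. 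The main obstacle I anticipate is organizing this reduction cleanly --- the choice of basepoint $q_{0}$, disposing of the degenerate case $Q(A)=\emptyset$, and the $\omega$-coherence bookkeeping --- after which Proposition \ref{prop:orthogonal bnf} supplies the substantive content.
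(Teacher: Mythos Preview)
Your proposal is correct and follows essentially the same route as the paper: reduce to the case $Q(A)\neq\emptyset$ via Lemmas \ref{lem:finding one form} and \ref{lem:Wittswitch}, fix a basepoint $q_{0}\in Q(A)$ to interpret orthogonal spaces, invoke Proposition \ref{prop:orthogonal bnf} to extend on the $K$ and $V$ sorts, and then lift to $Q$ by $q_{0}+_{Q}v\mapsto q_{0}'+_{Q}g_{V}(v)$. Your explicit verification that this lift respects $\omega$\textemdash using that $g_{K}$ is partial elementary and that membership in $\wp(K)$ is first-order\textemdash is a point the paper's proof asserts but does not spell out, so your write-up is in fact slightly more careful there; the parenthetical about the $\mathrm{ACF}_{2}$-with-$\omega$ edge case is also a fair observation about the statement, though it does not affect the argument for the cases of interest.
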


\begin{proof}
 Suppose $A \subseteq M$ and $B \subseteq N$ are finitely generated substructures and $f: A \to B$ is an isomorphism.

 First, we consider the case that $Q(A) = Q(B) = \emptyset$. Let $q_{0} : V(A) \to K(A)$ be an arbitrary quadratic form associated to $\beta_{V}|_{V(A)^{2}}$.  By Lemma \ref{lem:finding one form}, there is some $q \in Q(M)$ such that $q$ restricts to induce $q_{0}$ on $V(A)$.  Next, we define a form $q_{0}' :V(B) \to K(B)$ by pushing forward $q_{0}$ along the isomorphism $f$\textemdash that is, we set 
 $$
 q'_{0}(v) = f_{K}(q_{0}(f^{-1}_{V}(v)))
 $$
 for all $v \in V(B)$.  Again, applying Lemma \ref{lem:finding one form}, we obtain some $q' \in Q(N)$ such that $q'$ induces $q'_{0}$ when restricted to $V(B)$.  In the case that $M,N \models T^{0}_{Q,\omega}$, we may, by applying Lemma \ref{lem:Wittswitch}, assume that $\omega(q) = \omega(q')$.  Let $A' = \langle A,q \rangle$ and $B' = \langle B,q' \rangle$.  Note that the field and vector space sorts of $A$ and $A'$ are the same, and likewise for $B$ and $B'$.  Moreover, $f$ extends uniquely to an isomorphism $f' : A' \to B'$ with $q \mapsto q'$, as we may define 
 $$
 f'(q + \lambda^{2}_{v}) = q' + \lambda^{2}_{f(v)}
 $$
 for all $v \in V(A)$.  It is immediate by the definitions that this is an isomorphism.  Therefore, we have reduced to the case of extending $f:A \to B$ when $Q(A)$ and $Q(B)$ are non-empty.  Fix some $q_{*} \in Q(A)$ and let $q_{*}' \in Q(B)$ be defined by $q_{*}' = f(q_{*})$.  

 Suppose $c \in M  - A$ and we will extend $f$ to some $f'$ whose domain is a substructure of $M$ containing $c$.  First, suppose $c$ is in either the field or vector space sorts.  We then let $A_{-}$ be the substructure of the interpreted orthogonal space $(V(M), K(M),q_{*}) \models T_{O}$ generated by $K(A), V(A)$ and $c$.  By Proposition \ref{prop:orthogonal bnf}, there is an isomorphism $f_{-} : A_{-} \to B_{-}$ extending $f$ on these sorts, where $B_{-}$ is a substructure of the interpreted orthogonal space $(V(N), K(N),  q_{*}')$ containing $K(B)$ and $V(B)$.  Letting $A' = \langle A_{-},A\rangle$ and $B' = \langle B_{-},B \rangle$, we have an extension $f' : A' \to B'$ of $f$, defined by taking $f'$ to be $f_{-}$ on the field and vector space sorts, and then setting 
 $$
 f'(q_{*}+ \lambda^{2}_{v}) = q_{*}' + \lambda^{2}_{f_{-}(v)}
 $$
 for all $v \in V(A') = V(A_{-})$.  This gives the desired isomorphism.

 For the remaining case, we suppose $c \in Q(M)$.  Then $c = q_{*} + \lambda^{2}_{v}$ for some $v \in V(M) - V(A)$.  By the previous case, we may extend $f$ to an isomorphism $f:A' \to B'$ for $A'$ a finitely generated substructure of $M$ containing $A$ and $v$.  But then $c$ is in the domain of $f$.  This completes the proof. 
\end{proof}

We, then, obtain an analogue of Corollary \ref{cor:orthogonalstableembeddedness} for quadratic spaces.

\begin{cor} \label{cor:quadraticstableembeddedness}
Let $M = (V,K,Q) \models T^{0}_{Q}$ or $M = (V,K,Q) \models T^{0}_{Q,w}$  with $K$ an algebraically closed or pseudo-finite field of characteristic $2$ and $V$ infinite dimensional. Suppose $C$ is a substructure of $M$ and $a$ and $b$ are tuples from the field sort with $a \equiv_{K(C)} b$ in $\mathrm{Th}(K)$. Then $a \equiv_{C} b$ in $M$. 

\end{cor}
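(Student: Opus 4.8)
The plan is to run the proof of Corollary~\ref{cor:orthogonalstableembeddedness} with Lemma~\ref{lem:quadraticextensionofscalars} in place of Lemma~\ref{lem:substructure} and Proposition~\ref{prop: quadratic bnf} in place of Proposition~\ref{prop:orthogonal bnf}, carrying the extra $Q$-sort data along (and, in the $T^{0}_{Q,\omega}$ case, the function $\omega$). Working in a monster model $\mathbb{M}$ of $\mathrm{Th}(M)$: since $a,b$ are field tuples, set $K_{0} = \mathrm{acl}(aK(C))$ and $K_{1} = \mathrm{acl}(bK(C))$, with algebraic closure computed in $\mathrm{Th}(K(\mathbb{M}))$ (which is $\mathrm{ACF}_{2}$ or a completion of $\mathrm{PSF}_{2}$), and use $a \equiv_{K(C)} b$ to obtain a partial elementary isomorphism $\sigma_{K} : K_{0} \to K_{1}$ fixing $K(C)$ with $\sigma_{K}(a) = b$. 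By Lemma~\ref{lem:quadraticextensionofscalars}, after fixing some $q_{0} \in Q(C)$ (the case $Q(C) = \emptyset$ being trivial, since then $Q(\langle K_{i},C\rangle) = \emptyset$ as well), we have $\langle K_{i},C\rangle = (K_{i}\otimes_{K(C)} V(C),\ K_{i},\ \{v +_{Q} q_{0} : v \in K_{i}\otimes_{K(C)}V(C)\})$ for $i = 0,1$.

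The candidate isomorphism $\tilde{\sigma} : \langle K_{0},C\rangle \to \langle K_{1},C\rangle$ is then $\sigma_{K}$ on the field sort, $\sigma_{V} := \sigma_{K}\otimes\mathrm{id}_{V(C)}$ on the vector-space sort, and $\sigma_{Q} : q_{0}+_{Q}v \mapsto q_{0}+_{Q}\sigma_{V}(v)$ on the $Q$-sort (well-defined by regularity of the action, and fixing $V(C)$ and $Q(C)$ pointwise). I would then check that $\tilde{\sigma}$ is an $L_{Q}$-isomorphism fixing $C$ and sending $a$ to $b$: compatibility with the field operations, the abelian-group operations, scalar multiplication, $+_{Q}$ and $-_{Q}$ is immediate from the definitions, and compatibility with $\beta_{V}$ and $\beta_{Q}$ follows since on a basis of $V(C)$ these take values in $K(C)$, which $\sigma_{K}$ fixes, together with ($\sigma_{K}$-semi)bilinearity. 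As $\sigma_{K}$ is partial elementary in $\mathrm{Th}(K)$, the quantifier elimination down to the field sort coming from Proposition~\ref{prop: quadratic bnf} then yields $a \equiv_{C} b$ in $M$, exactly as in the orthogonal case.

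The one point needing care — and the only place where $T^{0}_{Q}$ and $T^{0}_{Q,\omega}$ diverge, hence what I expect to be the only genuine content beyond routine bookkeeping — is verifying that $\tilde{\sigma}$ respects $\omega$ in the $T^{0}_{Q,\omega}$ case; in the end this is not an obstacle. Since $\beta_{V}$ is alternating, $\beta_{Q}(q_{0}+_{Q}v,v) = \beta_{Q}(q_{0},v) + \beta_{V}(v,v)^{2} = \beta_{Q}(q_{0},v)$, so the defining axiom of $\omega$ gives $\omega(q_{0}+_{Q}v) = \omega(q_{0})$ iff $\beta_{Q}(q_{0},v) \in \wp(K_{0})$. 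Expanding $v$ over a basis of $V(C)$ and using that $\beta_{Q}(q_{0},\cdot)$ is a quadratic form with associated bilinear form $\beta_{V}$ (hence takes values in $K(C)$ on $V(C)$) gives $\beta_{Q}(q_{0},\sigma_{V}(v)) = \sigma_{K}(\beta_{Q}(q_{0},v))$, and a field isomorphism carries the image of the Artin--Schreier map on its domain onto that on its codomain, so $\beta_{Q}(q_{0},v)\in\wp(K_{0})$ iff $\beta_{Q}(q_{0},\sigma_{V}(v))\in\wp(K_{1})$. Thus $\omega(q_{0}+_{Q}v) = \omega(q_{0}+_{Q}\sigma_{V}(v))$ for all $v$, so $\tilde{\sigma}$ preserves $\omega$. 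Note that no analogue of the Artin--Schreier restrictions from Lemma~\ref{lem:QG_ext} is needed here: $\langle K_{i},C\rangle$ already sits inside $\mathbb{M}$ and inherits a well-defined $\omega$, and the reason the comparison works is precisely that $\tilde{\sigma}$ is built from a genuine field isomorphism rather than an arbitrary semilinear map.
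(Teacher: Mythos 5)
Your proposal is correct and follows the paper's own argument essentially verbatim: the same base $K_{0},K_{1}$, the same use of Lemma~\ref{lem:quadraticextensionofscalars} to identify $\langle K_{i},C\rangle$, the same $\tilde{\sigma}=(\sigma_{V},\sigma_{K},\sigma_{Q})$, and the same invocation of the quantifier elimination from Proposition~\ref{prop: quadratic bnf}. In fact your write-up is somewhat more careful than the paper's: the paper simply asserts that $\tilde{\sigma}$ is a $C$-isomorphism, while you explicitly verify the one step that is not completely automatic, namely that $\tilde{\sigma}$ respects $\omega$ in the $T^{0}_{Q,\omega}$ case, via the computation $\beta_{Q}(q_{0}+_{Q}v,v)=\beta_{Q}(q_{0},v)$ (using that $\beta_{V}$ is alternating) and the fact that the field isomorphism $\sigma_{K}$ carries $\wp(K_{0})$ onto $\wp(K_{1})$. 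The only microscopic imprecision is that the defining axiom of $\omega$ quantifies over all of $K(\mathbb{M})$, so the criterion is literally ``$\beta_{Q}(q_{0},v)\in\wp(K(\mathbb{M}))$'' rather than ``$\in\wp(K_{0})$''; but since $\beta_{Q}(q_{0},v)\in K_{0}$ and $K_{0}=\mathrm{acl}(aK(C))$ is relatively algebraically closed in $K(\mathbb{M})$ (in both the ACF and pseudo-finite cases), membership in $\wp(K(\mathbb{M}))$ and in $\wp(K_{0})$ coincide for elements of $K_{0}$, so the argument is fine; it would just be worth saying this explicitly.
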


\begin{proof}
    As in the proof of Corollary \ref{cor:orthogonalstableembeddedness}, we let $K_{0} = \mathrm{acl}(aK(C))$ and $K_{1} = \mathrm{acl}(bK(C))$, with algebraic closure here computed in $\mathrm{Th}(K(\mathbb{M}))$ and let $\sigma_{K} : K_{0} \to K_{1}$ be a partial elementary isomorphism of $K_{0}$ and $K_{1}$ over $K(C)$ with $\sigma_{K}(a) = b$.
    
    If $Q(C) = \emptyset$, then the conclusion already follows from the argument of Corollary \ref{cor:orthogonalstableembeddedness}, so we may assume we have some $q_{0} \in Q(C)$.  By Lemma \ref{lem:quadraticextensionofscalars}, we have  $\langle K_{i},C \rangle = (V_{i}, K_{i},Q_{i})$, where $V_{i} = K_{i} \otimes_{K(C)} V(C)$ and $Q_{i} = \{v +_{Q} q_{0} : v \in V_{i}\}$, for $i = 0,1$. Then we define an isomorphism $\sigma_{V} : K_{0} \otimes_{K(C)} C \to K_{1} \otimes_{K(C)} C$ to be the unique map that sends $\alpha \otimes c\mapsto \sigma(\alpha) \otimes c$ for all $\alpha \in K_{0}$ and $c \in C$. Likewise, we define a map $\sigma_{Q} :Q_{0} \to Q_{1}$ by mapping $v +_{Q} q_{0} \mapsto \sigma_{V}(v) +_{Q} q_{0}$. Then $\tilde{\sigma} = (\sigma_{V},\sigma_{K},\sigma_{Q})$ defines a $C$-isomorphism from $\langle K_{0},C \rangle$ to $\langle K_{1},C \rangle$ which is partial elementary in the field sort. Hence by quantifier elimination, we have $a \equiv_{C} b$. 
\end{proof}

\subsection{Pseudo-finite models}

\begin{lem} \label{lem: finite iso}
    Suppose $M,N \models T^{0}_{Q,\omega}$ with $K(M) = K(N)$ a perfect field and $\mathrm{dim}(V(M)) = \mathrm{dim}(V(N)) = 2n$. Then $M \cong N$. 
\end{lem}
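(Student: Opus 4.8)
The plan is to produce the isomorphism $f = (\phi,\mathrm{id}_{K},f_{Q})$ by hand, where $K := K(M) = K(N)$, the component $\phi : V(M) \to V(N)$ is a $K$-linear map preserving the alternating form, and $f_{Q}$ is the induced bijection on the $Q$-sort. First I would fix $q_{0} \in Q(M)$. Since a non-degenerate alternating bilinear space of dimension $2n$ over any field admits a symplectic basis, there is a $K$-linear isomorphism $\phi : V(M) \to V(N)$ with $\beta_{V}^{N}(\phi x,\phi y) = \beta_{V}^{M}(x,y)$ for all $x,y$. Transporting $q_{0}$ along $\phi$ gives a quadratic form $q_{*}$ on $V(N)$ whose associated bilinear form is exactly $\beta_{V}^{N}$ (this is where symplecticness of $\phi$ is used), and by the argument of Lemma \ref{lem:finding one form} (which needs only that $V(N)$ is finite-dimensional over $K$ and that $\beta_{V}$ is non-degenerate) there is $r_{0} \in Q(N)$ with $\beta_{Q}^{N}(r_{0},-) = q_{*}$. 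Thus $\phi$ is an isometry $(V(M),\beta_{Q}^{M}(q_{0},-)) \to (V(N),\beta_{Q}^{N}(r_{0},-))$ of $2n$-dimensional orthogonal spaces over $K$.

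The key point is that the Witt defects now match automatically: by the defining axioms of $T^{0}_{Q,\omega}$ in dimension $2n$ (equivalently, by Remark \ref{rem: Wittdef}), $\omega^{M}(q_{0})$ equals $0$ precisely when $(V(M),\beta_{Q}^{M}(q_{0},-))$ is hyperbolic, and likewise for $\omega^{N}(r_{0})$; since hyperbolicity is an isometry invariant, $\omega^{M}(q_{0}) = \omega^{N}(r_{0})$. This is why there is no need to choose $r_{0}$ carefully to match a prescribed Witt defect. Now define $f_{Q} : Q(M) \to Q(N)$ by $f_{Q}(q_{0} +_{Q} v) = r_{0} +_{Q} \phi(v)$; this is well-defined and bijective because $V(M)$ and $V(N)$ act regularly on $Q(M)$ and $Q(N)$ and $\phi$ is a bijection.

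It remains to check that $f = (\phi,\mathrm{id}_{K},f_{Q})$ respects every function symbol of $L_{Q,\omega}$, which is routine. As a $K$-linear isomorphism, $\phi$ respects the vector-space operations, scalar multiplication, the independence predicates $\theta_{n}$ and the coordinate functions $\pi_{n,i}$; $\beta_{V}$ is respected by the choice of $\phi$; $+_{Q}$ and $-_{Q}$ are respected because $\phi$ is additive; $\beta_{Q}$ is respected because $\beta_{Q}^{M}(q_{0} +_{Q} v,w) = \beta_{Q}^{M}(q_{0},w) + \beta_{V}^{M}(v,w)^{2}$ and the analogous identity in $N$ reduce the verification to $\beta_{Q}^{N}(r_{0},\phi w) = \beta_{Q}^{M}(q_{0},w)$ and $\beta_{V}^{N}(\phi v,\phi w) = \beta_{V}^{M}(v,w)$, both of which hold by construction; and $\omega$ is respected because the axiom $\omega(q_{1}) = \omega(q_{2}) \Leftrightarrow \beta_{Q}(q_{1},q_{1} -_{Q} q_{2}) \in \wp(K)$ reduces the claim for $q = q_{0} +_{Q} v$ to the chain $\omega^{M}(q) = \omega^{M}(q_{0}) \Leftrightarrow \beta_{Q}^{M}(q_{0},v) \in \wp(K) \Leftrightarrow \beta_{Q}^{N}(r_{0},\phi v) \in \wp(K) \Leftrightarrow \omega^{N}(f_{Q}(q)) = \omega^{N}(r_{0})$, together with $\omega^{M}(q_{0}) = \omega^{N}(r_{0})$ from the previous paragraph and the fact that $\omega$ is $\{0,1\}$-valued. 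I do not expect a genuine obstacle here: the only substantive inputs are the existence of a symplectic basis and Lemma \ref{lem:finding one form}, and the one subtlety worth flagging is precisely that the compatibility of the $\omega$-functions is forced once $\phi$ has been arranged to be an isometry of the associated orthogonal spaces, rather than something that needs to be engineered separately. The same argument, omitting the verification for $\omega$, proves the corresponding statement for $M,N \models T^{0}_{Q}$.
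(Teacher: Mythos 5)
Your proof is correct and takes a genuinely different route from the paper's. The paper first uses Lemma~\ref{lem:finding one form} to find $q \in Q(M)$ and $q' \in Q(N)$ whose associated orthogonal spaces $(V(M),q)$ and $(V(N),q')$ are both \emph{hyperbolic} (hence $\omega(q)=\omega(q')=0$ by the axioms), and then appeals to the isometry of hyperbolic spaces of the same dimension over the same field to get $f_V$. You instead start from an arbitrary $q_0 \in Q(M)$, build a symplectic isomorphism $\phi$ from the classification of non-degenerate alternating forms, transport $q_0$ along $\phi$, and then invoke Lemma~\ref{lem:finding one form} once (on $N$) to lift the transported form back into $Q(N)$; the Witt-defect compatibility is then automatic because $\phi$ is, by construction, an isometry of the resulting orthogonal spaces and $\omega$ is an isometry invariant. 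What your version buys: it avoids having to explicitly manufacture a hyperbolic quadratic form on each side and avoids the appeal to the classification of orthogonal spaces (you only need the symplectic classification, which is uniform over all fields), and it makes visible, rather than sidesteps by normalization, the reason $\omega$ is preserved. What the paper's version buys: by forcing both Witt defects to $0$ it reduces the verification to a one-liner and reuses the same isometry invariance fact that is already quoted in the proof of Proposition~\ref{prop:orthclass}. One cosmetic remark: Lemma~\ref{lem:finding one form} is stated for a finitely generated substructure $A$, and you are in effect applying it with $V(A)=V(N)$; as you flag, this is harmless because the proof of that lemma only uses that $V(A)$ is finite-dimensional and $\beta_V$ is non-degenerate, both of which hold here.
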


\begin{proof}
    Since $K(M) = K(N)$ is a perfect field, we may apply Lemma \ref{lem:finding one form} to find $q \in Q(M)$ and $q' \in Q(N)$ such that $(V(M),q)$ and $(V(N),q')$ are both hyperbolic (and hence $\omega(q) = \omega(q') = 0$). Let $f_{V} : V(M) \to V(N)$ be an isometry between $(V(M),q)$ and $(V(N),q')$. Define $f_{Q} : Q(M) \to Q(N)$ by 
    $$
    f_{Q}(v +_{Q} q) = f_{V}(v) +_{Q} q',
    $$
    or, equivalently considering the elements of the $Q$ sorts as functions, $f_{Q}$ maps $q+\lambda_{v}^{2} \mapsto q' + \lambda^{2}_{f_{V}(v)}$. Setting $f_{K} = \mathrm{id}_{K(M)}$, then defines an isomorphism $f : ( V(M), K(M),Q(M)) \to (V(N), K(N), Q(N))$. 
\end{proof}

\begin{thm}
    Suppose $M \models T^{0}_{Q}$ or $M \models T^{0}_{Q,\omega}$ is pseudo-finite with $Q(M) \neq \emptyset$.  Then $\mathrm{Th}(M)$ is axiomatized, modulo $T^{0}_{Q}$ or $T^{0}_{Q,\omega}$, respectively, by one of the following:
    \begin{enumerate}
        \item The theory asserting that $K$ is a finite field of cardinality $2^{k}$, $V$ has infinite dimension, and $Q(M) \neq \emptyset$.
        \item The theory asserting that $\mathrm{Th}(K)$ is a completion of the theory of pseudo-finite fields, $Q(M) \neq \emptyset$, and one of the following holds:
        \begin{enumerate}
            \item $V$ is of finite dimension $2n$. 
            \item $V$ is of infinite dimension. 
        \end{enumerate}
    \end{enumerate}
    Moreover, all of these possibilities are realized. 
\end{thm}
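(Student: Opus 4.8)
The plan is to mimic the proof of Proposition~\ref{prop:orthclass}, splitting into the same three regimes. First I would observe that, choosing any $q_{*}\in Q(M)$, the interpreted orthogonal space $(V(M),K(M),q_{*})$ has $\beta_{V}$ as its (non-degenerate) associated bilinear form, so by Fact~\ref{fact:Aschbacher}(5) the dimension of $V$, when finite, is even, say $2n$; and in that case $M$ is interpretable in $K(M)$ with all sorts finite-dimensional, so $M$ is pseudo-finite exactly when $K(M)$ is an infinite pseudo-finite field, landing us in case~(2)(a). To see that (2)(a) is a complete theory, take $M_{1},M_{2}\models T^{0}_{Q}$ (resp.\ $T^{0}_{Q,\omega}$) with $\mathrm{Th}(K)$ a fixed completion of $\mathrm{PSF}_{2}$ and $\dim V=2n$; passing to ultrapowers and applying Keisler--Shelah \cite[Theorem 6.1.15]{keisler1990model} one arranges that the field sorts become isomorphic, and then Lemma~\ref{lem: finite iso} (whose proof applies verbatim to $T^{0}_{Q}$, ignoring $\omega$) yields $M_{1}\cong M_{2}$. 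In contrast to the orthogonal case, no Witt-defect parameter is needed: by Lemma~\ref{lem:finding one form}, over a perfect field the sort $Q(M)$ contains quadratic forms of both Witt defects, so it ``looks the same'' in any two models with the same field and dimension.

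For $\dim V=\infty$ with $K$ finite---case~(1)---the theory $T^{0}_{Q}$ (resp.\ $T^{0}_{Q,\omega}$) together with ``$|K|=2^{k}$'', ``$\dim V=\infty$'', and ``$Q\neq\emptyset$'' is complete; this is contained in \cite{cherlin2003finite} and is in any event easier than the pseudo-finite-field case, since here the field cannot grow in extensions. For $\dim V=\infty$ with $K$ an infinite pseudo-finite field---case~(2)(b)---completeness of $T^{0}_{Q}$ (resp.\ $T^{0}_{Q,\omega}$) together with a fixed completion of $\mathrm{PSF}_{2}$, ``$\dim V=\infty$'', and ``$Q\neq\emptyset$'' is immediate from Proposition~\ref{prop: quadratic bnf}: passing to sufficiently saturated elementary extensions (which preserves elementary equivalence of the fields and infinite-dimensionality of $V$), the substructures generated by $\emptyset$ are both $(\{0\},\mathbb{F}_{2},\emptyset)$, and the back-and-forth of Proposition~\ref{prop: quadratic bnf} then gives elementary equivalence.

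For the ``moreover'' clause I would realize each case by an ultraproduct of finite quadratic geometries. Fix a nonprincipal ultrafilter $\mathcal{U}$ on $\omega$ and, for each $i$, let $M_{i}$ be the (unique up to isomorphism) finite quadratic geometry on a $2n_{i}$-dimensional space over a finite field $F_{i}$ of characteristic $2$, equipped in the $T^{0}_{Q,\omega}$ case with its Witt-defect function $\omega$, which is definable in finite dimension by Remark~\ref{rem: Wittdef}. Taking $F_{i}=\mathbb{F}_{2^{k}}$ and $n_{i}\to\infty$ gives an ultraproduct that is pseudo-finite with $|K|=2^{k}$ and $\dim V=\infty$, hence realizes case~(1) by the completeness above; taking $n_{i}=n$ fixed and the $F_{i}$ chosen so that $\prod_{\mathcal{U}}F_{i}$ models the prescribed completion of $\mathrm{PSF}_{2}$ realizes case~(2)(a); and taking such $F_{i}$ with $n_{i}\to\infty$ realizes case~(2)(b). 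Each such ultraproduct satisfies the $\omega$-axioms of $T^{0}_{Q,\omega}$ because they hold in every $M_{i}$, and the dimension-indexed axioms tying $\omega$ to the Witt defect become vacuous once $\dim V=\infty$.

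The step I expect to be the main obstacle is the bookkeeping around $\omega$ in the $T^{0}_{Q,\omega}$ case, namely verifying that the classification acquires no invariant beyond those listed. In finite dimension this is exactly the content of Lemma~\ref{lem: finite iso}, resting on the existence of hyperbolic (Witt defect $0$) forms over any perfect field; in infinite dimension it is the Witt-switching Lemma~\ref{lem:Wittswitch}, which is precisely what makes the back-and-forth of Proposition~\ref{prop: quadratic bnf} insensitive to $\omega$; and one must also make sure the ultraproducts in the realization step genuinely model $T^{0}_{Q,\omega}$, not merely $T^{0}_{Q}$.
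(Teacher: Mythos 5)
Your proposal is correct and follows essentially the same route as the paper: reduce the finite-dimensional case (2)(a) to Lemma~\ref{lem: finite iso} via Keisler--Shelah, appeal to Proposition~\ref{prop: quadratic bnf} for the infinite-dimensional pseudo-finite-field case (2)(b), and cite the Cherlin--Hrushovski finite-field analysis for case (1). Your remark that the absence of a Witt-defect invariant (in contrast to Proposition~\ref{prop:orthclass}) is exactly what Lemma~\ref{lem:finding one form} and Lemma~\ref{lem:Wittswitch} buy you is a correct and useful gloss that the paper leaves implicit, and your ultraproduct realization of the three cases is a slightly more explicit version of the paper's closing sentence.
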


\begin{proof}
    If $\mathrm{Th}(M)$ falls into case (1), then $\mathrm{Th}(M)$ is pseudo-finite and $\aleph_{0}$-categorical and hence complete by CITE.  Hence we may assume $K(M)$ is an infinite field. Suppose now that $\mathrm{Th}_{L_{\mathrm{rings}}}(K(M)) = T_{K}$, some fixed completion of PSF$_{2}$.
    
    We handle the finite-dimensional case. Suppose we are given $M_{0}, M_{1}$ models of $T^{0}_{Q}$ or of $T^{0}_{Q,\omega}$, which have the property that $K(M_{i}) \models T_{K}$ and $V(M_{i})$ has dimension $2n$, for $i = 0,1$. Recall by Remark \ref{rem: Wittdef}, if the $M_{i}$ are merely assumed to be models of $T^{0}_{Q}$, they have a definition expansion to models of $T^{0}_{Q,\omega}$ so there is no difference in this case. As elementarily equivalent structures have isomorphic ultrapowers, we may, after replacing $M_{0}$ and $M_{1}$ by ultrapowers, assume that $K(M_{0}) = K(M_{1})$. Then $M_{0} \cong M_{1}$ by Lemma \ref{lem: finite iso}.  Then $M_{0} \equiv M_{1}$.  This shows that the theory axiomatized by $T^{0}_{Q}$ (or $T^{0}_{Q,\omega}$) together with the assertions that $K \models T_{K}$ and $V$ has dimension $2n$ is complete. Since $V$ has finite dimension over $K$, it is also easily seen to be interpretable in the pseudo-finite field hence pseudo-finite. 

    The remaining case is where the theory asserts that $K \models T_{K}$ and $V$ has infinite dimension. As an ultraproduct of structures of type (2)(a), with unbounded dimension, it is clearly pseudo-finite. And by Proposition \ref{prop: quadratic bnf}, it is complete. It is easy to see that all the possibilities are realized, so we are done. 
\end{proof}

\subsection{Approximation and relative homogeneity}

\begin{thm} \label{thm:quadrelcat}
 Let $T$ be $T_{Q}$, $T_{Q,\omega}$, or a pseudo-finite completion of $T^{0}_{Q,\omega}$ with $V$ infinite dimensional. 
\begin{enumerate}
    \item (Relative $\aleph_{0}$-categoricity) Suppose $M,N \models T$ with $K(M) \cong K(N)$ perfect fields of characteristic $2$. If $V(M)$ and $V(N)$ are $\aleph_{0}$-dimensional and $Q(M)$ and $Q(N)$ are both non-empty, then $M \cong N$. 
    \item (Relative homogeneity) Suppose $M \models T$ with $V(M)$ $\aleph_{0}$-dimensional. If $f : A \to B$ is an isomorphism of two substructures of $M$ which are finitely generated over $K(M)$, then there is $\sigma \in \mathrm{Aut}(M/K(M))$ extending $f$. 
    \item (Relative smooth approximability) Suppose $M \models T$, and $V(M)$ $\aleph_{0}$-dimensional. For each $i < \omega$, choose substructures $M_{i} \subseteq M$ a substructure with $K(M_{i})  = K(M)$ and with $V(M_{i})$ a $2i$-dimensional non-degenerate subspace of $V(M)$ such that $Q(M_{i}) \neq \emptyset$, $M_{i} \subseteq M_{i+1}$, and $M = \bigcup_{i < \omega} M_{i}$. Then, for each $i$ and for all finite tuples $a,b \in M_{i}$, $a$ and $b$ are in the same $\mathrm{Aut}(M/K(M))$ orbit if and only if they are in the same $\mathrm{Aut}_{\{M_{i}\}}(M/K(M))$ orbit. 
\end{enumerate}
Here $\mathrm{Aut}_{\{M_{i}\}}(M/K(M))$ refers to the elements of $\mathrm{Aut}(M/K(M))$ that fix $M_{i}$ setwise. 
\end{thm}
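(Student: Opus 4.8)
The plan is to follow the template of Theorem~\ref{thm:relcat}, substituting the quadratic-geometry refinements from Proposition~\ref{prop: quadratic bnf} and Lemma~\ref{lem: finite iso} wherever the orthogonal-space arguments are used; the one genuinely new complication is keeping the Witt-defect function $\omega$ coherent.

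For~(1) I would reduce to $K(M) = K(N) = K_{*}$ and, using $Q(M), Q(N) \neq \emptyset$, fix $q_{0} \in Q(M)$ and $q_{0}' \in Q(N)$, so that $\mathrm{id}_{K_{*}}$ together with $q_{0} \mapsto q_{0}'$ is an isomorphism between the substructures $\langle K_{*}, q_{0}\rangle$ and $\langle K_{*}, q_{0}'\rangle$, each finitely generated over $K_{*}$. I claim the set of all isomorphisms between substructures of $M$ and $N$ finitely generated over $K_{*}$ has the back-and-forth property; since $V(M)$ and $V(N)$ are $\aleph_{0}$-dimensional, this yields $M \cong N$. Given such an $f : A \to B$ and $c \in M \setminus A$, the equality $K(A) = K_{*}$ forces $c$ into the vector or $Q$ sort. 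If $Q(A) = \emptyset$, I would enlarge $A$ and $B$ exactly as in the opening of the proof of Proposition~\ref{prop: quadratic bnf}: choose a quadratic form on $V(A)$ with associated bilinear form $\beta_{V}$, extend it to $q \in Q(M)$ by Lemma~\ref{lem:finding one form}, push it forward along $f$ and extend to $q' \in Q(N)$, and — when $\omega$ is present — apply Lemma~\ref{lem:Wittswitch} to arrange $\omega(q) = \omega(q')$ before extending $f$. With $Q(A), Q(B) \neq \emptyset$, fix $q_{*} \in Q(A)$ and $q_{*}' = f(q_{*})$; for $c$ in the vector sort I would pass to the interpreted orthogonal spaces $(V(M), K_{*}, q_{*})$ and $(V(N), K_{*}, q_{*}')$ and run Case~3 of Proposition~\ref{prop:orthogonal bnf}: after enlarging $V(A)$ to a non-degenerate even-dimensional subspace, use Lemma~\ref{lem:hyperbolic} (or Lemma~\ref{lem:hypext}(1) when $K_{*} \models \mathrm{ACF}_{2}$) to reach a finite-dimensional hyperbolic subspace $H \ni c$ of $V(M)$ and a hyperbolic $H' \subseteq V(N)$ of the same dimension with $f(V(A)) \subseteq H'$, then apply Witt's Lemma (Fact~\ref{Witt}) inside $(H, q_{*})$ to obtain an isometry $g : (H, q_{*}) \to (H', q_{*}')$ extending $f|_{V(A)}$, and extend $f$ to $\langle K_{*}, H, q_{*}\rangle \to \langle K_{*}, H', q_{*}'\rangle$ by $v \mapsto g(v)$ on $V$ and $q_{*} +_{Q} v \mapsto q_{*}' +_{Q} g(v)$ on $Q$; one checks this respects $\beta_{Q}$ (as $g$ preserves $\beta_{V}$) and $\omega$ (as $q_{*}(v) \in \wp(K_{*}) \iff q_{*}'(g(v)) \in \wp(K_{*})$ and $\omega(q_{*}) = \omega(q_{*}')$, since $\omega$ is a function symbol and $f$ an $L_{Q,\omega}$-isomorphism). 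For $c$ in the $Q$ sort I would write $c = q_{*} +_{Q} v$ with $v \in V(M) \setminus V(A)$ and reduce to the previous case.

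Part~(2) should be the $M = N$ instance of the back-and-forth system from~(1), once one notes that the substructures of $M$ finitely generated over $K(M)$ are exactly the finite-dimensional $K(M)$-subspaces of $V(M)$ together with their induced $Q$-part. For~(3) I would first prove a finite-dimensional ``Witt's Lemma for quadratic geometries'': if $M_{0}$ is a finite-dimensional model of $T^{0}_{Q}$ (or $T^{0}_{Q,\omega}$) over a perfect field $K$ with at most one quadratic Galois extension and $Q(M_{0}) \neq \emptyset$, then every isomorphism $f : A \to B$ of substructures of $M_{0}$ with $f|_{K} = \mathrm{id}$ extends to $\mathrm{Aut}(M_{0}/K)$. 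Granting this, for $a, b \in M_{i}$ in the same $\mathrm{Aut}(M/K(M))$-orbit, the map $a \mapsto b$ is an isomorphism of the substructures they generate over $K(M)$ inside $M_{i}$; extend it to $\sigma \in \mathrm{Aut}(M_{i}/K(M))$, then extend $\sigma$ to $\tilde\sigma \in \mathrm{Aut}(M/K(M))$ by~(2); since $\sigma(M_{i}) = M_{i}$, $\tilde\sigma$ fixes $M_{i}$ setwise, so $a, b$ lie in the same $\mathrm{Aut}_{\{M_{i}\}}(M/K(M))$-orbit (the converse being immediate). To prove the finite-dimensional statement I would reduce to $Q(A) \neq \emptyset$ as above (if $Q(A) = \emptyset$, extend $f|_{V(A)}$ to a symplectic automorphism $\tilde f_{V}$ of $V(M_{0})$ by the symplectic Witt lemma and transport any $q_{*} \in Q(M_{0})$ along it, using that the pushforward lies in $Q(M_{0})$ and that isometric forms have equal Witt defect); then, with $q_{*} \in Q(A)$ and $q_{*}' = f(q_{*})$, the spaces $(V(M_{0}), q_{*})$ and $(V(M_{0}), q_{*}')$ have equal dimension, the same associated bilinear form $\beta_{V}$, and — since $\omega$ is absolute between $A$ and $M_{0}$ and, by Remark~\ref{rem: Wittdef}, records hyperbolicity in finite dimensions — equal Witt defect, hence are isometric by the classification used in the proof of Proposition~\ref{prop:orthclass} (\cite[Proposition~21.2]{aschbacher2000finite}, valid over perfect fields with a unique quadratic Galois extension); composing with an automorphism from Witt's Lemma gives an isometry $g$ of $(V(M_{0}), q_{*})$ onto $(V(M_{0}), q_{*}')$ extending $f|_{V(A)}$, and extending $f$ by $g$ on $V$ and $q_{*} +_{Q} v \mapsto q_{*}' +_{Q} g(v)$ on $Q$ finishes it.

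The main obstacle, just as in Theorem~\ref{thm:relcat}, is not the linear algebra (which is Witt's Lemma plus the classification of quadratic forms over perfect fields with a unique quadratic Galois extension) but the coherence of the Witt-defect function $\omega$ across the field, vector, and $Q$ sorts and across every step of the back-and-forth — exactly the feature absent from the orthogonal-space case and from Granger's bilinear setting. I expect this to be handled by calling on Lemma~\ref{lem:Wittswitch} whenever a new quadratic form must be created from scratch, and, in the finite-dimensional step, by the facts that $\omega$ is a function symbol (hence absolute between a substructure and its ambient model) and, in finite dimensions, is an isometry invariant.
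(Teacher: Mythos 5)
Your proposal follows the same core strategy as the paper — lift the orthogonal-space results of Theorem~\ref{thm:relcat} through a chosen form $q$, extend to the $Q$ sort by the regular action, and track the Witt defect $\omega$ — and it works in all the cases the theorem actually covers. The decomposition differs in two places. For~(1), you run a full back-and-forth; the paper just applies Theorem~\ref{thm:relcat}(1) (plus Remark~\ref{rem:slight}) to the interpreted orthogonal spaces $(V(M),q_0)$, $(V(N),q_0')$ with $\omega(q_0)=\omega(q_0')$, and then transports $f_V$ to $Q$ by $v+_Q q_0 \mapsto f_V(v)+_Q q_0'$, which is shorter. For~(3), you package the key step as a ``finite-dimensional Witt's Lemma for quadratic geometries''; the paper does the equivalent calculation inline, choosing $q\in Q(A)$ hyperbolic (with $\omega(q)=0$ when $\omega$ is present) and then combining the orthogonal Witt's Lemma on $V(M_i)$ with part~(2).

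One caveat about your intermediate claim: as stated, the finite-dimensional Witt lemma is asserted for models of $T^0_Q$ over any perfect field with at most one quadratic Galois extension, and in that generality it is \emph{false} when the field actually has a quadratic extension and $\omega$ is not in the language — this is precisely the content of the paper's Remark~3.11 immediately following the theorem. The substructures $\langle K,q\rangle$ and $\langle K,q'\rangle$ with $q,q'$ of different Witt defect are $L_Q$-isomorphic (both trivially), but no automorphism of a finite-dimensional $M_0$ sends $q$ to $q'$. Your justification that ``$\omega$ is absolute between $A$ and $M_0$'' is only valid when $\omega$ is a function symbol; Remark~\ref{rem: Wittdef} only gives a quantified $L_Q$-definition, which a substructure isomorphism need not preserve. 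This does not hurt the application, since the only $T^0_Q$-flavoured theory in the theorem's hypotheses is $T_Q$, where $K\models\mathrm{ACF}_2$ has no quadratic extension and every finite-dimensional form is hyperbolic; but you should restrict the statement of the lemma accordingly (to $T^0_{Q,\omega}$, or to $T^0_Q$ over quadratically closed fields).
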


\begin{proof}
    (1) We may assume $K(M) = K(N) = K_{*}$. Choose $q_{0} \in Q(M)$ and $q'_{0} \in Q(N)$.  Then, by Theorem \ref{thm:relcat}(1) (and by Remark \ref{rem:slight}, in the case that $T = T_{Q,\omega}$), the interpreted orthogonal spaces $(V(M),q_{0})$ and $(V(N),q'_{0})$ are isomorphic over $K_{*}$. Let $f_{V} : V(M) \to V(N)$ denote the $K_{*}$-isomorphism. Then we define a bijection $f_{Q} : Q(M) \to Q(N)$ by 
    $$
    f_{Q}(v +_{Q} q_{0}) = f_{V}(v) +_{Q} q'_{0}.
    $$
    It is easy to check that this is an isomorphism.  

    (2) Suppose we are given $f: A \to B$, a $K(M)$-isomorphism of substructures of $M$, where $A$ and $B$ are finitely generated over $K(M)$. We first consider the case that $Q(A) = Q(B) = \emptyset$. Then, as in the proof of Proposition \ref{prop: quadratic bnf}, we let $q_{0}\in Q(M)$ be an arbitrary quadratic form and we define a form $q_{0}' :V(B) \to K(B)$ by pushing forward $q_{0}$ along the isomorphism $f$\textemdash that is, we set 
 $$
 q'_{0}(v) = q_{0}(f^{-1}(v))
 $$
 for all $v \in V(B)$.  Applying Lemma \ref{lem:finding one form}, we find some $q' \in Q(N)$ such that $q'$ induces $q'_{0}$ when restricted to $V(B)$.  Moreover, in the case that $M,N \models T^{0}_{Q,\omega}$, we may, by applying Lemma \ref{lem:Wittswitch}, assume that $\omega(q) = \omega(q')$.  Setting $A' = \langle A,q \rangle$ and $B' = \langle B,q' \rangle$, we see $f$ extends uniquely to an isomorphism $f' : A' \to B'$ with $q \mapsto q'$, as we may define 
 $$
 f'(v+_{Q} q) = v +_{Q} q' 
 $$
 for all $v \in V(A)$.  

 Next, we note that by (the proof of) Theorem \ref{thm:relcat}(1), the isomorphism of orthogonal spaces $(V(A'),q) \to (V(B'),q')$ extends to a $K(M)$-isomorphism of orthogonal spaces $\sigma :(V(M),q) \to (V(M),q')$.  We define $\tau \in \mathrm{Aut}(M/K(M))$ to be the identity on the field sort and $\sigma$ on the vector space sort.  On the quadratic form sort, $\tau$ is defined so that, for all $v \in V(M)$, 
 $$
 \tau(v +_{Q} q) = \sigma(v) +_{Q} q'. 
 $$
 This gives the desired automorphism.

    (3) Suppose $a$ and $b$ are finite tuples from $M_{i}$ which are in the same $\mathrm{Aut}(M/K(M))$-orbit. Let $A$ and $B$ be the substructures of $M_{i}$ generated over $K(M)$ by $a$ and $b$ respectively. Choose $q \in Q(A)$ such that $(V(A),q)$ is hyperbolic and, in the pseudo-finite case where the Witt defect is present, choose $q$ so that $\omega(q) = 0$).  We define $q' = f(q) \in Q(B)$. Then we know that the orthogonal spaces $(V(M_{i}),q)$ and $(V(M_{i}),q')$ are both hyperbolic (either because $K(M)$ is algebraically closed, or because $f$ respects the Witt defect), and so there is a $K(M)$-isomorphism of orthogonal spaces $g : (V(M_{i}),q) \to (V(M_{i}), q')$.  By Witt's Lemma, we may assume $g$ extends $f$ on $V(M_{i})$. By (the proof of) Theorem \ref{thm:relcat}(1), there is some $K(M)$-isomorphism of orthogonal spaces $\sigma : (V(M),q) \to (V(M),q')$ extending $g$. Now we define $\tau \in \mathrm{Aut}(M/K(M))$ to be the identity on $K(M)$, to be $\sigma$ on the vector space sort, and defined so that 
    $$
    \tau(v +_{Q} q) = \sigma(v) +_{Q} q'
    $$
    for all $v \in V(M)$.  We know that every element of $Q(M_{i})$ has the form $v +_{Q} q$ for some $v \in V(M_{i})$ and $\sigma$ fixes $V(M_{i})$ setwise.  Hence, for all $v \in V(M_{i})$, we have $\tau(v+_{Q} q) = \sigma(v) +_{Q} q' \in Q(M_{i})$. Therefore, $\tau \in \mathrm{Aut}_{\{M_{i}\}}(M)$. Similarly, we know that every element of $Q(A)$ has the form $v +_{Q} q$ for some $v \in V(A)$ and, since $f$ is an isomorphism mapping $q \mapsto q'$, $f(v+_{Q} q) = f(v) +_{Q} q'$.  Since $\sigma$ was chosen to extend $f$ on the vector space sort, we get that $\tau$ extends $f$.  This shows that $a$ and $b$ are in the same $\mathrm{Aut}_{\{M_{i}\}}(M/K(M))$ orbit.
\end{proof}

\begin{rem}
    Theorem \ref{thm:quadrelcat}(1) and (2) also go through unchanged but the statement of Theorem \ref{thm:quadrelcat}(3) becomes false in the case of $M$ being a pseudo-finite model of $T^{0}_{Q}$ with $V(M)$ $\aleph_{0}$-dimensional, since the Witt defect is not present in this case.  The issue is that if $M_{i} \subseteq M$ is a finite dimensional substructure and $q,q' \in Q(M_{i})$ are quadratic forms such that the Witt defect of $(V(M_{i}),q)$ and $(V(M_{i}),q')$ differ, then there can be no automorphism of $M_{i}$ that maps $q \mapsto q'$.  However, the infinite dimensional orthogonal spaces $(V(M),q)$ and $(V(M),q')$ are isomorphic and this is witnessed by an automorphism of $M$ mapping $q \mapsto q'$.  In this situation, we see that $q$ and $q'$ are in the same $\mathrm{Aut}(M/K(M))$ orbit, but not the same $\mathrm{Aut}_{\{M_{i}\}}(M/K(M))$ orbit.  This phenomenon plays an important role in the theory of smoothly approximable structures, showing that smooth approximability is not preserved under reduct. 
\end{rem}

\section{Neostability} \label{section:neostability}

To prove NFOP$_{1}$ and NSOP$_{1}$, it is enough to prove this for the quadratic geometries, since the orthogonal spaces are interpretable in them. For NFOP$_{2}$, we will only consider the case when the associated field is algebraically closed as pseudo-finite fields \cite{beyarslan2010random} and, more generally, all non-separably closed PAC fields have IP$_{n}$ for all $n$ \cite{hempel2016n}. 

\subsection{NFOP$_{2}$}

\begin{defn}
  We say a formula $\varphi(x_{1}, x_{2},x_{3})$ has \emph{FOP}$_{2}$ if there are tuples $(a_{\eta})_{\eta \in \omega^{\omega}}$ and $(b_{i},c_{i})_{i < \omega}$ such that, for all $\eta \in \omega^{\omega}$, $i,j < \omega$, we have 
  $$
  \models \varphi(a_{\eta}, b_{i},c_{j}) \iff j \leq \eta(i). 
  $$
  We say that a theory $T$ is NFOP$_{2}$ if no formula has FOP$_{2}$ modulo $T$. 
\end{defn}

\begin{fact} \label{fact:compositionlemma} \cite[Theorem 2.16]{abdaldaim2023higher}
    Suppose $T$ is an $L$-theory, $L_{0} \subseteq L$, and $T \upharpoonright L_{0}$ is stable. let $\theta(w_{1}, \ldots, w_{n})$ be an $L_{0}$-formula.  Suppose $x,y,$ and $z$ are disjoint tuples of variables and, for each $i = 1, \ldots, n$, $u_{i}$ is an ordered pair from $\{x,y,z\}$ and $f_{i}(u_{i})$ is an $L$-definable function. Then the $L$-formula 
    $$
    \varphi(x,y,z) = \theta(f_{1}(u_{1}), \ldots, f_{n}(u_{n}))
    $$
    is NFOP$_{2}$ modulo $T$. 
\end{fact}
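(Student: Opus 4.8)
The plan is to argue by contradiction: supposing the composite $\varphi(x,y,z)=\theta(f_1(u_1),\ldots,f_n(u_n))$ has FOP$_2$, I would manufacture an $L_0$-formula with the independence property, contradicting stability of $T\upharpoonright L_0$. The only structural feature of the composition that will matter is that each $u_i$ lists at most two of the three variable tuples $x,y,z$; this is the analogue, raised by one dimension, of the trivial observation that $\theta(g_1(x),g_2(y))$ is stable whenever $\theta$ is and $g_1,g_2$ depend on a single variable each.

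Suppose then that $\varphi$ has FOP$_2$, witnessed by $(a_\eta)_{\eta\in\omega^\omega}$ and $(b_i,c_i)_{i<\omega}$ with $\models\varphi(a_\eta,b_i,c_j)\iff j\le\eta(i)$. First I would run the standard extraction (Erd\H{o}s--Rado together with the modeling property for trees, after re-indexing $\omega^\omega$ by a tree): since FOP$_2$ is encoded by a consistent set of formulas, one may assume in addition that $(b_i)_{i<\omega}$ and $(c_j)_{j<\omega}$ are mutually $L$-indiscernible --- each indiscernible over the other, though not over the $a_\eta$'s --- and that $(a_\eta)_\eta$ is as homogeneous as the tree index permits, all while preserving the pattern. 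Sorting the subterms $f_k(u_k)$ by which of $x,y,z$ occur in $u_k$, I would then rewrite
\[
\varphi(x,y,z)\;=\;\Theta\bigl(P(x),\,R(y),\,S(z),\,T(x,y),\,U(x,z),\,W(y,z)\bigr),
\]
with $\Theta$ an $L_0$-formula (a reindexing of $\theta$) and $P,R,S,T,U,W$ the $L$-definable tuples of subterms depending on, respectively, $x$ only, $y$ only, $z$ only, $x$ and $y$, $x$ and $z$, $y$ and $z$. The point to isolate is that there is no seventh block: nothing depends on all three of $x,y,z$.

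The base case is that an $L_0$-formula $\chi(w^1;w^2;w^3)$, with its variables partitioned arbitrarily into three blocks, is NFOP$_2$ once $T\upharpoonright L_0$ is stable: a putative pattern $\chi(A_\eta,B_i,C_j)\iff j\le\eta(i)$, read at $j=1$ with $\eta_S:=\mathbf{1}_{S}$ for $S\subseteq\omega$, yields $\chi(A_{\eta_S},B_i,C_1)\iff i\in S$, so the $L_0$-formula $\chi(w^1;w^2;C_1)$ has IP, contradicting stability. The task is thus to push the FOP$_2$ pattern of $\varphi$ down to such a pattern for $\theta$. The singly-mixed blocks $P(a_\eta),R(b_i),S(c_j)$ slot directly into the three parties; the doubly-mixed blocks $T(a_\eta,b_i),U(a_\eta,c_j),W(b_i,c_j)$ belong to two parties each and must instead be shown redundant. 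Here I would combine mutual indiscernibility of $(b_i),(c_j)$ with stability of $T\upharpoonright L_0$ to argue that along the array their $L_0$-types over the remaining parameters become, after one further finite extraction of the indices, constant or eventually constant, so that they may be absorbed into $\Theta$ as fixed parameters. What remains is then a genuine three-partitioned $L_0$-formula exhibiting FOP$_2$, which the base case forbids.

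The hard part is precisely this last absorption step. The naive reduction ``freeze two of $x,y,z$ and vary the third'' does not suffice, because freezing $c_j$ still leaves the honestly mixed term $U(a_\eta,c_j)$ --- now an $L$-definable, not $L_0$-definable, function of $a_\eta$, about which $L_0$-stability says nothing on the nose. The fix is to run the one-dimensional stability arguments along $(b_i)$ and along $(c_j)$ in the correct order, playing them against mutual indiscernibility so that neither run spoils the indiscernibility the other needs; this works exactly because the $a_\eta$-dependence of $\varphi$ passes only through $P(a_\eta),T(a_\eta,b_i),U(a_\eta,c_j)$, none of which couples $b_i$ to $c_j$ --- i.e.\ because no $f_k$ reads all three variables. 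Carrying out the bookkeeping --- lining up the quantifier alternations, the successive extractions, and the ``eventually constant'' statements --- is the real content; everything else is formal.
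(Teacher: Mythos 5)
The paper does not prove this statement; it is stated as a Fact with a citation to Theorem~2.16 of \cite{abdaldaim2023higher}, so there is no in-paper proof to compare against. I will therefore assess your sketch on its own merits.

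Your preliminary steps are sound. The decomposition of $\varphi$ into six blocks $P(x),R(y),S(z),T(x,y),U(x,z),W(y,z)$ according to which variable tuples appear is exactly the right first move, and your base-case reduction is correct: if a pure three-partitioned $L_0$-formula $\chi(w^1;w^2;w^3)$ had FOP$_2$, then freezing $j=1$ and letting $\eta$ range over indicator functions $\mathbf{1}_S$ exhibits IP for the $L_0$-formula $\chi(w^1;w^2;C_1)$, contradicting stability of $T\upharpoonright L_0$. You have also correctly located the crux in the doubly-mixed blocks.

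The gap is in the absorption step, and it is not a matter of bookkeeping. The claim that, after extraction, the doubly-mixed blocks ``become constant or eventually constant'' in $L_0$-type over the remaining parameters, and so may be ``absorbed into $\Theta$ as fixed parameters,'' is not a consequence of stability plus mutual indiscernibility, and read literally it is simply too strong: it would say that after extraction the truth value of $\varphi$ no longer depends on $T(x,y),U(x,z),W(y,z)$ at all. Take already the minimal nontrivial case $\varphi(x,y,z)=\theta(T(x,y),W(y,z))$. Mutual indiscernibility of $(b_i),(c_j)$ places no constraint forcing $\mathrm{tp}_{L_0}(W(b_i,c_j)/\text{anything relevant})$ to stabilize; the array $(W(b_i,c_j))_{i,j}$ is a genuinely two-index object and can vary with both indices (think of $W$ as addition in an algebraically closed field with $(b_i),(c_j)$ mutually generic). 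The mechanism that actually kills FOP$_2$ in this minimal case is different: fix $i$; since $j\le\eta(i)$ depends on $\eta$ only through the single value $m=\eta(i)$, choose one $\eta_m$ with $\eta_m(i)=m$ for each $m$ and set $\tau_m=T(a_{\eta_m},b_i)$, $\sigma_j=W(b_i,c_j)$; then $\theta(\tau_m,\sigma_j)\iff j\le m$ exhibits the order property for $\theta$, contradicting stability of $T\upharpoonright L_0$. The contradiction is to OP, not IP, and nothing gets ``absorbed'' -- the cross-terms are evaluated, not eliminated.

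In the general case the obstruction you yourself flag, namely $U(x,z)$, is precisely the term that breaks the naive fix-one-index argument: after fixing $i$, the block $U(a_\eta,c_j)$ still couples the two remaining indices, and one cannot simply rewrite $\varphi$ as a two-partitioned $L_0$-formula. This is where the real content lies, and your sketch gestures at ``running the one-dimensional arguments in the correct order'' without specifying what that order is or why it closes the argument; as written it asserts the key step rather than proving it. So this is a genuine gap: the absorption heuristic is the wrong mechanism (it cannot be true in the form stated), and the correct mechanism (reducing to an $L_0$-order or $L_0$-independence pattern after fixing an index) still has to contend with the $U$-block, which your sketch does not actually handle.
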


For technical reasons, we will have to prove NFOP$_{2}$ of $T_{Q}$ in a language $L_{Q,q_{*}}$ which adds a constant $q_{*}$ to the $Q$ sort. We let $L_{-}$ be the language $L_{Q}$ without the symbols $\beta_{Q}$ and $\beta_{V}$. The theory $T_{-}$ will assert that $K \models \mathrm{ACF}_{2}$, $V$ is an infinite dimensional vector space over $K$, $+_{Q} : V \times Q \to Q$ is a regular action.  Moreover, there is an axiom asserting that for all $q \in Q$ and $v \in V$, $(v+_{Q} q) -_{Q} q = v$. It is easy to see that $T_{-}$ is stable and complete. 

Let $\mathcal{B}$ denote the following set of $L_{Q,q_{*}}$-terms: $\beta_{Q}(q_{*},x)$, $\beta_{Q}(q_{*},x-_{Q} q_{*})$, $\beta_{Q}(q_{*},x-_{Q}y)$, $\beta_{V}(x,y)$, $\beta_{V}(x,y-_{Q} q_{*})$, and $\beta_{V}(x-_{Q} q_{*}, y-_{Q}q_{*})$. Note that every term in $\mathcal{B}$ is at most binary function valued in $K$. We say that an $L_{Q,q_{*}}$-term $t$ is \emph{acceptable} if $t$ is equal modulo $T_{Q,q_{*}}$ to an $L_{-,q_{*}}$-term $s$ in which some of its field sort variables have been replaced by terms from $\mathcal{B}$. For example, the term $t(x,y,z) = \beta_{V}(x+y,z)$ is acceptable because there is an $L_{-,q_{*}}$ term $s(u,v) = u+v$ such that
$$
T_{Q,q_{*}} \vdash t(x,y,z) = s(\beta_{V}(x,z),\beta_{V}(y,z)).
$$
We will eventually show that every term of $L_{Q,q_{*}}$ is acceptable, which will enable us to apply the Composition Lemma, Fact \ref{fact:compositionlemma}, to conclude that $T_{Q,q_{*}}$, and hence $T_{Q}$, is NFOP$_{2}$. 

\begin{lem} \label{lem:acceptablelem}
    Suppose $b(x,y) \in \mathcal{B}$ and $t,t'$ are $L_{Q,q_{*}}$-terms, valued in the sorts of $x$ and $y$, respectively. Then the term $b(t,t')$ is acceptable (or $b(x) \in \mathcal{B}$ and $b(t)$ is acceptable, in the case that $b$ is unary). 
\end{lem}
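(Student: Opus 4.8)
The plan is to prove Lemma~\ref{lem:acceptablelem} by induction on the complexity (total number of function-symbol occurrences) of $t$ and $t'$, running it in tandem with the statement that \emph{every} $L_{Q,q_{*}}$-term is acceptable; the latter is needed because the computation below extracts arbitrary field-valued subterms as coefficients, and those must themselves be known to be acceptable. Two preliminary remarks will be used throughout. First, acceptability is preserved under applying an $L_{-,q_{*}}$-function symbol to acceptable terms and under substituting $L_{-,q_{*}}$-terms for field-sort variables, since these operations only enlarge the $L_{-,q_{*}}$-skeleton. Second — the key normalisation step — every $V$-valued $L_{Q,q_{*}}$-term is equal, modulo $T^{0}_{Q}$, to a polynomial (built using $+$ and scalar multiplication, with $L_{Q,q_{*}}$-terms as coefficients) in the \emph{atoms} consisting of the $V$-variables, the expressions $r -_{Q} q_{*}$ for $Q$-variables $r$, and $0$. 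This follows by a structural induction on the term using $q_{*} -_{Q} q_{*} = 0$, $\;r -_{Q} r' = (r -_{Q} q_{*}) + (r' -_{Q} q_{*})$, and $(r_{0} +_{Q} v) -_{Q} q_{*} = (r_{0} -_{Q} q_{*}) + v$, the last identity being what lets one push every $+_{Q}$ out from under a $-_{Q}$ so that $-_{Q}$ survives only applied to $Q$-variables or to $q_{*}$.

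Now take $b \in \mathcal{B}$ and arguments $t, t'$ of the appropriate sorts. First bring the relevant $V$-valued expression into the normal form above: namely $t$ (and $t'$) itself when that argument slot of $b$ is a vector slot, and $r -_{Q} q_{*}$ (respectively $t -_{Q} t' = (t -_{Q} q_{*}) + (t' -_{Q} q_{*})$) when the slot is a $Q$-slot carrying a $Q$-term $r$ (respectively when $b = \beta_{Q}(q_{*}, x -_{Q} y)$). If $b$ is one of the $\beta_{V}$-forms in $\mathcal{B}$, distribute $\beta_{V}$ over this normal form using bilinearity and pull out the coefficients; what remains is a $K$-valued combination of expressions $\beta_{V}(a,a')$ with $a,a'$ atoms, and each of these is $0$ (if an atom is $0$) or, according to the atom types — a $V$-variable or an $r -_{Q} q_{*}$ in each slot — a substitution instance of $\beta_{V}(x,y)$, $\beta_{V}(x, y -_{Q} q_{*})$, or $\beta_{V}(x -_{Q} q_{*}, y -_{Q} q_{*})$, all of which lie in $\mathcal{B}$. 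If instead $b$ is a $\beta_{Q}(q_{*}, -)$-form, use the quadratic-form identity $\beta_{Q}(q_{*}, \sum_{i} c_{i} a_{i}) = \sum_{i} c_{i}^{2}\beta_{Q}(q_{*}, a_{i}) + \sum_{i<j} c_{i} c_{j} \beta_{V}(a_{i},a_{j})$ to reduce to a $K$-valued combination of terms $\beta_{Q}(q_{*}, a)$ — each an instance of $\beta_{Q}(q_{*}, x)$ or $\beta_{Q}(q_{*}, x -_{Q} q_{*})$, or $0$ — and terms $\beta_{V}(a,a')$, handled as before. In every case the coefficients appearing are field-sort subterms of $t$ or $t'$, hence of strictly smaller complexity than $b(t,t')$ and thus acceptable by the induction hypothesis, while the $\beta_{V}$/$\beta_{Q}$-pieces are substitution instances of members of $\mathcal{B}$; by the closure remark the entire combination is acceptable. (The statement ``every term is acceptable'' then follows in parallel: a variable or $q_{*}$ is trivially acceptable, an application of an $L_{-}$-symbol is handled by closure, $\beta_{V}(\tau_{1},\tau_{2})$ is an instance of this lemma with $b = \beta_{V}(x,y)$, and $\beta_{Q}(\tau_{1},\tau_{2})$ is first rewritten as $\beta_{Q}(q_{*},\tau_{2}) + \beta_{V}(\tau_{1} -_{Q} q_{*}, \tau_{2})^{2}$ via $\beta_{Q}(q_{*} +_{Q} v, w) = \beta_{Q}(q_{*}, w) + \beta_{V}(v,w)^{2}$ and then handled by this lemma.)

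The main obstacle is the normalisation step: naive term length does not decrease when $r -_{Q} r'$ is rewritten as $(r -_{Q} q_{*}) + (r' -_{Q} q_{*})$, so the induction must be arranged so that $+_{Q}$-occurrences nested beneath $-_{Q}$ are eliminated first — most cleanly by proving the normal-form statement as a standalone structural induction (or with a lexicographic measure prioritising elimination of $+_{Q}$ inside $-_{Q}$) and then never disturbing $-_{Q}$ again. The other point to verify carefully is exactly the one the definition of $\mathcal{B}$ is engineered to make true: after all the pushing, every surviving application of $\beta_{V}$ or $\beta_{Q}$ is applied only to atoms, so that the residual $\beta$-terms are precisely substitution instances of the six listed members of $\mathcal{B}$ — in particular that the cross-terms $\beta_{V}(a_{i},a_{j})$ produced by polarising $\beta_{Q}(q_{*},-)$, and the term $\beta_{V}(v,w)^{2}$ produced when reducing a general $\beta_{Q}$ to $\beta_{Q}(q_{*},-)$, are of the admissible shapes. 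Everything else is routine manipulation with the defining axioms of $T^{0}_{Q}$ and $T^{0}_{Q,\omega}$.
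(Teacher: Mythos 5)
Your proof is correct, but it organizes the argument quite differently from the paper's. The paper argues by a direct structural induction on the outermost function symbol of $t$: for each of the term-forming operations that can produce $t$ (namely $-_{Q}$, $+$, scalar multiplication, and $+_{Q}$), it expands $b(t,t')$ via the defining identities of $T^{0}_{Q}$ into a combination of $\mathcal{B}$-expressions applied to the direct subterms of $t$, treating the six members of $\mathcal{B}$ case by case. You instead isolate an explicit normal-form lemma --- every $V$-valued term reduces modulo $T^{0}_{Q}$ to a $K$-linear combination of the atoms $v$, $r -_{Q} q_{*}$, and $0$ --- after which all six members of $\mathcal{B}$ are dispatched uniformly by a single application of bilinearity (for $\beta_{V}$) or the polarisation identity (for $\beta_{Q}(q_{*},-)$). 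This separates the term-rewriting from the linear-algebra bookkeeping and is arguably the cleaner argument.

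You also deserve credit for making explicit a point that the paper's two-lemma presentation leaves implicit. In the case $t = t_{0}\cdot t_{1}$, the reduction $\beta_{V}(t,t') = t_{0}\cdot\beta_{V}(t_{1},t')$ requires the $K$-valued subterm $t_{0}$ to be acceptable, but $t_{0}$ is an arbitrary $L_{Q,q_{*}}$-term and may itself involve $\beta_{V}$ or $\beta_{Q}$; the inductive hypothesis of the present lemma says nothing about it. The paper's Lemma~\ref{lem:acceptable} supplies what is needed, but its proof in turn cites the present lemma, so on a literal reading the two are circularly linked. Your resolution --- running the present lemma in tandem with ``every $L_{Q,q_{*}}$-term is acceptable'' under a single induction on total term complexity --- is the correct way to close this loop, and is almost certainly what the authors intended. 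Finally, your worry about term length failing to decrease under the rewrite $r -_{Q} r' \rightsquigarrow (r -_{Q} q_{*}) + (r' -_{Q} q_{*})$ is well placed, and your proposed fix of proving the normal form as a standalone structural induction (so that $-_{Q}$ is never disturbed afterwards and only its $Q$-sort arguments are normalised) is exactly right.
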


\begin{proof}
    We will prove this by induction on the term $b$. We will assume that $t_{0}, t_{1}$, and $t'$ are terms such that the conclusion has been established and then obtain the conclusion for the various choices for $b \in B$ and ways of constructing $t$ from $t_{0}$ and $t_{1}$. 

    In the first case, we will assume $t_{0}, t_{1}$ are $Q$-valued and we will prove the statement for $t = t_{0} -_{Q} t_{1}$ and $t'$. If $b = \beta_{Q}(q_{*},x)$, then we have, modulo $T_{Q,q_{*}}$
    \begin{eqnarray*}
        \beta_{Q}(q_{*}, t_{0}-_{Q}t_{1}) &=& \beta_{Q}(q_{*}, (t_{0}-_{Q}q_{*}) + (q_{*}-_{Q} t_{1})) \\
        &=& \beta_{Q}(q_{*}, t_{0}-_{Q} q_{*}) + \beta_{Q}(q_{*}, t_{1}-_{Q} q_{*}) + \beta_{V}(t_{0}-_{Q}q_{*}, t_{1}-_{Q}q_{*}),
    \end{eqnarray*}
    which shows, by induction, that $b$ is equal to a sum of three acceptable terms and is therefore acceptable.

    Arguing similarly, we have 
    \begin{eqnarray*}
    \beta_{V}(t_{0}-_{Q} t_{1},t') &=& \beta_{V}((t_{0}-_{Q}q_{*}) + (q_{*}-_{Q} t_{1}), t') \\
    &=& \beta_{V}(t_{0}-_{Q}q_{*}, t') + \beta_{V}(t_{1} -_{Q} q_{*},t')
    \end{eqnarray*}
    which is a sum of acceptable terms, hence acceptable. The term $\beta_{V}(t_{0}-_{Q}t_{1}, t'-_{Q} q_{*})$ is acceptable by an identical argument.  This covers all cases where the term $t = t_{0} -_{Q} t_{1}$. 

    The other cases where $t$ is valued in the $V$ sort are $t = t_{0} \cdot t_{1}$ for $t_{0}$ $K$-valued and $t_{1}$ $V$-valued, and $t = t_{0} + t_{1}$, for $t_{0}, t_{1}$ $V$-valued. It is easy to show, in these cases, that $\beta_{Q}(q_{*},t)$, $\beta_{V}(t,t')$ and $\beta_{V}(t,t'-_{Q} q_{*})$ are acceptable, using that $\beta_{Q}(q_{*},x)$ and $\beta_{V}(x,y)$ are quadratic and bilinear forms, respectively.

    Now we consider the second case, where $t$ is valued in the $Q$ sort. Then $t = t_{0} +_{Q} t_{1}$, where $t_{0}$ is $V$-valued and $t_{1}$ is $Q$-valued. We have $4$ terms in $\mathcal{B}$ to handle: $\beta_{Q}(q_{*}, t-_{Q} q_{*})$, $\beta_{Q}(q_{*}, t-_{Q}t')$, $\beta_{V}(t-_{Q}q_{*}, t')$, and $\beta_{V}(t-_{Q} q_{*}, t'-_{Q}q_{*})$.  We will first show $\beta_{Q}(q_{*}, t-_{Q} q_{*})$ is acceptable.  We calculate 
    \begin{eqnarray*}
        \beta_{Q}(q_{*},((t_{0}+_{Q} t_{1})-_{Q} q_{*}) &=& \beta_{Q}(q_{*}, (t_{1}-_{Q} q_{*}) + t_{0}) \\
        &=& \beta_{Q} (q_{*}, (t_{1}-_{Q}q_{*})) + \beta_{Q}(q_{*},t_{0}) + \beta_{V}((t_{1}-_{Q} q_{*}), t_{0}),
    \end{eqnarray*}
    a sum of acceptable terms. An identical computation shows  
    $$
    \beta_{Q}(q_{*}, (t_{0} +_{Q} t_{1}) -_{Q} t') = \beta_{Q}(q_{*}, (t_{1}-_{Q} t')) + \beta_{Q}(q_{*},t') + \beta_{V} ((t_{1}-_{Q}t'),t')
    $$
    which is a sum of acceptable terms, so $\beta_{Q}(q_{*},t-_{Q}t')$ is acceptable.

    Next, we observe 
    \begin{eqnarray*}
        \beta_{V}((t_{0}+_{Q} t_{1}) -_{Q} q_{*}, t') &=& \beta_{V}((t_{1}-_{Q} q_{*})+t_{0}, t') \\
        &=& \beta_{V}(t_{1}-_{Q} q_{*}, t') + \beta_{V}(t_{0},t'),
    \end{eqnarray*}
    which is a sum of acceptable terms. The case of $\beta_{V}(t-_{Q} q_{*}, t'-_{Q} q_{*})$ is identical.  This proves the lemma. 
\end{proof}

\begin{lem} \label{lem:acceptable}
    Every term in $L_{Q,q_{*}}$ is equal modulo $T_{Q,q_{*}}$ to an acceptable one. 
\end{lem}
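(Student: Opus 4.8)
The plan is to prove Lemma~\ref{lem:acceptable} by induction on the structure of an $L_{Q,q_{*}}$-term $t$, exploiting that the only function symbols of $L_{Q,q_{*}}$ not belonging to $L_{-,q_{*}}$ are $\beta_{V}$ and $\beta_{Q}$. Before running the induction I would isolate the following closure property, which is really the engine of the argument: if $r$ is an $L_{-,q_{*}}$-term and $t_{1},\dots,t_{m}$ are acceptable terms of the sorts matching the variables of $r$, then $r(t_{1},\dots,t_{m})$ is acceptable. To prove this, write each $t_{i}$ as a term equal modulo $T_{Q,q_{*}}$ to an $L_{-,q_{*}}$-term $s_{i}$ with a designated tuple $\bar y_{i}$ of its field-sort variables replaced by terms $\bar b_{i}$ from $\mathcal{B}$; after renaming, one may assume the tuples $\bar y_{i}$ are pairwise disjoint and disjoint from all other variables in sight; then $r(s_{1},\dots,s_{m})$ is again an $L_{-,q_{*}}$-term, and substituting $\bar b_{i}$ for $\bar y_{i}$ into it produces a term equal modulo $T_{Q,q_{*}}$ to $r(t_{1},\dots,t_{m})$. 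In particular the acceptable $K$-valued terms are closed under $+$, $\cdot$, and squaring.

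For the induction itself: if $t$ is a variable or the constant $q_{*}$, it is already an $L_{-,q_{*}}$-term, hence trivially acceptable. If $t=f(t_{1},\dots,t_{k})$ with $f$ a function symbol of $L_{-,q_{*}}$ (for instance $+_{Q}$ or $-_{Q}$), then each $t_{i}$ is acceptable by the induction hypothesis and $t$ is acceptable by the closure property. If $t=\beta_{V}(t_{1},t_{2})$ with $t_{1},t_{2}$ valued in $V$, then since $\beta_{V}(x,y)\in\mathcal{B}$, Lemma~\ref{lem:acceptablelem} applies directly and gives that $t$ is acceptable. The one case with genuine content is $t=\beta_{Q}(t_{1},t_{2})$ with $t_{1}$ valued in $Q$ and $t_{2}$ valued in $V$: here I would invoke the regular-action axiom of $T^{0}_{Q}$. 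Setting $u = t_{1} -_{Q} q_{*}$ we have $t_{1} = q_{*} +_{Q} u$, so the axiom $\beta_{Q}(q+_{Q} v,w)=\beta_{Q}(q,w)+\beta_{V}(v,w)^{2}$ yields, modulo $T_{Q,q_{*}}$,
$$
\beta_{Q}(t_{1},t_{2}) \;=\; \beta_{Q}(q_{*},t_{2}) \;+\; \beta_{V}\!\left(t_{1} -_{Q} q_{*},\ t_{2}\right)^{2}.
$$
On the right-hand side, $\beta_{Q}(q_{*},t_{2})$ is acceptable because $\beta_{Q}(q_{*},x)\in\mathcal{B}$ (Lemma~\ref{lem:acceptablelem}, unary case), and $\beta_{V}(t_{1} -_{Q} q_{*},\,t_{2})$ is acceptable because $\beta_{V}(x,y)\in\mathcal{B}$ and both of its arguments are $V$-valued $L_{Q,q_{*}}$-terms (Lemma~\ref{lem:acceptablelem} again); hence its square is acceptable, and so is the sum of the two acceptable $K$-valued terms, by the closure property. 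This exhausts the cases and completes the induction.

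The main obstacle is bookkeeping rather than conceptual difficulty: in the proof of the closure property one must be careful about variable clashes between the placeholder variables $\bar y_{i}$ coming from different acceptable subterms (and between those placeholders and the ``real'' variables occurring inside the $\mathcal{B}$-terms), so that the simultaneous substitution genuinely reconstructs $r(t_{1},\dots,t_{m})$. The only other point requiring thought is spotting the right instance of the action axiom that peels $\beta_{Q}(t_{1},-)$ down to $\beta_{Q}(q_{*},-)$ plus a $\beta_{V}$-correction term; once that identity is in hand, Lemma~\ref{lem:acceptablelem} does all the remaining work.
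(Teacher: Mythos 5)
Your proof is correct and follows essentially the same route as the paper: reduce to constants and variables, closure of acceptable terms under the $L_{-,q_{*}}$ term-forming operations (which you spell out via a substitution/renaming argument where the paper calls it obvious), and the rewriting $\beta_{Q}(t_{1},t_{2})=\beta_{Q}(q_{*},t_{2})+\beta_{V}(t_{1}-_{Q}q_{*},t_{2})^{2}$ together with Lemma~\ref{lem:acceptablelem} for the two non-$L_{-,q_{*}}$ symbols. Your phrasing is in fact slightly cleaner than the paper's in that you correctly note the $\beta_{V}$ and $\beta_{Q}$ cases are handled by Lemma~\ref{lem:acceptablelem} without needing the induction hypothesis on $t_{1},t_{2}$.
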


\begin{proof}
    Clearly constants and variables are acceptable. Moreover, the acceptable terms are obviously closed under the term-forming operations of $L_{-,q_{*}}$.  Hence it suffices to show that if $t,t'$ are acceptable, then $\beta_{Q}(t,t')$ is acceptable (in the case that $t$ is $Q$-valued and $t'$ is $V$-valued) and $\beta_{V}(t,t')$ is acceptable (when $t$ and $t'$ are $V$-valued).  

    For the first case, we note that 
  $$
  \beta_{Q}(t,t') = \beta_{Q}(q_{*},t') + (\beta_{V}(t-_{Q} q_{*},t'))^{2}
  $$
  which is acceptable, since both $\beta_{Q}(q_{*},t)$ and $\beta_{V}(t-_{Q}q_{*},t')$ are acceptable by Lemma \ref{lem:acceptablelem}.  The second case is even easier, since this follows directly by \ref{lem:acceptablelem}. 
\end{proof}

\begin{thm} \label{thm:nfop2proof}
    The theory $T_{Q}$ is NFOP$_{2}$. 
\end{thm}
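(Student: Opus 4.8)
The plan is to reduce, via quantifier elimination and the term analysis already carried out in Lemmas \ref{lem:acceptablelem} and \ref{lem:acceptable}, to a single application of the Composition Lemma (Fact \ref{fact:compositionlemma}) over the stable reduct $T_{-}$. As indicated before the statement of Lemma \ref{lem:acceptablelem}, it suffices to show that $T_{Q,q_{*}}$ is NFOP$_{2}$: adjoining a constant to the $Q$-sort cannot create an NFOP$_{2}$ failure where there was none, and conversely, if $\varphi(x,y,z)$ has FOP$_{2}$ modulo $T_{Q,q_{*}}$, then replacing the occurrences of $q_{*}$ by a fresh variable absorbed into the block $x$ produces an $L_{Q}$-formula with FOP$_{2}$ modulo $T_{Q}$.

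The first step is to record that $T_{Q,q_{*}}$ eliminates quantifiers. By Proposition \ref{prop: quadratic bnf}, the isomorphisms between finitely generated substructures of (suitably saturated) models of $T_{Q}$ form a back-and-forth system; since the field sorts satisfy $\mathrm{ACF}_{2}$, this gives quantifier elimination for $T_{Q}$, and hence for its expansion $T_{Q,q_{*}}$ by a constant. So any $L_{Q,q_{*}}$-formula $\varphi(x,y,z)$, with $x,y,z$ tuples of variables in the three sorts, is equivalent modulo $T_{Q,q_{*}}$ to a Boolean combination of atomic $L_{Q,q_{*}}$-formulas.

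Next I would rewrite the terms occurring in these atomic formulas. Every atomic $L_{Q,q_{*}}$-formula is an equation between two terms of a common sort, or of the form $\theta_{n}(t_{1},\dots,t_{n})$ with $V$-valued $t_{i}$; in all cases the relation symbol already lies in $L_{-,q_{*}}$. By Lemma \ref{lem:acceptable}, each $t_{i}$ equals, modulo $T_{Q,q_{*}}$, an acceptable term, i.e. an $L_{-,q_{*}}$-term into which finitely many terms from $\mathcal{B}$ have been substituted for some $K$-sort variables; and once the arguments of such a $\mathcal{B}$-term are instantiated by variables drawn from $x,y,z$, it becomes a ($K$-valued, at most binary) definable function of at most two of the blocks $x,y,z$. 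Collecting all the $\mathcal{B}$-terms $b_{1}(u_{1}),\dots,b_{k}(u_{k})$ that arise, together with the identity functions on the three blocks (to absorb any remaining direct occurrences of $x,y,z$), and abstracting all of these as the free variables of a single $L_{-,q_{*}}$-formula $\theta$, we obtain
$$
\varphi(x,y,z) \;=\; \theta\bigl(\mathrm{id}(x),\mathrm{id}(y),\mathrm{id}(z),\,b_{1}(u_{1}),\dots,b_{k}(u_{k})\bigr)
$$
modulo $T_{Q,q_{*}}$, with each argument an $L_{Q,q_{*}}$-definable function of at most two of the blocks $x,y,z$, as required by Fact \ref{fact:compositionlemma}.

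Finally, $T_{Q}$ — hence $T_{Q,q_{*}}$ — implies every axiom of $T_{-}$, so $T_{Q,q_{*}} \upharpoonright L_{-,q_{*}}$ is $T_{-}$ together with the constant $q_{*}$, which is stable because $T_{-}$ is stable and adjoining a constant preserves stability. Fact \ref{fact:compositionlemma}, applied with $L_{0} = L_{-,q_{*}}$, then yields that $\varphi$ is NFOP$_{2}$ modulo $T_{Q,q_{*}}$; since $\varphi$ was arbitrary, $T_{Q,q_{*}}$, and therefore $T_{Q}$, is NFOP$_{2}$. The essential work is already done in Lemmas \ref{lem:acceptablelem} and \ref{lem:acceptable}: the only symbols outside $L_{-}$ are $\beta_{Q}$ and $\beta_{V}$, and those lemmas show that every term built from them passes through the at most binary terms of $\mathcal{B}$, so that no genuinely ternary $K$-valued definable function can arise — this is exactly what makes the Composition Lemma over the stable reduct $T_{-}$ applicable. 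The remaining obstacle is only bookkeeping: checking that the quantifier-free reduction really lands among atomic $L_{-,q_{*}}$-formulas with $\mathcal{B}$-terms plugged in, and that a Boolean combination of these is again a single $\theta$-composition of definable functions of pairs.
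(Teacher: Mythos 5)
Your proof takes the same route as the paper's: reduce to $T_{Q,q_{*}}$, eliminate quantifiers, rewrite all terms as acceptable ones via Lemma \ref{lem:acceptable}, and apply the Composition Lemma over the stable reduct $T_{-,q_{*}}$. One small slip: both of your justifications for ``it suffices to show $T_{Q,q_{*}}$ is NFOP$_{2}$'' actually establish the implication $T_{Q,q_{*}}$ has FOP$_{2}\Rightarrow T_{Q}$ has FOP$_{2}$ (i.e.\ $T_{Q}$ NFOP$_{2}\Rightarrow T_{Q,q_{*}}$ NFOP$_{2}$), whereas the reduction needs the reverse implication $T_{Q}$ has FOP$_{2}\Rightarrow T_{Q,q_{*}}$ has FOP$_{2}$ --- which is what the paper means by ``FOP$_{2}$ is preserved under adding constants,'' and which holds trivially because every $L_{Q}$-formula is an $L_{Q,q_{*}}$-formula and any witnessing array in a model of $T_{Q}$ survives when one names an element of $Q$ as $q_{*}$.
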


\begin{proof}
    Since having FOP$_{2}$ is clearly preserved under adding constants, it suffices to show that $T_{Q,q_{*}}$ is NFOP$_{2}$. By elimination of quantifiers, we know that each formula $\varphi(x,y,z)$ of $L_{Q,q_{*}}$ may be expressed, modulo $T_{Q,q_{*}}$ as $\psi(t_{1}(x,y,z), \ldots, t_{k}(x,y,z))$ for some quantifier-free $L_{-,q_{*}}$ formula $\psi(w_{1}, \ldots, w_{k})$ and $L_{Q,q_{*}}$-terms $t_{1}, \ldots, t_{k}$. Moreover, by Lemma \ref{lem:acceptable} and possibly padding with dummy variables, each term $t_{i}(x,y,z)$ may be expressed as $s_{i}(b_{i,1}(u_{i,1}), \ldots, b_{i,l_{i}}(u_{i,l_{i}}))$, where $s_{i}$ is an $L_{-,q_{*}}$ term, each $b_{i,j}$ is in $\mathcal{B}$, and $u_{i,j}$ is an ordered pair of tuples of variables from $x$, $y$, and $z$. As $T_{-,q_{*}}$ is stable, Fact \ref{fact:compositionlemma} immediately implies then that $\varphi(x,y,z)$ is NFOP$_{2}$. As $\varphi(x,y,z)$ was arbitrary, this completes the proof. 
\end{proof}

\begin{cor}
    The theory $T_{O}$ is NFOP$_{2}$. 
\end{cor}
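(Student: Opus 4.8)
The plan is to realize each model of $T_{O}$ as a parameter-free definable reduct of a model of $T_{Q,q_{*}}$, and then to invoke the fact, established in the course of proving Theorem \ref{thm:nfop2proof}, that $T_{Q,q_{*}}$ is NFOP$_{2}$. This is the precise sense in which orthogonal spaces are interpretable in quadratic geometries: once a form in the $Q$-sort is named, the entire $L_{O}$-structure becomes quantifier-free definable, and FOP$_{2}$ of a formula descends along such definable reducts.

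First I would fix a model $M = (V,K,q) \models T_{O}$ and expand it to an $L_{Q}$-structure $M^{+} = (V,K,Q)$ by taking $Q$ to be the orbit $\{v +_{Q} q : v \in V\}$ of $q$ under the regular translation action of $V$, with $\beta_{V} = \beta$ and with the form $v +_{Q} q$ evaluated as $q + \lambda_{v}^{2}$, where $\lambda_{v} = \beta(v,-)$. Since in characteristic $2$ the symmetric bilinear form attached to a quadratic form is automatically alternating, $M^{+}$ is a model of $T^{0}_{Q}$, and as $K \models \mathrm{ACF}_{2}$, $V$ is infinite dimensional and $Q \neq \emptyset$, naming the constant $q_{*} := q$ yields $(M^{+},q_{*}) \models T_{Q,q_{*}}$ (completeness of $T_{Q,q_{*}}$ being a consequence of the back-and-forth of Proposition \ref{prop: quadratic bnf}).

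The structural heart of the argument is the observation that $M$, as an $L_{O}$-structure, is quantifier-free definable in $(M^{+},q_{*})$ without parameters: the ring operations on $K$, the abelian-group operations on $V$, scalar multiplication, and the linear-independence predicates and coordinate functions all belong to the common sublanguage $L_{V,K}$, while the quadratic form is recovered as $q(v) = \beta_{Q}(q_{*},v)$. Hence every $L_{O}$-formula $\varphi(x_{1},x_{2},x_{3})$ is, modulo $T_{O}$, equivalent to an $L_{Q,q_{*}}$-formula $\varphi'(x_{1},x_{2},x_{3})$ defining the same relation on $M$.

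To finish, suppose some $L_{O}$-formula $\varphi$ had FOP$_{2}$ in $M$, witnessed by tuples $(a_{\eta})_{\eta \in \omega^{\omega}}$ and $(b_{i},c_{i})_{i < \omega}$. The same tuples then witness FOP$_{2}$ of $\varphi'$ in $(M^{+},q_{*}) \models T_{Q,q_{*}}$, contradicting Theorem \ref{thm:nfop2proof}. So $T_{O}$ is NFOP$_{2}$. There is no genuine obstacle here, the corollary being a formal consequence of the quadratic case; the only bookkeeping point worth checking is that adjoining a constant from the $Q$-sort to the complete theory $T_{Q}$ gives exactly the theory $T_{Q,q_{*}}$ used in Theorem \ref{thm:nfop2proof}, which follows at once from quantifier elimination down to the field together with the homogeneity of the $Q$-sort.
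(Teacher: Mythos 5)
Your proposal is essentially the paper's own argument: both rest on the observation that $T_{O}$ is definable in $T_{Q,q_{*}}$ (via $q(v) = \beta_{Q}(q_{*},v)$) and that FOP$_{2}$ descends along such a definable reduct, together with Theorem \ref{thm:nfop2proof}. You simply spell out the expansion in the other direction (from a $T_{O}$-model to a $T_{Q,q_{*}}$-model) and verify more of the bookkeeping, but the underlying idea and the appeal to the quadratic-geometry case are identical.
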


\begin{proof}
    The theory $T_{O}$ is interpretable (even definable) in $T_{Q,q_{*}}$: 
 if $M \models T_{Q, q_{*}}$, then a model of $T_{O}$ is definable in $M$ by interpreting $q$ as the function $\beta_{Q}(q_{*},\cdot)$ on $V(M)$. Hence $T_{O}$ is NFOP$_{2}$ since $T_{Q,q_{*}}$ is, by Theorem \ref{thm:nfop2proof}. 
\end{proof}

\begin{rem}
    Using unpublished results of Chernikov and Hempel \cite[Theorem 8.11]{ChernikovHempel3}, the same proof gives the even stronger conclusion that $T_{Q}$ and $T_{O}$ are NOP$_{2}$ in the sense of Takeuchi.   
\end{rem}

\subsection{NSOP$_{1}$}

For this subsection, we will let $T$ be a theory of a quadratic geometry, in which the associated field is algebraically closed (the generic case without the Witt defect in the language), perfect PAC with Galois group $\mathbb{Z}_{2}$ (the generic case in the presence of the Witt defect), or pseudo-finite (either with or without the Witt defect).  Let $T^{0}$ denote the theory of quadratic geometries that are substructures of a model of $T$. We will explicitly describe proofs in these four cases, though our arguments certainly would go through with basically no change in the case that the underlying field was only assumed to be NSOP$_{1}$, defining independence in the field in terms of Kim-independence. 

We will let $\mathbb{M} \models T$ be a monster model. We will show that $T$ is NSOP$_{1}$ through an analysis of independence in $T$.

\begin{defn}
    Suppose $M \models T$. 
         We say that a formula $\varphi(x;a)$ \emph{Kim-divides over} $M$ if there is a Morley sequence $(a_{i})_{i < \omega}$ over $M$ in a global $M$-invariant type with $a_{0} = a$ and $\{\varphi(x;a_{i}) : i < \omega\}$ is inconsistent. We write $a \ind^{Kd}_{M} b$ to indicate that $\mathrm{tp}(a/Mb)$ contains no formula that Kim-divides over $M$.
\end{defn}

We will use the following characterization of NSOP$_{1}$ theories in terms of Kim-independence in order to prove that $T$ is NSOP$_{1}$. Although NSOP$_{1}$ is defined by a combinatorial configuration of instances of a formula, the reader can take the following fact as a definition of NSOP$_{1}$:

\begin{fact} \label{fact: symmetry}
    A theory $T$ is NSOP$_{1}$ if and only if $\ind^{Kd}$ is symmetric over models, i.e. for any $M \models T$, $a \ind^{Kd}_{M} b$ if and only if $b \ind^{Kd}_{M} a$. \cite[Theorem 5.16]{KR20}
\end{fact}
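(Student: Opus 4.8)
This is the symmetry theorem for Kim-independence in NSOP$_1$ theories due to Kaplan and Ramsey; the plan is to reconstruct its proof, treating both directions. Fix $M \models T$ and recall that $a \ind^{Kd}_M b$ means no formula in $\mathrm{tp}(a/Mb)$ Kim-divides over $M$, Kim-dividing being witnessed by a Morley sequence over $M$ in a global $M$-invariant type.

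For the direction \emph{symmetry $\Rightarrow$ NSOP$_1$} I would argue contrapositively. Assume $T$ has SOP$_1$, witnessed by a formula $\varphi(x;y)$ and a tree $(a_\eta)_{\eta \in 2^{<\omega}}$ with every branch $\{\varphi(x;a_{\eta|n}) : n<\omega\}$ consistent but $\{\varphi(x;a_\nu),\varphi(x;a_{\eta \frown 1})\}$ inconsistent whenever $\eta\frown 0 \trianglelefteq \nu$. After passing (by a Ramsey-style extraction) to a sufficiently indiscernible tree and naming a small model $M$ over which it is based, the ``$1$-successors'' along any branch form an $M$-invariant Morley sequence on which the corresponding instance of $\varphi$ is inconsistent, while a realization $b$ of that instance is Kim-independent over $M$ from the ``$0$-spine'' $a$, since the spine extends to an $M$-invariant sequence compatible with $\mathrm{tp}(b/M)$. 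This exhibits $a,b$ with $a \ind^{Kd}_M b$ but $b \nind^{Kd}_M a$, so $\ind^{Kd}_M$ is not symmetric.

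The direction \emph{NSOP$_1$ $\Rightarrow$ symmetry} is where the real work lies, and I would assemble it from three ingredients, each genuinely using the NSOP$_1$ hypothesis. First, \textbf{Kim's Lemma for Kim-dividing}: $\varphi(x;a)$ Kim-divides over $M$ iff $\{\varphi(x;a_i):i<\omega\}$ is inconsistent for \emph{every} Morley sequence $(a_i)$ over $M$ in a global $M$-invariant type extending $\mathrm{tp}(a/M)$ — if one such sequence were consistent and another inconsistent, interleaving them into a tree would display an SOP$_1$-pattern for $\varphi$; compactness then upgrades this to a uniform $k$-inconsistency bound (the chain condition). Second, using Kim's Lemma, I would show Kim-forking coincides with Kim-dividing and deduce left extension: given $a \ind^{Kd}_M b$ and any $c$, there is $a' \equiv_{Mb} a$ with $a' \ind^{Kd}_M bc$. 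Third, the \textbf{Independence Theorem}: if $a_0 \equiv_M a_1$, $a_0 \ind^{Kd}_M b$, $a_1 \ind^{Kd}_M c$, and $b \ind^{Kd}_M c$, then some $a_*$ realizes $\mathrm{tp}(a_0/Mb) \cup \mathrm{tp}(a_1/Mc)$ with $a_* \ind^{Kd}_M bc$. Granting these, symmetry follows by the classical ``build a Morley sequence'' argument: if $a \ind^{Kd}_M b$ while some $\varphi(y;a) \in \mathrm{tp}(b/Ma)$ Kim-divides over $M$, I would use extension and the Independence Theorem inductively to build an $M$-invariant Morley sequence $(a_i)_{i<\omega}$ with $a_0=a$ and $a_i \equiv_{Mb} a$ for all $i$ (maintaining $a_{<n}\ind^{Kd}_M b$ at each stage); then $b \models \varphi(y;a_i)$ for every $i$, so $\{\varphi(y;a_i):i<\omega\}$ is consistent, contradicting the $k$-inconsistency from Kim's Lemma.

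The main obstacle is \textbf{Kim's Lemma for Kim-dividing}: this is the one point at which NSOP$_1$ is essential in the hard direction, and the tree combinatorics — including the preliminary passage to a sufficiently indiscernible tree — needed to extract an SOP$_1$-configuration from two incompatible invariant Morley sequences is the delicate part. The secondary difficulty is the Independence Theorem, whose proof must track the sides of $\ind^{Kd}$ carefully, since symmetry is precisely what is being bootstrapped and so cannot be assumed in its proof.
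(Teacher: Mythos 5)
The paper does not prove this statement; it cites it to Kaplan--Ramsey [KR20, Theorem 5.16], so the comparison must be with the proof there. Your outline of the direction symmetry $\Rightarrow$ NSOP$_1$ (take an SOP$_1$ tree, pass to a sufficiently indiscernible one over a small model, and read off a failure of symmetry between the spine and a realization along a branch) is in the right spirit and matches the standard argument.

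The hard direction, however, contains a circularity that you yourself flag but do not resolve. In [KR20] the logical order is: Kim's Lemma $\Rightarrow$ chain condition $\Rightarrow$ symmetry (their Theorem 5.16) $\Rightarrow$ Independence Theorem (their Theorem 6.5). The proof of the Independence Theorem there makes essential use of symmetry, so it cannot serve as an ingredient in proving symmetry. Your plan runs the arrow backwards: you propose to derive symmetry from the Independence Theorem via a Kim--Pillay style ``build a Morley sequence'' argument, but you give no independent route to the Independence Theorem, and the standard one is unavailable at that stage. Your own closing remark that ``symmetry is precisely what is being bootstrapped and so cannot be assumed in its proof'' identifies exactly the gap, but the sketch leaves it unaddressed, so the argument does not close.

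What Kaplan--Ramsey actually do is obtain symmetry directly from Kim's Lemma together with a chain condition, by a tree-combinatorial construction that never mentions amalgamation. Roughly: assuming $a \ind^{Kd}_M b$ while some $\varphi(y;a) \in \mathrm{tp}(b/Ma)$ Kim-divides over $M$, one uses the chain condition repeatedly to build a tree of $Mb$-conjugates of $a$ that is suitably indiscernible over a branch; consistency of $\varphi$ along each branch (witnessed by $b$) and Kim-dividing across siblings then yields an SOP$_1$-configuration for $\varphi$, a contradiction. So your first ingredient (Kim's Lemma and its compactness upgrade to a uniform inconsistency bound) is correct and essential, and extension on the right-hand side is also available at this stage; but the third ingredient should be the chain condition plus the tree construction, not the Independence Theorem. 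As written, the proposal has a genuine gap.
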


It is known that ACF$_{2}$, PSF$_{2}$, and, more generally, the theory of any bounded perfect PAC field is simple, hence NSOP$_{1}$ \cite{hrushovski1991pseudo}.  In particular, the theory of a perfect PAC field with Galois group is $\mathbb{Z}_{2}$ is NSOP$_{1}$. 

\begin{defn}
    Suppose $M \models T$, and $A$ and $B$ are sets of parameters.  We define $A \ind^{*}_{M} B$ if and only if $K(\mathrm{acl}(AM)) \ind^{Kd}_{K(M)} K(\mathrm{acl}(BM))$ and $\langle V(\mathrm{acl}(AM)) \rangle \cap \langle V(\mathrm{acl}(BM)) \rangle = \langle V(M) \rangle$.  
\end{defn}

\begin{rem}
   Note that, in an NSOP$_{1}$ theory, a formula Kim-forks over a model if and only if it Kim-divides over that model \cite[Proposition 3.19]{KR20}, so, with respect to the NSOP$_{1}$ theory of the field, we will not need to distinguish between $\ind^{Kd}$ and $\ind^{K}$. Moreover, it may be surprising at first to see that we do not impose some kind of independence in the $Q$ sort.  But, since we are working over a model, our base contains an element of $Q$ and therefore there is a definable bijection from $V$ to $Q$. Thus, we get an independence condition on $Q$ simply by imposing one on $V$. 
\end{rem} 

In \cite[Lemma 4.7]{kruckman2023new}, the two-sorted theory of vector spaces with a symmetric positive definite bilinear form over a real-closed field was considered and the following two facts were proved.  The proofs go through in our context without change:

\begin{fact} \label{fact:Kimvecfacts}
    Suppose $M \models T$, $A = \mathrm{acl}(AM)$, and $B = \mathrm{acl}(BM)$.  
    \begin{enumerate}
        \item If $A \ind^{Kd}_{M} B$, then $K(A) \ind^{Kd}_{K(M)} K(B)$ in $\mathrm{Th}(K(\mathbb{M}))$. \cite[Lemma 4.7]{kruckman2023new}
        \item If $A \ind^{Kd}_{M} B$, then $V(A)$ and $V(B)$ are linearly independent over $V(M)$. \cite[Lemma 4.8(3)]{kruckman2023new}
    \end{enumerate}
\end{fact}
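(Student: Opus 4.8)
The plan is to prove both clauses by the contrapositive, exhibiting in each case an explicit formula that Kim-divides over $M$ in $\mathbb{M}$ and hence witnesses $A \nind^{Kd}_{M} B$; this mirrors the argument of \cite[Lemmas 4.7, 4.8]{kruckman2023new}, and the only inputs special to our setting are (a) that the field sort of $\mathbb{M}$ is stably embedded with exactly its pure field structure, which is Corollary \ref{cor:quadraticstableembeddedness} and holds uniformly whether $K$ is algebraically closed, pseudo-finite, or perfect PAC with Galois group $\mathbb{Z}_{2}$, and (b) that in the reduct of $\mathbb{M}$ to the vector-space/field part the natural module-type independence is stable, so that coheir sequences over $M$ are linearly independent. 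Throughout I use that Kim-dividing over a model is witnessed by a Morley sequence in a global invariant type.

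For (1): suppose $K(A) \nind^{Kd}_{K(M)} K(B)$ in $\mathrm{Th}(K(\mathbb{M}))$, so some $L_{\mathrm{ring}}$-formula $\varphi(x;b)$ with $b$ from $K(B)$ lies in $\mathrm{tp}_{\mathrm{ring}}(K(A)/K(M)K(B))$ and Kim-divides over $K(M)$, witnessed by a Morley sequence $(b_{i})_{i<\omega}$ with $b_{0}=b$ in a global $K(M)$-invariant type $p$. By stable embeddedness, a complete type of field elements over $\mathbb{M}$ is determined by its restriction to $K(\mathbb{M})$, so $p$ corresponds to a global type $\tilde p$ of $\mathbb{M}$ in field-sort variables; since the correspondence is canonical it is $M$-invariant. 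A Morley sequence of $\tilde p$ over $M$ with first term $b$ restricts on the field sort to a Morley sequence of $p$ over $K(M)$ with first term $b$, and all such have the same type over $K(M)$, so $\{\varphi(x;b_{i}) : i<\omega\}$ remains inconsistent. Thus $\varphi(x;b)$, read in $\mathbb{M}$, Kim-divides over $M$; since it holds of the field part of $A$ with parameter $b$ from $B$, we conclude $A \nind^{Kd}_{M} B$.

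For (2): suppose $V(A)$ and $V(B)$ are not linearly independent over $V(M)$. Since $V(M) \subseteq V(A) \cap V(B)$, there is a vector $v \in \langle V(A)\rangle \cap \langle V(B)\rangle$ with $v \notin \langle V(M)\rangle$; taking a shortest expression $v = \sum_{i=1}^{n} \alpha_{i} w_{i}$ with $w_{i} \in V(B)$ forces all $\alpha_{i} \neq 0$ and, by a short computation subtracting off $V(M)$-components, forces $w_{1},\dots,w_{n}$ to be linearly independent over $\langle V(M)\rangle$. Let $\varphi(x;\bar w)$ be the $L_{Q}$-formula built from $\theta_{n}$ and the coordinate functions $\pi_{n,i}$ that says $x$ lies in $\langle w_{1},\dots,w_{n}\rangle$ with every coordinate nonzero; it is over $B$ and holds of $v \in A$. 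Let $(\bar w^{j})_{j<\omega}$ be a Morley sequence over $M$, with $\bar w^{0} = \bar w$, in a global type finitely satisfiable in $M$ extending $\mathrm{tp}(\bar w/M)$. Since linear independence of $\bar w$ over $V(M)$ is a partial type over $M$, finite satisfiability forces $\langle \bar w^{0}\rangle \cap \langle \bar w^{1}\rangle \subseteq \langle V(M)\rangle$, hence $\langle \bar w^{0}\rangle \cap \langle \bar w^{1}\rangle = 0$; so $\varphi(x;\bar w^{0}) \wedge \varphi(x;\bar w^{1})$ is unsatisfiable, $\varphi(x;\bar w)$ Kim-divides over $M$, and $A \nind^{Kd}_{M} B$.

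I expect the main obstacle to be the genericity step in (2), namely that for a sequence finitely satisfiable in $M$ one has $\langle \bar w^{0}\rangle \cap \langle \bar w^{1}\rangle \subseteq \langle V(M)\rangle$ even though the field sort moves along the sequence. The clean way to see this is to pass to the reduct of $\mathbb{M}$ forgetting $Q$, $\beta_{V}$, and $\beta_{Q}$ (so only $V$, $K$, scalar multiplication, the linear independence predicates $\theta_{n}$, and the coordinate functions $\pi_{n,i}$ remain): there $A \ind B$ over $M$ is the stable, module-type independence, coheir sequences over $M$ are genuinely linearly independent, and a global type finitely satisfiable in $M$ in $\mathbb{M}$ restricts to one in the reduct. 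A secondary point, fully covered by Corollary \ref{cor:quadraticstableembeddedness}, is that the stable-embeddedness transfer used in (1) must be applied with (an elementary extension of) $\mathbb{M}$ as the base, which is the generality in which that corollary is stated.
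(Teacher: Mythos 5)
The paper does not itself prove this Fact; it is quoted from Kruckman--Ramsey with the remark that the proofs go through unchanged, so what you supply is genuinely new text. Your argument for clause~(1) is correct and is the expected one: lift Kim-dividing from $\mathrm{Th}(K(\mathbb{M}))$ to $T$ using the stable embeddedness of the field sort (Corollary~\ref{cor:quadraticstableembeddedness}), noting that the induced global type $\tilde p$ is $M$-invariant because any $M$-automorphism of $\mathbb{M}$ restricts to a $K(M)$-automorphism of $K(\mathbb{M})$ and $\tilde p$ is determined by its field-restriction. The only caveat, which you flag, is that the corollary must be applied with $\mathbb{M}$ playing the role of $C$ inside a larger elementary extension.

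Clause~(2) has a genuine gap, and it is not the genericity point you anticipated but an earlier one. The vector $v$ witnessing $\langle V(A)\rangle \cap \langle V(B)\rangle \supsetneq \langle V(M)\rangle$ lives in the $K(\mathbb{M})$-span of $V(A)$, and since the field sort grows across $A \subseteq \mathbb{M}$, this span strictly contains $V(A)$: writing $v = \sum_j \delta_j a_j$ with $a_j \in V(A)$, the scalars $\delta_j$ will in general lie in $K(\mathbb{M}) \setminus K(A)$, so $v \notin V(A)$ and hence $v \notin A = \mathrm{acl}(AM)$. Your assertion ``holds of $v \in A$'' is therefore unjustified. This matters: to conclude $A \nind^{Kd}_{M} B$ you need a Kim-dividing formula in $\mathrm{tp}(\bar a/M\bar b)$ for tuples $\bar a$ from $A$ and $\bar b$ from $B$, and exhibiting one in $\mathrm{tp}(v/M\bar w)$ for some external $v$ does not suffice (Kim-independence has base monotonicity downward, not upward, so $v \nind^{Kd}_M B$ does not yield $\bar a \nind^{Kd}_M B$). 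The natural repair --- replacing $\varphi$ by a formula $\psi(\bar y; \bar w,\bar m)$ asserting $\langle \bar y\rangle \cap \langle \bar w\bar m\rangle \not\subseteq \langle \bar m\rangle$, with $\bar y$ now ranging over a tuple $\bar a$ from $V(A)$ with $v \in \langle\bar a\rangle$ --- does not obviously Kim-divide: a fixed $\bar y$ spans a finite-dimensional space, and as $\bar w$ ranges over a Morley sequence the intersecting line can wander freely inside $\langle \bar y\rangle$, so the conjunction need not be inconsistent. Some additional idea is needed to pin down the intersection, and the proposal does not supply it. Your reduct argument addresses only the separate question of why finitely satisfiable sequences are linearly disjoint over $V(M)$, which is correct but does not touch the $v \notin A$ issue.
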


\begin{prop} \label{prop: indepchar}
    Suppose $M \models T$, $A = \mathrm{acl}(AM)$, and $B = \mathrm{acl}(BM)$.  Then $A \ind^{Kd}_{M} B$ if and only if $A \ind^{*}_{M} B$.  
\end{prop}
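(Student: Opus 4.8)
The plan is to prove this by showing that $\ind^{*}$ satisfies the properties that characterize Kim-independence in NSOP$_1$ theories via the Kim-Pillay-style criterion (invariance, monotonicity, symmetry, the independence theorem, etc.), or — perhaps more efficiently in this setting — to prove the two implications directly, using the quantifier elimination established in Proposition \ref{prop: quadratic bnf} and the known structure of Kim-independence in the field.

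For the forward direction, assume $A \ind^{Kd}_{M} B$ with $A = \mathrm{acl}(AM)$, $B = \mathrm{acl}(BM)$. Fact \ref{fact:Kimvecfacts}(1) immediately gives $K(A) \ind^{Kd}_{K(M)} K(B)$, and since $K(A) = K(\mathrm{acl}(AM))$ and similarly for $B$, and the field is stably embedded (Corollary \ref{cor:quadraticstableembeddedness}), this yields the first clause of $\ind^{*}$. Fact \ref{fact:Kimvecfacts}(2) gives that $V(A)$ and $V(B)$ are linearly independent over $V(M)$; since $\langle V(M) \rangle = V(M)$ as $M$ is a model (so $V(M)$ is already a $K(M)$-subspace) and linear independence of the spans over the common span $V(M)$ is exactly the condition $\langle V(\mathrm{acl}(AM)) \rangle \cap \langle V(\mathrm{acl}(BM)) \rangle = \langle V(M) \rangle$, we obtain the second clause. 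So the forward direction is essentially a repackaging of Fact \ref{fact:Kimvecfacts}.

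For the reverse direction — this is the substantive part — assume $A \ind^{*}_{M} B$. I would show $A \ind^{Kd}_{M} B$ by the standard method: take a global $M$-invariant (indeed, a coheir or finitely satisfiable) type extending $\mathrm{tp}(B/M)$, build a Morley sequence $(B_i)_{i < \omega}$ over $M$ with $B_0 = B$, and show (using $\ind^{*}$) that $\mathrm{tp}(A/MB)$ is consistent with $\bigcup_i \mathrm{tp}(A B_i / M)$-type data, i.e., that $A$ together with the $B_i$ can be amalgamated. The key is: using quantifier elimination (Proposition \ref{prop: quadratic bnf}), the type of $A$ over $MB$ is determined by (i) the field-part type of $K(A)$ over $K(M)K(B)$, (ii) the isomorphism type of the finitely-generated substructure $\langle A, B \rangle$ as an $L_Q$ (or $L_{Q,\omega}$)-structure over $\langle M \rangle$, which — by extension of scalars (Lemma \ref{lem:quadraticextensionofscalars}) and the fact that a choice of $q_* \in Q(M)$ turns the $Q$-sort into an affine copy of the $V$-sort — reduces to the field data together with the linear-algebra/bilinear-form data on $V(A) + V(B)$ over $V(M)$. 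The linear disjointness clause of $\ind^{*}$ lets us freely prescribe the bilinear form values $\beta_V$ between $V(A)$ and $V(B)$ (via non-degeneracy and Witt's Lemma, Fact \ref{Witt}, exactly as in Case 3 of the proof of Proposition \ref{prop:orthogonal bnf}), so the $V$/$Q$-sort amalgamation is unobstructed; the field clause, i.e. $K(A) \ind^{Kd}_{K(M)} K(B)$ together with the independence theorem / stationarity of Kim-independence in the NSOP$_1$ field $K$, handles the field amalgamation. Matching these up over a Morley sequence and invoking symmetry of $\ind^{*}$ (which follows transparently from its definition, since both conjuncts are symmetric) shows no formula in $\mathrm{tp}(A/MB)$ Kim-divides.

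The main obstacle I anticipate is the interaction between the two sorts in the amalgamation: specifically, verifying that the field-type data and the vector/bilinear-form data can be chosen \emph{simultaneously} and consistently, because the values $\beta_V(v, w)$ for $v \in V(A)$, $w \in V(B)$ live in the field sort and one must ensure the prescribed bilinear-form values do not clash with the (Kim-)independent field extension being built — and, in the Witt-defect case, that the value of $\omega$ is forced correctly (here Lemma \ref{lem:Wittswitch} and Lemma \ref{lem:QG_ext} should give enough freedom, since over a model $Q(M) \neq \emptyset$). Concretely one wants: given $v \in \langle V(A)\rangle$, $w \in \langle V(B)\rangle$ with $\beta_V(v,w)$ already pinned down in $\mathbb{M}$, that this value lies in (and is consistent with independence of) $K(A)$ and $K(B)$ appropriately; linear disjointness over $V(M)$ and the fact that $\beta_V$ restricted to $V(M)$-complements can be prescribed arbitrarily by Witt's Lemma is what makes this work, but spelling out that the resulting amalgam is a genuine model of $T^0_Q$ (non-degeneracy preserved, regular $V$-action on $Q$ preserved) requires care. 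I would handle this by reducing, via Lemma \ref{lem:quadraticextensionofscalars} and the affine identification $Q \cong V$ over a chosen $q_* \in Q(M)$, to the purely orthogonal-space amalgamation already implicit in Proposition \ref{prop:orthogonal bnf}, layered on top of the field's independence theorem.
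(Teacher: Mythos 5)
Your plan matches the paper's in outline: the forward direction is indeed a direct application of Fact \ref{fact:Kimvecfacts}, and the reverse direction proceeds by amalgamation over an $M$-invariant Morley sequence $(B_i)_{i<\omega}$ starting with $B$, using the field's NSOP$_{1}$ structure, linear disjointness, and quantifier elimination. You also correctly name the central difficulty, namely that the values $\beta_V(v,w)$ for $v\in V(A)$, $w\in V(B)$ live in the field sort and must cohere with the Kim-independent field data.

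However, there are two gaps. The main one is that you flag the crucial step as one that requires care and then describe only a reduction you do not carry out. What the paper actually does is an explicit construction: having first chosen a field tuple $K_*$ with $K_*K(B_i)\equiv_{K(M)} K(A)K(B)$ for all $i$ (this uses Kim's Lemma for Kim-dividing in $\mathrm{Th}(K)$, which is already known to be NSOP$_{1}$, lifted to the full structure by the stable embeddedness of Corollary \ref{cor:quadraticstableembeddedness}), it builds an abstract $L_{Q}$-, respectively $L_{Q,\omega}$-, structure $N=(\tilde K, W, Q')$ by adjoining a fresh basis tuple $\overline a'$ to a basis of $\langle M, K_*, (B_i)_i\rangle$ and prescribing $\beta_V^N$, $\beta_Q^N$, and $\omega^N$ on basis vectors so that the interaction of $\overline a'$ with each $B_i$ copies that of $\overline a$ with $B_0=B$; one then checks $N$ is a model of $T^0$ and embeds it into $\mathbb M$ by quantifier elimination. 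Your appeal to the one-step back-and-forth of Proposition \ref{prop:orthogonal bnf} and to the independence theorem or stationarity in $K$ does not replace this: you need a simultaneous amalgam against an $\omega$-indexed indiscernible sequence, not a single extension step, and the field-side tool required is non-Kim-dividing along the induced Morley sequence via Kim's Lemma in $K$, not stationarity (which fails in general NSOP$_{1}$ theories). The second gap is the parenthetical suggesting a coheir or finitely satisfiable Morley sequence suffices: to show $A \ind^{Kd}_M B$ you must handle \emph{every} $M$-invariant Morley sequence starting with $B$, since Kim's Lemma in $T$ itself is unavailable, NSOP$_{1}$ of $T$ being precisely what is under proof. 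Finally, the remark invoking symmetry of $\ind^*$ is a red herring: it is true by inspection of the definition, but it contributes nothing to the implication from $A \ind^*_M B$ to $A \ind^{Kd}_M B$.
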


\begin{proof}
    Note that if $A \ind^{Kd}_{M} B$ then $A \ind^{*}_{M} B$ by Fact \ref{fact:Kimvecfacts}.  For the other direction, we follow the proof of \cite[Theorem 4.9]{kruckman2023new}.  Assume $A \ind^{*}_{M} B$ and let $(B_{i})_{i < \omega}$ be an $M$-invariant Morley sequence over $M$ with $B_{0} = B$.  Since $K(A) \ind^{Kd}_{K(M)} K(B)$, there is some $K_{*}$ such that $K_{*}K(B_{i}) \equiv_{K(M)} K(A)K(B)$ in $\mathrm{Th}(K(\mathbb{M}))$ for all $i < \omega$. By Corollary \ref{cor:quadraticstableembeddedness}, we get $K_{*}B_{i} \equiv_{M} K(A)B$ for all $i$. We put $\tilde{K} = K(\mathrm{acl}(K_{*}(B_{i})_{i < \omega}))$.  

    Fix a tuple $\overline{m} = (m_{i})_{i < \gamma}$ from $V(M)$ which enumerates a basis.  Then choose $\overline{a} = (a_{i})_{i < \delta}$ from $V(A)$ such that $\overline{m}\overline{a}$ enumerates a basis of $V(A)$ (in $V(A)$ or, equivalently, in $V(\mathbb{M})$).  Likewise, for each $i < \omega$, we fix a tuple $\overline{b}_{i} = (b_{i,j})_{j < \epsilon}$ that $\overline{m}\overline{b}_{i}$ enumerates a basis of $V(B_{i})$. By assumption, the collection of vectors $\overline{m}(\overline{b}_{i})_{i < \omega}$ is linearly independent. Then we let $(\tilde{K},\tilde{V}, \tilde{Q})$ denote the substructure of $\mathbb{M}$ generated by $M$, together with $\tilde{K}$ and $\overline{m}(\overline{b}_{i})_{i < \omega}$.  Then $\tilde{V}$ is the $\tilde{K}$-vector space with basis $\overline{m}(\overline{b}_{i})_{i < \omega}$ and $\tilde{Q}$ is a set of quadratic forms on $\tilde{V}$ with a regular action of $\tilde{V}$. Choose some $q_{0} \in \tilde{Q}$.  Then, for each $v \in \tilde{V}$, we let $q_{v} = v +_{Q} q_{0}$. Equivalently, the quadratic form $q_{v} = q_{0} + \lambda_{v}^{2}$, viewed as a function. Then, by regularity of the action, we have $\tilde{Q} = \{q_{v} : v \in \tilde{V}\}$. 

    Now we will construct an extension $N = (\tilde{K}, W,Q')$ as follows. First, we introduce a new sequence $\overline{a}' = (a'_{i})_{i < \delta}$ (not in $\mathbb{M}$) and let $W$ be the $\tilde{K}$-vector space with basis $\overline{m}\overline{a}'(\overline{b}_{i})_{i < \omega}$.  We interpret the vector space structure on $W$, including the coordinate functions and linear independence predicates, in the natural way.  Next, we let $Q' = \{q_{w} : w \in W\}$ where $q_{w}$ has been defined above for $w \in V$ and, for all $w \in W - V$, $q_{w}$ is a new element, not in $\mathbb{M}$.  To interpret the form on $N$, it suffices to define it on the basis of $W$.  For pairs of basis vectors contained in $\tilde{V}$, we define the form to equal the value of the given form on $\tilde{V}$.  Then, for all $i,i' < \delta$, $j < \gamma$, $k < \epsilon$, and $\ell < \omega$, we define 
    \begin{eqnarray*}
    \beta_{V}^{N}(a'_{i},a'_{i'}) &=& \beta_{V}^{\mathbb{M}}(a_{i},a_{i'}) \\
    \beta_{V}^{N}(a'_{i},m_{j}) &=& \beta_{V}^{\mathbb{M}}(a_{i},m_{j}) \\
    \beta_{V}^{N}(a'_{i},b_{\ell,k}) &=& \beta_{V}^{\mathbb{M}}(a_{i},b_{0,k}).
    \end{eqnarray*}
    Extending $\tilde{K}$-linearly, then, defines a bilinear form $\beta_{V}^{N} : W\times W \to \tilde{K}$.  

    To complete the construction of $N$, we have to interpret the action $+_{Q}$ of $W$ on $Q'$, the `evaluation function' $\beta_{Q}$ on $Q' \times W$, and possibly the Witt index, if present in the language of $T$.  To define the action, we just set, for all $v,w \in W$, $v +_{Q}^{N} q_{w} = q_{v+w}$, which is clearly regular and extends the action of $\tilde{V}$ on $\tilde{Q}$.  This, then, forces the interpretation $q_{v} -_{Q}^{N} q_{w} = v-w$. To define the interpretation of the evaluation function, we will first define $\beta_{Q}^{N}(q_{0},-)$ as a quadratic form on a basis of $W$.  For $i < \gamma$, $\ell < \omega$, and $j < \epsilon$, we define 
    \begin{eqnarray*}
        \beta^{N}_{Q}(q_{0},m_{i}) &=& \beta^{\mathbb{M}}_{Q}(q_{0},m_{i}) \\
        \beta^{N}(q_{0},b_{\ell,j}) &=& \beta^{\mathbb{M}}_{Q}(q_{0},b_{0,j}).
    \end{eqnarray*}
    This ensures that $\beta^{N}_{Q}(q_{0},-)$ agrees with $\beta^{\mathbb{M}}_{Q}(q_{0},-)$ on $\tilde{V}$.  Then, for each $i < \delta$, we set 
    $$
    \beta^{N}_{Q}(q_{0},a'_{i}) = \beta^{\mathbb{M}}_{Q}(q_{0},a_{i}).
    $$
    This function on a basis of $W$ extends uniquely to a quadratic form $\beta^{N}_{Q}(q_{0},-)$ on all of $W$.  Next, given any $v,w \in W$, we define 
    $$
    \beta^{N}_{Q}(q_{w},v) = \beta^{N}_{Q}(q_{0},v) + (\beta^{N}_{V}(w,v))^{2}.
    $$
    This asserts that, as a function, $q_{w} = q_{0} + \lambda^{2}_{w}$, as desired. By unraveling definitions, we see that, for $q \in \tilde{Q}$ and $v \in \tilde{V}$, we have $\beta^{N}_{Q}(q,v) = \beta^{\mathbb{M}}(q,v)$. Finally, if the Witt defect is present in the language of $T$, we need to interpret the Witt defect $\omega^{N} : Q' \to \{0,1\}$ on $N$. Suppose that, in $\mathbb{M}$, $\omega(q_{0}) = i_{*} \in \{0,1\}$. Then we set, for all $w \in W$, 
    $$
    \omega^{N}(q_{w}) = \left\{
        \begin{matrix}
            i_{*} & \text{ if } q_{0}(w) \in \wp(\tilde{K}) \\
            1 - i_{*} & \text{ if } q_{0}(w) \not\in \wp(\tilde{K}). 
        \end{matrix}
    \right.
    $$
    This completes the construction of $N$.  By quantifier elimination, we may embed $N$ into $\mathbb{M}$ over the substructure $(\tilde{K}, \tilde{V}, \tilde{Q})$.  We identify $N$ with its image in $\mathbb{M}$.  

    Let $A'$ be the substructure of $N$ generated by $M$, $K_{*}$, and $\overline{a}'$.  By construction and quantifier elimination, we have $A'B_{i} \equiv_{M} AB$ for all $i < \omega$. As $(B_{i})_{i < \omega}$ was chosen to be an arbitrary $M$-invariant Morley sequence over $M$ starting with $B$, this shows $A \ind^{Kd}_{M} B$.
\end{proof}

\begin{cor} \label{cor:NSOP1}
    The theory $T$ is NSOP$_{1}$.
\end{cor}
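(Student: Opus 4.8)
The plan is to deduce Corollary~\ref{cor:NSOP1} from Proposition~\ref{prop: indepchar} together with the symmetry criterion recorded in Fact~\ref{fact: symmetry}: by that fact it suffices to show that $\ind^{Kd}$ is symmetric over every model $M \models T$, and Proposition~\ref{prop: indepchar} reduces this (on algebraically closed sets) to the symmetry of $\ind^{*}$.

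The heart of the matter is that $\ind^{*}$ is manifestly symmetric. Its definition is the conjunction of two conditions, and the condition $\langle V(\mathrm{acl}(AM))\rangle \cap \langle V(\mathrm{acl}(BM))\rangle = \langle V(M)\rangle$ is literally unchanged under interchanging $A$ and $B$. For the remaining condition $K(\mathrm{acl}(AM)) \ind^{Kd}_{K(M)} K(\mathrm{acl}(BM))$, I would use that the theory of the underlying field — whether $\mathrm{ACF}_{2}$, a completion of $\mathrm{PSF}_{2}$, or the theory of a perfect PAC field with absolute Galois group $\mathbb{Z}_{2}$ — is simple (in the last case because $\mathbb{Z}_{2}$ is a bounded profinite group, so one is looking at a bounded perfect PAC field). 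Over the model $K(M)$, Kim-dividing then agrees with forking, and forking-independence in a simple theory is symmetric over any base; so this condition too is symmetric in $A$ and $B$. Hence $A \ind^{*}_{M} B$ if and only if $B \ind^{*}_{M} A$, and therefore, by Proposition~\ref{prop: indepchar}, $\ind^{Kd}$ is symmetric among algebraically closed subsets of $M$ that contain $M$.

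To obtain the symmetry statement of Fact~\ref{fact: symmetry} for arbitrary tuples $a,b$, I would then invoke the standard fact that Kim-independence over a model is insensitive to algebraic closure, i.e. $a \ind^{Kd}_{M} b$ if and only if $\mathrm{acl}(Ma) \ind^{Kd}_{M} \mathrm{acl}(Mb)$: one containment is the purely formal monotonicity of ``not Kim-dividing'' under enlarging or shrinking the parameter set, and the other follows from the ``finitely many conjugates'' argument combined with the indiscernibility of a witnessing Morley sequence. With that in hand, symmetry of $\ind^{Kd}$ over algebraically closed sets upgrades to symmetry over $M$ in general, and Fact~\ref{fact: symmetry} yields that $T$ is NSOP$_{1}$.

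All of the genuine content has already been supplied by Proposition~\ref{prop: indepchar}, via the explicit amalgam $N$ constructed there; the corollary itself is a matter of unwinding the definition of $\ind^{*}$ and observing its evident symmetry, the simplicity of the field theory, and the standard reduction to algebraically closed sets. The last of these is the only place that requires any care, and even that is routine — it is included only because Fact~\ref{fact: symmetry} is phrased for arbitrary parameters while Proposition~\ref{prop: indepchar} is stated for algebraically closed ones.
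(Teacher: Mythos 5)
Your proposal is correct and takes essentially the same route as the paper: deduce symmetry of $\ind^{Kd}$ over models from Proposition~\ref{prop: indepchar} together with the manifest symmetry of $\ind^{*}$, and then conclude by Fact~\ref{fact: symmetry}. The paper compresses this to a single sentence, so you are simply spelling out what it leaves implicit — the symmetry of each conjunct in the definition of $\ind^{*}$ (using simplicity, equivalently NSOP$_{1}$, of the relevant field theories) and the routine reduction from arbitrary tuples to algebraically closed sets containing $M$ via the finitely-many-conjugates argument.
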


\begin{proof}
    Proposition \ref{prop: indepchar} shows in particular that Kim-dividing is symmetric over models in $T$, hence $T$ is NSOP$_{1}$ by Fact \ref{fact: symmetry}.  
\end{proof}

\begin{cor}
    Suppose $T'$ is either $T_{O}$ or a pseudo-finite completion of $T^{0}_{O}$.  Then $T'$ is NSOP$_{1}$. 
\end{cor}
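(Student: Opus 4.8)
The plan is to reduce everything to Corollary~\ref{cor:NSOP1} by exploiting the bi-interpretability, already used at the start of this section, between orthogonal spaces and quadratic geometries, together with the fact that NSOP$_{1}$ is preserved under passing to an interpretable structure: if $N$ is interpretable in $M$ and $\mathrm{Th}(M)$ is NSOP$_{1}$, then so is $\mathrm{Th}(N)$, since an SOP$_{1}$-configuration of instances of a formula in $\mathrm{Th}(N)$ pulls back through the interpretation to one in $\mathrm{Th}(M)$. So it suffices, in each case, to exhibit a model of $T'$ as a definable reduct of a quadratic geometry whose theory is already known to be NSOP$_{1}$.

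First I would dispose of $T' = T_{O}$. We have already observed that $T_{O}$ is definable in $T_{Q,q_{*}}$, by interpreting $q$ as $\beta_{Q}(q_{*},\cdot)$. Since $T_{Q,q_{*}}$ is just $T_{Q}$ with one constant added to the $Q$-sort, and adding a constant preserves NSOP$_{1}$, the NSOP$_{1}$-ness of $T_{Q}$ from Corollary~\ref{cor:NSOP1} gives that $T_{Q,q_{*}}$, and hence $T_{O}$, is NSOP$_{1}$.

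For a pseudo-finite completion $T'$ of $T^{0}_{O}$ I would use the classification in Proposition~\ref{prop:orthclass}. Given $M = (V,K,q) \models T'$, expand $M$ to a quadratic geometry (with Witt defect, if desired) $M^{+} = (V,K,Q,\omega)$: take $Q$ to be a copy of $V$ carrying the torsor structure $q_{w} +_{Q} w' = q_{w+w'}$, $q_{w} -_{Q} q_{w'} = w - w'$, with evaluation $\beta_{Q}(q_{w},\cdot) = q + \beta(w,\cdot)^{2}$, and with $\omega(q_{w})$ declared by the clause in the axioms of $T^{0}_{Q,\omega}$ (which is consistent here because the relevant fields are perfect with a unique quadratic Galois extension, so $\wp(K)$ has index $2$ in $(K,+)$, and in the finite‑dimensional case $\omega$ is pinned down by Remark~\ref{rem: Wittdef}). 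Then $M$ is a definable reduct of $M^{+}$, and since this expansion is first‑order uniform it carries an ultraproduct of finite orthogonal spaces to an ultraproduct of finite quadratic geometries; thus $M^{+}$ is a pseudo‑finite quadratic geometry with $Q(M^{+}) \neq \emptyset$. When $K$ is pseudo‑finite and $V$ is infinite‑dimensional, $\mathrm{Th}(M^{+})$ is one of the pseudo‑finite completions of $T^{0}_{Q}$ or $T^{0}_{Q,\omega}$ covered by Corollary~\ref{cor:NSOP1}, so it is NSOP$_{1}$, and hence so is $T'$. The two remaining cases of Proposition~\ref{prop:orthclass} are degenerate and handled directly: if $V$ is finite‑dimensional over a pseudo‑finite field, then $M$ is interpretable in $K \models \mathrm{PSF}_{2}$, which is simple \cite{hrushovski1991pseudo} hence NSOP$_{1}$; and if $K$ is finite (with $V$ infinite‑dimensional), then $M$ is an $\aleph_{0}$‑categorical smoothly approximable structure, hence simple by the structure theory of such structures \cite{cherlin2003finite}, hence NSOP$_{1}$. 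In both cases interpretability transfers NSOP$_{1}$ to $T'$.

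The only point that genuinely needs care is verifying that the expansion $M \mapsto M^{+}$ is a definable (hence ultraproduct‑stable) expansion with $\omega$ well defined, so that pseudo‑finiteness is preserved and $\mathrm{Th}(M^{+})$ really falls within the scope of Corollary~\ref{cor:NSOP1}; once that is in hand the statement is a routine application of the preservation of NSOP$_{1}$ under interpretation.
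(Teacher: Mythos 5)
Your proof is correct and follows the same route as the paper's (terse, one-line) argument: transfer NSOP$_{1}$ from the quadratic geometries treated in Corollary~\ref{cor:NSOP1} to orthogonal spaces via interpretability (equivalently, by exhibiting each orthogonal space as a definable reduct of a quadratic geometry in the same pseudo-finiteness class). You additionally spell out the degenerate cases of Proposition~\ref{prop:orthclass} --- $V$ finite-dimensional over a pseudo-finite field (interpretable in $K\models\mathrm{PSF}_2$, hence simple) and $K$ finite with $V$ infinite-dimensional (smoothly approximable, hence simple) --- which the paper's proof leaves implicit.
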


\begin{proof}
    Note that each possibility for $T'$ is interpretable in one of the corresponding possibilities for $T$, hence $T'$ is NSOP$_{1}$, by Corollary \ref{cor:NSOP1}.
\end{proof}

Finally, we also deduce NSOP$_{1}$ for the Granger example over pseudo-finite fields in characteristic not $2$:
\begin{cor}
    The theories $_{S}T^{PSF_{\neq 2}}_{\infty}$ and $_{A}T^{PSF_{\neq 2}}_{\infty}$ are NSOP$_{1}$. 
\end{cor}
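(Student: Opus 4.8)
The plan is to run, for the two-sorted theory of a vector space with a non-degenerate symmetric (resp.\ alternating) bilinear form over a pseudo-finite field of characteristic $\neq 2$, exactly the argument used to prove Proposition \ref{prop: indepchar}; this is essentially the situation treated in \cite[Section 4]{kruckman2023new}, with the real-closed base field replaced by a pseudo-finite one and the form no longer assumed positive definite. Fix a completion $T_{0}$ of $\mathrm{PSF}_{\neq 2}$ and let $\mathbb{M}$ be a monster model of $_{S}T^{T_{0}}_{\infty}$ (the alternating case being identical). The three ingredients needed from the general argument are all in place: the field is simple \cite{hrushovski1991pseudo}, hence NSOP$_{1}$; the theory $_{S}T^{T_{0}}_{\infty}$ eliminates quantifiers down to the field (Corollary \ref{cor:grangerqe}); and the field sort is stably embedded (Corollary \ref{cor:grangerstableembeddedness}).

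Following Proposition \ref{prop: indepchar}, for $M \models {}_{S}T^{T_{0}}_{\infty}$ and parameter sets $A, B$ we set
$$
A \ind^{*}_{M} B \iff K(\mathrm{acl}(AM)) \ind^{Kd}_{K(M)} K(\mathrm{acl}(BM)) \ \text{and}\ \langle V(\mathrm{acl}(AM)) \rangle \cap \langle V(\mathrm{acl}(BM)) \rangle = \langle V(M) \rangle,
$$
and we show that for $A = \mathrm{acl}(AM)$, $B = \mathrm{acl}(BM)$ one has $A \ind^{Kd}_{M} B$ if and only if $A \ind^{*}_{M} B$. The forward implication is the analogue of Fact \ref{fact:Kimvecfacts}, whose proofs in \cite[Lemmas 4.7, 4.8]{kruckman2023new} go through verbatim. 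For the converse, given $A \ind^{*}_{M} B$ and an $M$-invariant Morley sequence $(B_{i})_{i<\omega}$ over $M$ with $B_{0}=B$, one uses simplicity of the field to produce $K_{*}$ with $K_{*}K(B_{i}) \equiv_{K(M)} K(A)K(B)$ for all $i$, upgrades this to $K_{*}B_{i} \equiv_{M} K(A)B$ via stable embeddedness, puts $\tilde{K} = K(\mathrm{acl}(K_{*}(B_{i})_{i<\omega}))$, and then amalgamates: choosing bases $\overline{m}$ of $V(M)$, $\overline{m}\,\overline{a}$ of $V(A)$ and $\overline{m}\,\overline{b}_{i}$ of $V(B_{i})$, one introduces a fresh copy $\overline{a}'$ of $\overline{a}$, lets $W$ be the $\tilde{K}$-span of $\overline{m}\,\overline{a}'(\overline{b}_{i})_{i<\omega}$, keeps the given bilinear form on pairs of basis vectors among $\overline{m}(\overline{b}_{i})_{i<\omega}$, and sets
$$
\beta^{N}(a'_{i},a'_{i'}) = \beta^{\mathbb{M}}(a_{i},a_{i'}), \qquad \beta^{N}(a'_{i},m_{k}) = \beta^{\mathbb{M}}(a_{i},m_{k}), \qquad \beta^{N}(a'_{i},b_{\ell,k}) = \beta^{\mathbb{M}}(a_{i},b_{0,k}).
$$
One checks that the resulting $N = (\tilde{K},W)$ is a substructure of a model of $_{S}T^{T_{0}}_{\infty}$, embeds it into $\mathbb{M}$ over the common substructure by Corollary \ref{cor:grangerqe}, and reads off from the construction together with quantifier elimination that the image $A'$ of the substructure generated by $M$, $K_{*}$ and $\overline{a}'$ satisfies $A'B_{i} \equiv_{M} AB$ for every $i$; since $(B_{i})_{i<\omega}$ was an arbitrary $M$-invariant Morley sequence over $M$ starting with $B$, this gives $A \ind^{Kd}_{M} B$.

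Because $\ind^{*}$ is manifestly symmetric, the equality $\ind^{Kd} = \ind^{*}$ shows that Kim-dividing is symmetric over models of $_{S}T^{T_{0}}_{\infty}$, so this theory is NSOP$_{1}$ by Fact \ref{fact: symmetry}; as $T_{0}$ was an arbitrary completion of $\mathrm{PSF}_{\neq 2}$ and the alternating theories are handled the same way, $_{S}T^{PSF_{\neq 2}}_{\infty}$ and $_{A}T^{PSF_{\neq 2}}_{\infty}$ are NSOP$_{1}$. The one point requiring care is the verification that the amalgam $N$ really is a substructure of a model---that is, that the form above extends to a well-defined symmetric (resp.\ alternating) bilinear form and that $N$ satisfies the universal theory, so that the quantifier-elimination embedding is available---but this is precisely the step carried out in the proof of Proposition \ref{prop: indepchar}, with the $Q$-sort and the Witt-defect bookkeeping simply omitted.
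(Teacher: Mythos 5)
Your proposal is correct and takes exactly the approach of the paper: the paper's own proof simply says to repeat the argument of Proposition \ref{prop: indepchar}, dropping the $Q$-sort bookkeeping and substituting Corollaries \ref{cor:grangerqe} and \ref{cor:grangerstableembeddedness} for the quadratic-geometry quantifier elimination and stable embeddedness. You have merely unpacked the same argument in more detail, and the one caveat you flag (that the amalgam $N$ is a substructure of a model, so that the QE embedding applies) is exactly the verification implicitly invoked by the paper.
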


\begin{proof}
    The proof is identical to the proof for Proposition \ref{prop: indepchar}, simply skipping the part of the proof that involved the $Q$ sort and using the quantifier elimination results in Corollaries \ref{cor:grangerqe} and \ref{cor:grangerstableembeddedness}.  
\end{proof}

\bibliographystyle{alpha}
\bibliography{biblio.bib}{}

\end{document}